\documentclass[jmp,12pt]{revtex4-1}
\usepackage{amsmath}
\usepackage{amssymb}
\usepackage{amsthm}
\usepackage{graphicx}
\usepackage{subfig}
\usepackage{dcolumn}% Align table columns on decimal point
\usepackage{bm}% bold math

\newtheorem{thm}{Theorem}
\newtheorem{lem}{Lemma}

\newcommand{\beq}{\begin{equation}}
\newcommand{\eeq}{\end{equation}}

\newcommand{\ba}{\begin{array}}
\newcommand{\ea}{\end{array}}

\newcommand{\bea}{\begin{eqnarray}}
\newcommand{\eea}{\end{eqnarray}}

\newcommand{\bc}{\begin{center}}
\newcommand{\ec}{\end{center}}

\newcommand{\ds}{\displaystyle}

\newcommand{\bt}{\begin{tabular}}
\newcommand{\et}{\end{tabular}}

\newcommand{\bi}{\begin{itemize}}
\newcommand{\ei}{\end{itemize}}

\newcommand{\bd}{\begin{description}}
\newcommand{\ed}{\end{description}}

\newcommand{\p}{\partial}

\newcommand{\cf}{{\it cf.}~}

\newcommand{\norm}[1]{\left|\left|#1\right|\right|_{2}}

\newcommand{\Lnpnorm}{L_{2}\left(S_{T}\right)}

\newcommand{\Hpnorm}{H_{1}(S_{T})}
\newcommand{\Hpsnorm}{H_{s}\left(S_{T}\right)}

\newcommand{\opHnormo}[1]{\left|\left|#1\right|\right|_{\Hpnorm}}
\newcommand{\opHnorm}[1]{\left|\left|#1\right|\right|_{\Hpsnorm}}

\newcommand{\gnorm}[1]{\left| \left| #1 \right| \right|}

\begin{document}
\title{On Nonlocal Gross-Pitaevskii Equations with Periodic Potentials}% Force line breaks with \\
\date{\today}
\author{Christopher W. Curtis}
\affiliation{Department of Applied Mathematics, University of Colorado, Boulder, CO 80309, USA.}
\email{christopher.w.curtis@colorado.edu}

\begin{abstract}
The Gross-Pitaevskii equation is a widely used model in physics, in particular in the context of Bose-Einstein condensates.  However, it only takes into account local interactions between particles.  This paper demonstrates the validity of using a nonlocal formulation as a generalization of the local model.  In particular, the paper demonstrates that the solution of the nonlocal model approaches in norm the solution of the local model as the nonlocal model approaches the local model.  The nonlocality and potential used for the Gross-Pitaevskii equation are quite general, thus this paper shows that one can easily add nonlocal effects to interesting classes of Bose-Einstein condensate models.  Based on a particular choice of potential for the nonlocal Gross-Pitaevskii equation, we establish the orbital stability of a class of parameter-dependent solutions to the nonlocal problem for certain parameter regimes.  Numerical results corroborate the analytical stability results and lead to predictions about the stability of the class of solutions for parameter values outside of the purview of the theory established in this paper. 
\end{abstract}
\keywords{Gross-Pitaevskii Equations, Nonlocal Models, Stability}
\maketitle
\section{Introduction}
The last 15 years has a seen a rapid growth in interest concerning the modeling of Bose-Einstein condensates.  The body of literature concerning this subject is too vast to consider here, but a simplified description of the field would include the study of the Gross-Pitaevskii equation
\beq
i\p_{t}\psi = -\frac{1}{2}\p^{2}_{x}\psi +\alpha |\psi|^{2}\psi + V(x)\psi,
\label{locgp}
\eeq
where $\alpha = \pm 1$, with $+1$ corresponding to repulsive interactions between particles in the condensate, and $-1$ corresponding to attractive interactions.  The function $\psi$ represents an approximation to the wave function used to describe the probability density for the location of particles in the condensate.  

The validity of this equation as an approximation to the many-particle formulation of the problem has been established in \cite{lieb}.  However, an assumption of a pairwise $\delta$-function interaction among particles is used to derive \eqref{locgp}.  This clearly cannot capture all of the  physics in the problem since each particle in the condensate exerts forces that act at a distance.  Thus the next order of approximation to the many-particle formulation would be to include a more general interaction potential simulating nonlocal interactions between particles.  This is done in \cite{decon} by studying the modified one-dimensional Gross-Pitaevskii equation
\beq
i\p_{t}\psi = -\frac{1}{2}\p^{2}_{x}\psi +\alpha\psi(x)\int_{-\infty}^{\infty}R(x-y;\epsilon)|\psi|^{2}(y)dy + V(x)\psi,
\label{nlocgp}
\eeq
where $\epsilon > 0$, and $R(x;\epsilon) = \frac{1}{\epsilon}\zeta(\frac{x}{\epsilon})$, with $\zeta(x)$ being a positive, even function such that
\beq
\lim_{\epsilon \rightarrow 0^{+}} R(x;\epsilon) = \delta(x) \nonumber
\eeq
in the sense of distributions.  In \cite{decon}, $\epsilon$ is called the nonlocality parameter.  The authors of \cite{decon} assume that the condensate is trapped in both a harmonic confining potential and an external standing-wave potential.  While in \cite{decon} a three-dimensional version of \eqref{nlocgp} is derived, the presence of the standing-wave potential allows the reduction to a one-dimensional model (\cf \cite{olshan}).  

Nonlocal models like \eqref{nlocgp} are also called Hartree-Fock equations.  These have been extensively studied in the case that $\zeta(x) = 1/|x|$, {\it i.e.} in modeling Coulombic interactions between particles (\cf \cite{gini}, \cite{hart}).  Recent literature on the formation of dipolar condensates has introduced nonsingular nonlocalities characterized by cubic decay (\cf \cite{sinha}, \cite{kevre}).  These nonlocalities with cubic decay fit into the class studied in this paper.  Other models with a varying nonlocality parameter have appeared in the optics literature \cite{trillo}.  The analysis of the well-posedness and convergence of nonlocal, nonlinear Schr\"{o}dinger type models to local ones can be found in \cite{cao1} and \cite{cao2}, though the models examined in those papers are different from those studied in this paper. 

The authors of \cite{decon}, working with the potential $V(x) = V_{0}\sin^{2}(kx)$, derived the traveling-wave solutions
\beq
\psi(x,t) = r_{sol}(x)e^{i\theta(x)-i\omega t},
\label{sol}
\eeq
where
\begin{eqnarray}
r_{sol}^{2}(x)= & B - \ds{\frac{V_{0}}{\alpha \beta(k;\epsilon)}}\sin^{2}(kx), \nonumber \\
\tan(\theta(x))= & \ds{\sqrt{1-\ds{\frac{V_{0}}{\alpha B \beta(k;\epsilon)}}}}\tan(kx), \nonumber \\
\omega(k)= & \ds{\frac{V_{0}+k^2}{2}} + \alpha B - \ds{\frac{V_{0}}{2\beta(k;\epsilon)}}, \nonumber \\
\beta(k;\epsilon)= & \ds{\int_{-\infty}^{\infty}} R(x;\epsilon) \cos(2kx) dx, \nonumber
\end{eqnarray}
with $B$ a constant called the offset size.  Defining $A = \frac{-V_{0}}{\alpha \beta(k;\epsilon)}$, the traveling-wave solution can be rewritten as
\beq
\psi(x,t) = \left(\sqrt{B}\cos(kx) + i\sqrt{B+A}\sin(kx) \right)e^{-i\omega t}, \nonumber
\eeq   
which shows the spatial component of \eqref{sol} is periodic with period $2\pi/k$.  The coefficients appearing in $\psi(x,t)$ must satisfy the restrictions 
\[
 B \geq \max\{-A,0\}, ~ \alpha = \pm 1, ~   \beta(k;\epsilon) \neq 0.
\]
 
Setting $\alpha$ and $k$ equal to one, and taking $B=1$, which in \cite{decon} is described as large, and $V_{0}=-1$, the authors of \cite{decon} study the stability of \eqref{sol} by numerical simulations using $\zeta(x) = e^{-x^2}$ and $V(x)=V_{0}\sin^{2}(kx)$.  The authors report results which numerically demonstrate that for the local case, {\it i.e.} when $\epsilon = 0$, \eqref{sol} is stable with respect to perturbations due to roundoff error in the numerical simulation.  However, their results also suggest that \eqref{sol} is unstable when the nonlocality parameter $\epsilon$ is positive, and that the instability emerges at a fixed time in their simulations, independent of the value of $\epsilon$.  The authors of \cite{decon} also study the effect of changing the convolution kernel, and they report that the results are similar to those for the case $\zeta(x)=e^{-x^{2}}$.  

It is conjectured in \cite{decon} that a beyond-all-orders phenomena may be responsible for the behavior exhibited in their numerics.  As pointed out in \cite{decon}, if the behavior exhibited in their numerics is accurate and truly independent of the choice of interaction potential, then \eqref{nlocgp} cannot be viewed as a valid generalization of \eqref{locgp}.  That is to say, no matter how small one makes the nonlocal interaction term, the results of \cite{decon} seem to imply that one cannot approach the local behavior.  This is described as a lack of {\it asymptotic equivalence of stability} (AES) in \cite{decon}.     

The purpose of this paper is to address both the issue of whether or not \eqref{nlocgp} is AES to \eqref{locgp} and under what conditions \eqref{sol} is a stable solution of Equation \eqref{nlocgp}.  To do this, we first fix some notation and introduce the spaces in which we work.  Let $S_{T}$ denote the circle of circumference $T$.  Introduce the space $L^{2}(S_{T})$ which is the completion of the continuous $T$-periodic functions in the norm
\beq
\gnorm{f}_{L^{2}(S_{T})} = \left(\int_{S_{T}}|f(x)|^{2}dx \right)^{1/2} = \left(\int^{T/2}_{-T/2}|f(x)|^{2}dx\right)^{1/2}. \nonumber
\eeq  
In practice the integral $\int_{S_{T}}|f(x)|^{2}dx$ could be evaluated over any interval of width $T$ since $f$ is a $T$-periodic function.  Note, throughout the remainder of the text $\gnorm{\cdot}_{L^{2}(S_{T})}$ is abbreviated by $\norm{\cdot}$.  We  define the norm, denoted as $\gnorm{\cdot}_{2,v}$, of the product space $L^{2}_{2}(S_{T}) = L^{2}(S_{T}) \times L^{2}(S_{T})$, via 
\[
\gnorm{\left(\ba{c}f \\ g \ea\right)}^{2}_{2,v} = \gnorm{f}^{2}_{2}  + \gnorm{g}^{2}_{2}. 
\]
For operators $\mathcal{B}$ that map $L^{2}(S_{T})$ to itself, we denote the norm of $\mathcal{B}$ via
\[
\norm{\mathcal{B}} = \sup_{\norm{f}=1} \norm{\mathcal{B}f}.
\]
The norms $\gnorm{\mathcal{B}}_{2,v}$ are defined in an identical way.  The Sobolev spaces $H_{s}(S_{T})$ are defined as follows:
\[
H_{s}\left(S_{T}\right) = \left\{f \in L^{2}(S_{T}): \sum_{j=-\infty}^{\infty}\left<j\right>^{s}|\hat{f}_{j}|^{2} <\infty \right\}, \nonumber
\]
where 
\[
\left<j\right> = \left(1+\frac{4\pi^2j^{2}}{T^{2}}\right),
\]
and the terms $\hat{f}_{n}$ come from the Fourier series of $f(x)$, which is 
\beq
f(x) = \sum_{j=-\infty}^{\infty} \hat{f}_{j} e_{j}(x), \nonumber
\eeq
where 
\beq
e_{j}(x) = 
\left\{ 
\ba{cc}
\ds{\frac{1}{T}} & j=0 \\
\\
\ds{\frac{1}{\sqrt{T}}}e^{-\ds{2\pi i j x/T}} & j \neq 0
\ea
\right. 
\label{fs}
\eeq
and 
\beq
\hat{f}_{j} = \int_{-T/2}^{T/2} f(x) e^{*}_{j}(x) dx, \nonumber
\eeq
where $e^{*}_{j}$ denotes the complex conjugate of $e_{j}$.  The product space $H_{s}(S_{T})\times H_{s}(S_{T})$ is denoted by $H^{2}_{s}(S_{T})$.  Finally, define the Fourier transform of $h(x) \in L^{2}(\mathbb{R})$, say $\hat{h}(\tilde{s})$, by
\beq
\hat{h}(\tilde{s}) = \int_{-\infty}^{\infty} e^{-i\tilde{s}x}h(x) dx. \nonumber
\eeq

To address the issue of AES, we first prove the local-well posedness, in $H_{s}(S_{T})$ for $s>1/2$, and global-well posedness of \eqref{nlocgp} over the space $H_{1}(S_{T})$ based on the following assumptions.
\begin{itemize}
\item[$H1$:] The potential $V(x)$ is a smooth, $T$-periodic function,
\item[$H2$:] $\zeta(x) \geq 0$,
\item[$H3$:] $\zeta(x) \in L^{1}(\mathbb{R})$ with $\gnorm{\zeta}_{L^{1}(\mathbb{R})}=\int_{\mathbb{R}}\zeta(x)dx = 1$, 
\item[$H4$:] $x\zeta(x) \in L^{1}(\mathbb{R})$, and 
\item[$H5$:] $|\hat{\zeta}(\tilde{s})| \leq (1+|\tilde{s}|)^{-1/2-\tilde{\epsilon}}$, where $\tilde{\epsilon}>0$.  
\end{itemize}
Note that the maximum of $\hat{\zeta}$ could be chosen larger than one without affecting our results.  We make this choice in order for a cleaner presentation.  Using the local and global-well posedness results, we prove     
\begin{thm}
\label{aes}
Let $\alpha =1$.  Assuming the hypotheses $H1-H5$, choose constant $C>0$ such that $\opHnormo{\psi_{0,\epsilon}}\leq C$ for $\epsilon \geq 0$ where $\psi_{0,\epsilon}(x)$ is an initial condition for \eqref{nlocgp} and $\psi_{0,0}(x)$ is an initial condition for \eqref{locgp}.  Let $\psi_{0,\epsilon} \rightarrow \psi_{0,0}$ in the $H_{1}(S_{T})$ norm as $\epsilon \rightarrow 0^{+}$.  Let $\psi(x,t)$ and $\psi_{\epsilon}(x,t)$ be the unique T-periodic solutions to \eqref{locgp} and \eqref{nlocgp} respectively for the given initial conditions.  Then there exists a constant $C_{u}>0$ and a function $\varrho(\tilde{T};\epsilon,\epsilon^{'})$, where $\lim_{\epsilon \rightarrow \epsilon^{'}}\varrho(\tilde{T};\epsilon,\epsilon^{'}) = 0$ for $\epsilon,\epsilon^{'}\geq 0$, such that, for any finite $\tilde{T}>0$, we have the bound
\[
\gnorm{\psi_{\epsilon}(\cdot,t) - \psi(\cdot,t)}_{L^{\infty}(S_{T})} \leq \left(\gnorm{\psi_{0,\epsilon}- \psi_{0,0}}_{H_{1}(S_{T})} + 3C_{u}\tilde{T}\varrho(\tilde{T};\epsilon,0)\right)e^{3C^{2}_{u}\tilde{T}}
\]
for $t\in[0,\tilde{T}]$.
\end{thm}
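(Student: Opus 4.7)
The plan is to set $w(x,t) = \psi_\epsilon(x,t) - \psi(x,t)$, derive its evolution equation, and perform an $H_{1}$ energy estimate whose only non-dissipated forcing term measures how well $R(\cdot;\epsilon)$ approximates the identity. Subtracting \eqref{locgp} from \eqref{nlocgp} with $\alpha=1$ gives $i\p_{t}w = -\tfrac{1}{2}\p^{2}_{x}w + V(x)w + \mathcal{N}$. Using the algebraic identity $|\psi_\epsilon|^{2}-|\psi|^{2} = w\,\overline{\psi_\epsilon}+\psi\,\overline{w}$, I would split the nonlinearity as
\beq
\mathcal{N} = w\bigl(R(\cdot;\epsilon)\ast|\psi_\epsilon|^{2}\bigr) + \psi\bigl(R(\cdot;\epsilon)\ast(|\psi_\epsilon|^{2}-|\psi|^{2})\bigr) + \psi\bigl(R(\cdot;\epsilon)\ast|\psi|^{2} - |\psi|^{2}\bigr). \nonumber
\eeq
The first two pieces are ``Lipschitz'' in $w$, while the third is a ``consistency error'' independent of $w$.

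The global well-posedness result in $H_{1}(S_{T})$ established earlier in the paper, together with the uniform bound $\opHnormo{\psi_{0,\epsilon}}\leq C$, furnishes a constant $C_{u}=C_{u}(C,\tilde{T},V)$ such that $\opHnormo{\psi_\epsilon(\cdot,t)},\ \opHnormo{\psi(\cdot,t)}\leq C_{u}$ for $t\in[0,\tilde{T}]$ and $\epsilon\geq 0$; the 1D periodic embedding $H_{1}(S_{T})\hookrightarrow L^{\infty}(S_{T})$ then gives uniform pointwise bounds on both solutions. Differentiating $\opHnormo{w}^{2}$ and inserting the equation, the self-adjoint operator $-\tfrac{1}{2}\p_{x}^{2}+V$ contributes nothing to the real part, apart from a commutator between $V$ and $\p_{x}$ that is absorbed using $\|V'\|_{L^{\infty}}$. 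By Young's inequality for convolutions and H3, $\opHnormo{R(\cdot;\epsilon)\ast f}\leq\opHnormo{f}$, and combining with the $C_{u}$ bounds, the two Lipschitz pieces contribute at most $3C_{u}^{2}\opHnormo{w}^{2}$ while the consistency piece contributes at most $3C_{u}\varrho(\tilde{T};\epsilon,0)\opHnormo{w}$ with
\beq
\varrho(\tilde{T};\epsilon,\epsilon') = \sup_{t\in[0,\tilde{T}]}\opHnormo{\bigl(R(\cdot;\epsilon) - R(\cdot;\epsilon')\bigr)\ast|\psi(\cdot,t)|^{2}}, \nonumber
\eeq
where $R(\cdot;0)\ast f$ is interpreted as $f$. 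This produces $\tfrac{d}{dt}\opHnormo{w}\leq 3C_{u}^{2}\opHnormo{w}+3C_{u}\varrho$; Gronwall's lemma yields $\opHnormo{w(\cdot,t)}\leq(\opHnormo{\psi_{0,\epsilon}-\psi_{0,0}}+3C_{u}\tilde{T}\varrho)e^{3C_{u}^{2}\tilde{T}}$, and the Sobolev embedding converts this into the desired $L^{\infty}(S_{T})$ bound.

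The main obstacle is establishing $\varrho(\tilde{T};\epsilon,0)\to 0$ as $\epsilon\to 0^{+}$. On Fourier coefficients the convolution acts by $\widehat{R(\cdot;\epsilon)\ast f}_{j}=\hat{f}_{j}\,\hat\zeta(2\pi j\epsilon/T)$, so for fixed $f\in H_{1}(S_{T})$ the pointwise limit $\hat\zeta(2\pi j\epsilon/T)\to 1$ together with the uniform bound $|\hat\zeta|\leq 1$ from H3 and H5, and the decay $\hat\zeta(\tilde{s})-1=o(1)$ as $\tilde{s}\to 0$ controlled by H4, gives $R(\cdot;\epsilon)\ast f\to f$ in $H_{1}(S_{T})$ by dominated convergence. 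To upgrade this pointwise-in-$t$ statement to the uniform-in-$t$ statement required for $\varrho$, I would use that the well-posedness theory gives $t\mapsto\psi(\cdot,t)$ continuous into $H_{1}(S_{T})$, and hence $t\mapsto|\psi(\cdot,t)|^{2}$ continuous into $H_{1}(S_{T})$ by the Sobolev algebra property; precompactness of this trajectory combined with a finite $\eta$-net argument converts the pointwise convergence into uniform convergence over $[0,\tilde{T}]$. The symmetric assertion $\varrho(\tilde{T};\epsilon,\epsilon')\to 0$ as $\epsilon\to\epsilon'$ follows from the same mollifier estimates applied to the difference $R(\cdot;\epsilon)-R(\cdot;\epsilon')$.
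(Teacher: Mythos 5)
Your proposal is correct in substance and shares the paper's core strategy: a uniform-in-$\epsilon$ global $H_{1}$ bound $C_{u}$ obtained from conservation of the $L^{2}$ norm and the Hamiltonian, a three-way splitting of the nonlinearity difference into two pieces Lipschitz in $w=\psi_{\epsilon}-\psi$ plus a consistency error measuring $R_{\epsilon}\ast-I$, a Gronwall step, and a dominated-convergence argument on the Fourier multipliers $\hat{\zeta}(2\pi j\epsilon/T)$ (driven by $H3$, $H4$, $H5$) to show $\varrho\rightarrow 0$. The genuine difference is in how the Gronwall inequality is produced. The paper works with the Duhamel (mild) formulation $\psi_{\epsilon}=e^{-iL_{sa}t}\psi_{0,\epsilon}-i\int_{0}^{t}e^{-iL_{sa}(t-t^{'})}\mathcal{N}\,dt^{'}$, where $e^{-iL_{sa}t}$ is non-expansive on $H_{s}(S_{T})$; the full linear operator $-\tfrac{1}{2}\p_{x}^{2}+V$ therefore drops out of the estimate entirely, and the constants $3C_{u}^{2}$ and $3C_{u}\tilde{T}$ in the statement come directly from this integral inequality. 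Your differential energy estimate on $\gnorm{w}^{2}_{H_{1}(S_{T})}$ instead generates the commutator term $\mbox{Re}\left<-iV^{'}w,\p_{x}w\right>$, which can only be ``absorbed'' by adding $\gnorm{V^{'}}_{L^{\infty}}$ to the Gronwall rate; this is harmless because the theorem only asserts existence of some $C_{u}$ (enlarge it), but it means your route does not literally reproduce the displayed constants, and it also requires justifying time-differentiability of the $H_{1}$ norm along the flow, which the mild formulation sidesteps. Two smaller points: your consistency error is pinned to the local solution $\psi$ while the paper's $\varrho$ uses $\psi_{\epsilon^{'}}$ (cosmetic, both work with the uniform bound $C_{u}$); and your compactness/finite-net argument for uniformity of $\varrho$ over $t\in[0,\tilde{T}]$ is a legitimate, arguably more careful, substitute for the paper's route through Lemma \ref{fourierlemma} (continuity in time of the Fourier coefficients, so that $\tilde{C}(t;\epsilon,\epsilon^{'})$ is continuous and its supremum is attained), since the paper's maximizing time $t^{\ast}$ in principle depends on $\epsilon$.
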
  
Thus, on any finite interval of time, as one lets the nonlocality parameter approach zero, the solution to \eqref{nlocgp} converges uniformly in space to the solution of \eqref{locgp}.  This shows that AES is a common feature for a large class of potentials and nonlocal, repulsive interactions.  Therefore the results in \cite{decon} are likely due to artifacts of their numerical computations, as opposed to being inherent to the equation.     

As to the stability of \eqref{sol}, we first need to define the notion of stability to be established ({\it cf.} \cite{gss}).  Let $\psi(x,t)$ denote a solution to either \eqref{locgp} or \eqref{nlocgp} with initial condition $\psi(x,0)$.  Writing \eqref{sol} as $\phi_{\omega}(x)e^{-i\omega t}$, we say that $\eqref{sol}$ is orbitally stable in $H_{1}(S_{T})$, if for any $\rho>0$, there is a $\delta>0$ such that if $\left|\left|\psi_{0}(x) - \phi_{\omega}(x)\right|\right|_{H_{1}(S_{T})} < \delta$ then
\[
\sup_{t>0}\inf_{c \in [0,2 \pi)}\left|\left|\psi(x,t) - \phi_{\omega}(x)e^{ic}\right|\right|_{H_{1}(S_{T})} < \rho.
\]

The other notion of stability we use is that of spectral stability.  First, separate \eqref{locgp} or \eqref{nlocgp} into real and imaginary parts.  Denote the linearization of either of these systems around \eqref{sol} as $JL$.  Using the scaling $x\rightarrow kx$, $JL$ has terms that are $2\pi$-periodic functions.  Let $\sigma(JL)$ denote the spectrum of $JL$ computed over the space $H^{2}_{2}(S_{2\pi n})$, $n\in\mathbb{N}$.  In effect, we are computing the impact of perturbing \eqref{sol} by $2\pi n$-periodic perturbations, or $2\pi n/k$-periodic perturbations in the unscaled coordinate.  We say \eqref{sol} is spectrally stable if for $\lambda \in \sigma(JL)$, $\mbox{Re}(\lambda) \leq 0$.  Note, more details are provided in Section 3.  Also, given that the nonlinear problem is Hamiltonian, the condition of spectral stability reduces to having spectrum only on the imaginary line, {\it i.e.} $\mbox{Re}(\lambda)=0$.  With these definitions in hand, we prove the following three theorems.  Throughout these remaining theorems we assume that
\begin{itemize}
\item[$H1^{'}$:] $V(x)=V_{0}\sin^{2}(kx)$,
\item[$H2^{'}$:] $\zeta(x)\geq0$, $\zeta(x)$ is even, and $\gnorm{\zeta}_{L^{1}(\mathbb{R})}=1$,
\item[$H3^{'}$:] $\hat{\zeta}> 0$, and
\item[$H4^{'}$:] $\hat{\zeta}(\tilde{s}) \leq (1+|\tilde{s}|)^{-1/2-\tilde{\epsilon}}$, with $\tilde{\epsilon}>0$.
\end{itemize}

\begin{thm}
Let $\alpha=1$.  Assuming the Hypotheses $H1^{'}-H4^{'}$, for any values of $k$ and the nonlocality parameter $\epsilon$, and for perturbations of period $\frac{2\pi n}{k}$, where $n\in \mathbb{N}$, the solution \eqref{sol} is spectrally stable for sufficiently large offset size $B$, $V_{0}<0$, and $|V_{0}|$ sufficiently small.  
\label{specsta}
\end{thm}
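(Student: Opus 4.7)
I would prove Theorem \ref{specsta} by a perturbation argument starting from the exactly solvable case $V_0=0$. When $V_0=0$, the formulas for $r_{\mathrm{sol}}$ and $\theta$ collapse to the plane wave $\phi_0(x)=\sqrt{B}\,e^{ikx}$ with $\omega=B+k^2/2$. After the rescaling $x\mapsto\tilde x=kx$, so that perturbations live on $H^2_2(S_{2\pi n})$, and the gauge transformation $\psi=e^{-i\omega t+i\tilde x}\chi$ that absorbs the plane-wave background, the steady profile becomes the constant $\chi_0=\sqrt B$ and the linearization $JL_0$ has constant coefficients. Writing $\chi=\sqrt B+u+iv$ with real $u,v$ and expanding in the Fourier basis $e^{iq\tilde x}$, $q=m/n$, $m\in\mathbb Z$, the operator $JL_0$ restricts on each mode to a $2\times 2$ matrix whose characteristic equation reduces to
\[
(\lambda+ik^2q)^2=-\tfrac{k^4q^4}{4}-Bk^2q^2\,\hat\zeta(k\epsilon q).
\]
By $H3'$ the right-hand side is non-positive, so $\sigma(JL_0)\subset i\mathbb R$ and $JL_0$ is spectrally stable for every admissible $B,k,\epsilon$.

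For $V_0\neq 0$ the formulas in \eqref{sol} depend real-analytically on $V_0$, and hence $JL_{V_0}=JL_0+V_0\,\mathcal N+O(V_0^2)$ where $\mathcal N$ is a bounded operator on $H^2_2(S_{2\pi n})$ built from the $V_0$-derivatives of the profile, a smooth multiplication, and a convolution with $R$. The full problem is Hamiltonian, so $\sigma(JL_{V_0})$ is symmetric under $\lambda\mapsto-\lambda$ and $\lambda\mapsto\bar\lambda$, and an isolated imaginary eigenvalue can leave $i\mathbb R$ only by colliding with another imaginary eigenvalue of opposite Krein signature. For $m\neq 0$ the explicit formula above gives $|\lambda_m^{\pm}|\sim\sqrt B\cdot k|q|\sqrt{\hat\zeta(k\epsilon q)}$ as $B\to\infty$, producing gaps of order $\sqrt B$ between distinct Krein-signed pairs. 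Choosing $|V_0|$ small compared to these gaps, Kato's analytic perturbation theorem then guarantees that every non-zero-mode eigenvalue of $JL_{V_0}$ stays on the imaginary axis.

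The hard part is the $m=0$ mode, where $\lambda_0^\pm=0$ has algebraic multiplicity at least two and the standard Kato theorem for isolated simple eigenvalues does not apply directly. The $U(1)$-invariance of \eqref{nlocgp}, together with translation invariance on $S_{2\pi n}$, already forces $JL_{V_0}$ to have a non-trivial kernel for every $V_0$, but eigenvalues bifurcating from the $0$-eigenspace could in principle move off the axis. To control these I would compute the leading Puiseux expansion of the bifurcating eigenvalues using the analytic $V_0$-dependence of $r_{\mathrm{sol}}$, $\theta$, and $\omega$, and verify that for $\alpha=1$ and $V_0<0$ the leading Puiseux coefficient has the sign that keeps the branches on $i\mathbb R$. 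Combining this low-mode sign computation with the large-$B$ gap estimate for $m\neq 0$ then yields the theorem.
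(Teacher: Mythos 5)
Your skeleton matches the paper's (exact spectrum at $V_0=0$, then perturb in $V_0$, with large $B$ controlling the dangerous modes), and your gauge transformation to a constant-coefficient problem is a genuinely cleaner route to the $V_0=0$ spectrum than the paper's Lyapunov--Schmidt reduction around $JL_{c,\mu}$. But the central step --- why large $B$ is needed and what it buys --- is not correctly identified, and this is where the proposal has a real gap. Your own computation shows $\sigma(JL_0)\subset i\mathbb{R}$ for \emph{every} $B>0$ when $V_0=0$, so stability of the unperturbed problem cannot be the issue; the issue is whether it survives the perturbation, and for a Hamiltonian system that is governed by Krein signatures, not by spectral gaps. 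The paper computes $\left<L_{\mu}\varphi_{n},\varphi_{n}\right>$ explicitly for each eigenvector and finds that the four eigenvalues associated with the modes $n=0,1$ (precisely the ones that collide at the origin as $\mu\to0$) have \emph{negative} signature for small $B$ and positive signature for $B>B^{\ast}$; combined with the H\v{a}r\v{a}gu\c{s}--Kapitula count $k_{r}+k_{c}+k^{-}_{i}=n(L_{\mu})$ this gives $n(L_{\mu})=0$, i.e.\ $L_{\mu}\geq0$, for large $B$, which is the open condition that survives small $V_0$ and then forces the spectrum onto the imaginary axis via $\left<L\varphi,\varphi\right>=-\lambda\left<J\varphi,\varphi\right>$. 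Your substitute --- ``gaps of order $\sqrt{B}$ between distinct Krein-signed pairs'' --- does not work: the dangerous eigenvalues are exactly those approaching the origin (for the paper's $\lambda^{-}(1)$ one has $\lambda=O(\mu)$ as $\mu\to0$ no matter how large $B$ is), so no gap uniform in the mode index is available near the kernel; and a gap bound without a signature computation cannot distinguish the large-$B$ regime (all signatures positive, stability robust) from the small-$B$ regime, where negative-signature eigenvalues are already present at $V_0=0$ --- the latent instability that Theorem \ref{unstab} activates. You never compute a single signature, so the hypothesis ``$B$ sufficiently large'' is never used in a way that could close the argument.

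The same gap reappears at the zero mode. You correctly flag the two-dimensional generalized kernel coming from the phase symmetry as the delicate point, but ``verify that the leading Puiseux coefficient has the sign that keeps the branches on $i\mathbb{R}$'' is precisely the computation whose outcome is decided by the sign of $L$ on the relevant subspace, i.e.\ by the Krein-signature and large-$B$ information you have not established. The paper handles this by showing, via convergence of the Riesz projections $P_{\mu}\to P_{0}$, that the generalized kernel of $JL_{0}$ is exactly $\mathrm{span}\{\varphi_{nu},\varphi_{gn}\}$, that $L_{0}\varphi_{gn}=2B\varphi_{gn}$ with $B>0$, and hence that $n(L_{0})=0$ persists for $V_0<0$ small. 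To repair your proof you would need to replace the gap estimate by a proof that $L_{\mu}\geq0$ for $B$ large (equivalently, the signature computation), at which point you would essentially be reproducing the paper's argument; the constant-coefficient gauge would still be a worthwhile simplification of the $V_0=0$ computation.
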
   

\begin{thm}
\label{orbsta}
Let $\alpha=1$.  Fix the nonlocality parameter $\epsilon$ and the value $k$.  Assuming the Hypotheses $H1^{'}-H4^{'}$, for offset parameter $B$ sufficiently large, $V_{0}<0$, and $|V_{0}|$ sufficiently small, the solution \eqref{sol} of the nonlocal Gross-Pitaevskii equation \eqref{nlocgp} is orbitally stable with respect to perturbations with periods $T=\frac{2\pi n}{k}$, where $n\in\mathbb{N}$.  
\end{thm}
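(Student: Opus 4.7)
The plan is to apply the Grillakis--Shatah--Strauss (GSS) orbital stability framework (\cf \cite{gss}). Because $V(x)=V_{0}\sin^{2}(kx)$ breaks translation invariance, the only continuous symmetry of \eqref{nlocgp} on $H_{1}(S_{T})$ is the phase rotation $\psi\mapsto e^{ic}\psi$, with conserved charge $N(\psi)=\tfrac{1}{2}\norm{\psi}^{2}$ and conserved Hamiltonian $E(\psi)$ equal to the natural energy associated with \eqref{nlocgp}. The profile $\phi_{\omega}(x)=\sqrt{B}\cos(kx)+i\sqrt{B+A}\sin(kx)$ satisfies the Euler--Lagrange equation $E^{\prime}(\phi_{\omega})-\omega N^{\prime}(\phi_{\omega})=0$, so it is a critical point of the Lyapunov functional $\mathcal{E}_{\omega}=E-\omega N$ on $H_{1}(S_{T})$.

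First, I would reinterpret the spectral content of Theorem \ref{specsta} in terms of the self-adjoint Hessian $L=\mathcal{E}_{\omega}^{\prime\prime}(\phi_{\omega})$ acting on the real/imaginary split of perturbations on $H^{2}_{2}(S_{2\pi n/k})$. Because the nonlinear problem is Hamiltonian and $\sigma(JL)\subset i\mathbb{R}$ by Theorem \ref{specsta}, a Krein-signature argument constrains the Morse index $n(L)$ and the dimension of $\ker L$. Phase invariance gives $i\phi_{\omega}\in\ker L$ automatically; the task is to show that, in the parameter regime of the theorem, $\ker L=\operatorname{span}\{i\phi_{\omega}\}$ and $n(L)=1$.

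Second, I would carry out a perturbative analysis about the exactly solvable $V_{0}=0$ case. There, $\phi_{\omega}$ reduces to the plane wave $\sqrt{B}\,e^{\pm ikx}$ and $L$ becomes a constant-coefficient operator whose Fourier symbol involves the convolution with $R(\cdot;\epsilon)$; using $\hat\zeta>0$ (Hypothesis $H3^{\prime}$) and the explicit formulas for $A,B,\omega,\beta(k;\epsilon)$, one can compute the symbol block by block and verify that $L$ has exactly one negative Fourier mode (the amplitude direction) and one-dimensional kernel (the phase direction). Kato perturbation theory then preserves $n(L)=1$ and $\dim\ker L=1$ for $|V_{0}|$ small, uniformly on the $2\pi n$-tori. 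The Vakhitov--Kolokolov slope condition $\frac{d}{d\omega}\norm{\phi_{\omega}}^{2}>0$ would be checked by an explicit asymptotic computation: the mass $\norm{\phi_{\omega}}^{2}$ is an affine function of $B$ on each torus, while $\omega=\omega(B,k,V_{0},\epsilon)$ is affine in $B$ with positive coefficient $\alpha=1$, giving the correct sign for $B$ large and $|V_{0}|$ small.

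Finally, with the negative index, kernel, and slope condition in hand, the GSS theorem yields orbital stability in $H_{1}(S_{T})$ with $T=2\pi n/k$, the only input still required being the global well-posedness of \eqref{nlocgp} in $H_{1}(S_{T})$ already announced in the paper. The principal obstacle is the spectral bookkeeping in the second step: the operator $L$ is neither constant-coefficient (because of $V$) nor purely differential (because of the convolution $R(\cdot;\epsilon)*|\cdot|^{2}$), so neither standard Sturm--Liouville theory nor a pure Fourier computation applies. Controlling the perturbation from $V_{0}=0$ while retaining uniformity in the large parameter $B$ and in the nonlocality $\epsilon$, so that the index count and the kernel structure persist on every $2\pi n$-torus simultaneously, is the crux of the argument.
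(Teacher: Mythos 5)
Your skeleton---phase symmetry only, the Lyapunov functional $E-\omega I$, identification of $\phi_{\omega}$ as a critical point, an explicit spectral computation at $V_{0}=0$ followed by perturbation in $V_{0}$, and an appeal to \cite{gss}---matches the paper's strategy. The genuine gap is in your spectral bookkeeping: you set out to prove $n(L)=1$ with one negative ``amplitude'' direction and then to close the argument with the Vakhitov--Kolokolov slope condition $\frac{d}{d\omega}\gnorm{\phi_{\omega}}^{2}>0$. That is the picture for the \emph{focusing} case. Here $\alpha=1$ (repulsive), and what the paper actually establishes in the regime of the theorem ($B$ large, $|V_{0}|$ small) is that $L$ is positive \emph{semi}-definite with $n(L)=0$ and kernel spanned only by the phase direction; this is precisely why the paper can ``forgo the requirement that $d(\omega)$ be convex'' and conclude orbital stability directly, with no slope condition at all. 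If you carried out your step two honestly you would find no negative mode, your planned verification would fail as stated, and the logic of your step three (GSS with index one plus convexity) would have to be replaced by the simpler index-zero Lyapunov argument. As written, the proposal hinges on an index count that is wrong for this problem.

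There are also two technical points you gloss over that constitute the real work. First, at $V_{0}=0$ the operator $L$ in the real/imaginary splitting is \emph{not} constant-coefficient: the term $2B\bigl(\begin{smallmatrix}\cos x & 0\\ 0 & \sin x\end{smallmatrix}\bigr)\bar{R}_{k,\epsilon}\bigl(\begin{smallmatrix}\cos x & \sin x\\ \cos x & \sin x\end{smallmatrix}\bigr)$ couples Fourier modes $n$ and $n\pm 2$, so a ``block by block'' symbol computation is not available without first performing a gauge rotation; the paper instead runs a Lyapunov--Schmidt reduction on $JL_{\mu}$ about the constant-coefficient part, computes every eigenvalue and its Krein signature explicitly, and deduces $n(L_{\mu})=0$ for $\mu\in(0,1)$ and $B>B^{\ast}$ from the H\u{a}r\u{a}gu\c{s}--Kapitula identity $k_{r}+k_{c}+k_{i}^{-}=n(L_{\mu})$. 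Second, the $\mu=0$ component (which is the one carrying the phase kernel) needs separate care: the paper proves via resolvent convergence $\gnorm{(JL_{\mu}-\lambda)^{-1}-(JL_{0}-\lambda)^{-1}}_{2,v}\rightarrow 0$ and a Riesz-projection dimension count that the generalized kernel of $JL_{0}$ is exactly two-dimensional, and checks $L_{0}\varphi_{gn}=2B\varphi_{gn}$ to see that $L_{0}$ itself has only a simple kernel. Neither of these ingredients appears in your plan, and your closing paragraph correctly identifies this as the crux without supplying a mechanism for it.
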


\begin{thm}
\label{unstab}
Let $\alpha =1$.  Assuming the Hypotheses $H1^{'}-H4^{'}$, with $A=\frac{-V_{0}}{\alpha \beta(k;\epsilon)}$, if $A \geq 2.46 k^2$, then for offset size $B$ and $\epsilon$ sufficiently small, \eqref{sol} is spectrally unstable with respect to perturbations of period $\frac{2\pi n}{k}$, where $n\in \mathbb{N}$.  
\end{thm}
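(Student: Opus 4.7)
The plan is to exhibit an eigenvalue of the linearization with strictly positive real part in the double limit $B\to 0^{+}$, $\epsilon\to 0^{+}$, and then invoke continuous dependence of discrete spectra to extend the conclusion to a neighborhood of $(0,0)$. First, I would linearize \eqref{nlocgp} by substituting $\psi=(\phi_{\omega}+u+iv)e^{-i\omega t}$ with $\phi_{\omega}(x)=\sqrt{B}\cos(kx)+i\sqrt{B+A}\sin(kx)$ and retaining terms linear in the real-valued perturbations $u,v$, obtaining a Hamiltonian spectral problem $\lambda(u,v)^{T}=JL(u,v)^{T}$ posed on $H^{2}_{2}(S_{2\pi n/k})$. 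In the base case $B=\epsilon=0$ one has $\beta(k;0)=1$, hence $V_{0}=-A$, $\phi_{\omega}^{(0)}(x)=i\sqrt{A}\sin(kx)$, and $\omega=k^{2}/2$; a direct computation shows that the $\sin^{2}(kx)$ contributions from $V(x)$, from the convolved density $|\phi_{\omega}^{(0)}|^{2}$, and from the frequency shift $\omega$ cancel on the diagonal of $L$, reducing the spectral problem to $\lambda^{2}u=\mathcal{L}_{+}\mathcal{L}_{-}u$ with
\[
\mathcal{L}_{-}=\tfrac{1}{2}\partial_{x}^{2}+\tfrac{k^{2}}{2}, \qquad \mathcal{L}_{+}=-\tfrac{1}{2}\partial_{x}^{2}-\tfrac{k^{2}}{2}+2A\sin^{2}(kx).
\]

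The next step is a Bloch--Floquet analysis of the base-case operator. The coefficients are $\pi/k$-periodic, so I would write $u=e^{i\mu x}P(x)$ with $P$ of period $\pi/k$ and Fourier-expand $P$; because the allowed perturbation periods $\{2\pi n/k:n\in\mathbb{N}\}$ correspond to Bloch momenta $\mu=jk/n$ that are dense in $[-k,k)$, by continuity of the spectrum in $\mu$ it suffices to find a single $\mu_{*}$ producing a positive real eigenvalue of $\mathcal{L}_{+}\mathcal{L}_{-}$. In Fourier variables the problem becomes an infinite tridiagonal matrix, and the leading instability arises from the subspace built around the two-dimensional kernel $\mathrm{span}\{\sin(kx),\cos(kx)\}$ of $\mathcal{L}_{-}$ at the band edge $\mu_{*}=k$. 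Computing the corresponding low-mode secular determinant yields the threshold $A\geq 2.46\,k^{2}$, beyond which a pair of purely imaginary eigenvalues of opposite Krein signature collide and bifurcate off the imaginary axis into a Hamiltonian quartet.

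To extend the instability to small positive $B$ and $\epsilon$, I would use that $JL$ has compact resolvent on $H^{2}_{2}(S_{2\pi n/k})$, so its spectrum is discrete and the unstable eigenvalue $\lambda_{*}$ found in the base case is isolated. Continuous dependence of the coefficients of $JL$ on $B$ follows from the explicit formulas for $\phi_{\omega}$ and $\omega$, while continuous dependence on $\epsilon$ follows from $R(\cdot;\epsilon)*f\to f$ in $L^{2}(S_{T})$ as $\epsilon\to 0^{+}$ under the hypotheses $H2'$--$H4'$, a convergence already exploited in the proof of Theorem \ref{aes}. A Kato-type perturbation argument then preserves a positive-real-part eigenvalue for all $(B,\epsilon)$ in some neighborhood of $(0,0)$.

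The main obstacle is the rigorous identification of the numerical threshold $A\geq 2.46\,k^{2}$ in the Bloch step. Because $\ker\mathcal{L}_{-}$ is two-dimensional at the band edge, naive diagonal perturbation theory fails: one must perform a Lyapunov--Schmidt reduction onto the kernel and track the Krein signatures of the imaginary eigenvalue pairs as $A$ increases, in order to identify the first collision producing a complex quartet. A secondary challenge is verifying that the low-mode truncation does not overlook an earlier instability at a different Bloch momentum or involving higher Fourier modes; this requires either a monotonicity-in-$A$ argument or an explicit estimate confirming that the complement of the truncated subspace contributes only purely imaginary spectrum for $A$ below the threshold.
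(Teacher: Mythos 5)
Your starting point matches the paper's: at $B=0$ the linearization decouples into the canonical form $J\,\mathrm{diag}(L^{+}_{\mu},L^{-}_{\mu})$ with $L^{+}_{\mu}$ constant-coefficient and $L^{-}_{\mu}$ carrying the $2A\sin(x)R_{k,\epsilon,\mu}\ast(\sin(x)\,\cdot)$ term, and your final persistence step for small $B,\epsilon$ is in the right spirit. But the core of your argument --- how the hypothesis $A\geq 2.46k^{2}$ actually produces an unstable eigenvalue --- is missing, and the route you sketch would not recover it. The paper does not locate the instability by tracking a collision of opposite-signature imaginary eigenvalues into a quartet. It applies the H\v{a}r\v{a}gu\c{s}--Kapitula index bound $k_{r}\geq \left|n(L^{+}_{\mu})-n(L^{-}_{\mu})\right|$ for the decoupled system: $n(L^{+}_{\mu})=2$ is immediate from the symbol, so spectral instability (real eigenvalues) follows as soon as one shows $n(L^{-}_{\mu})=0$, i.e. that the Hill-type operator $L^{-}_{0}=-\frac{k^{2}}{2}(\partial_{x}^{2}+1)+2A\sin^{2}(x)$ is positive definite; Hill band theory then transfers positivity from $\mu=0$ to all $\mu$, hence to every perturbation period $2\pi n/k$ at once. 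No eigenvalue collision, secular determinant, or Lyapunov--Schmidt reduction on $\ker\mathcal{L}_{-}$ is needed.

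The constant $2.46$ is then a by-product of a sufficient condition for this positive definiteness: one isolates the single negative direction $e_{0}$ of the quadratic form $\left<L^{-}_{0}f,f\right>$, completes a square against the coupling to the $\hat{g}_{\pm 2}$ modes, and applies Young's inequality, arriving at $\frac{3k^{2}+A}{2}-\frac{A^{2}}{\pi c}>0$ with $c=A-\frac{k^{2}}{2}$, which holds for $A\geq 2.4533\ldots k^{2}$. The appearance of $\pi$ (from the Fourier normalization) shows this threshold is an estimate, not a bifurcation value, so a ``low-mode secular determinant'' at a band edge cannot be expected to reproduce it; nor is a monotonicity-in-$A$ argument or control of higher Fourier modes required once the quadratic-form estimate is in hand. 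You correctly flagged this step as the main obstacle, but as proposed it is both unexecuted and aimed at the wrong mechanism (a quartet instability of $k_{c}$ type rather than the $k_{r}\geq 2$ real-eigenvalue instability the index theorem delivers). A secondary issue: exhibiting a single unstable Bloch momentum $\mu_{*}$ and appealing to density of admissible momenta would not cover co-periodic perturbations ($n=1$), whereas the index/positivity argument applies uniformly in $\mu$.
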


The content of these three theorems shows that the role of a small nonlocality parameter is dependent upon the other parameters in the problem, particularly the offset size $B$.  Theorems \ref{specsta} and \ref{orbsta} are proven by showing that if $B$ is sufficiently large, then the operator $L$ is positive semi-definite.  Then, using Krein signature arguments found in \cite{hara}, we get both spectral and orbital stability.  Thus, introducing small nonlocality should not effect the stability of \eqref{sol}, while Theorem \ref{unstab} shows that if $B$ is too small, then even removing the nonlocality parameter does not stabilize the solution.  In contrast, as is shown later, we can always get a spectrally stable problem by letting $\epsilon \rightarrow \infty$ for any choice of the other parameters.  Thus it appears that while a small amount of nonlocality does not affect stability, large amounts do.     

The above theorems do not allow for arbitrary choices of parameters since each theorem requires $|V_{0}|$ to be small, which ensures that $L$ remains positive semi-definite.  We cannot at this time provide explicit bounds on how large $|V_{0}|$ can be such that Theorems \ref{specsta} and \ref{orbsta} remain true since we cannot control the spectra of $JL$ for $V_{0}\neq 0$.  Therefore, we must treat the parameter values used in \cite{decon} as outside the scope of what is proved in this paper.  We provide numerical experiments in order to make conjectures about the stability of \eqref{sol}.  First, we use the above theorems to calibrate our numerics by picking parameter values that can reasonably be believed to satisfy the constraints of Theorems \ref{orbsta} and \ref{unstab}.  Our numerics behave as the theory predicts.  Second, we present numerical experiments using the parameter values found in \cite{decon}, and from this we conjecture that in fact \eqref{sol} should be stable for the parameter values chosen.  As mentioned above, these are $B=1$, $V_{0}=-1$, $k=1$, and $\alpha=1$.  

As in \cite{decon}, a pseudo-spectral method is used for the spatial variable, while a standard Runge-Kutta method is used for time evolution.  The most likely explanation for the discrepancy between the results reported here and those of \cite{decon} is the way in which the convolution is handled.  In this paper, no approximation is made in the integral or to the convolution kernel.  However, in \cite{decon}, it appears an approximation is made to the kernel which introduces an error that appears difficult for the pseudo-spectral method to resolve.  In private communications, the authors of \cite{decon} have been made aware of these discrepancies.  They have encouraged the explanation for them in this manuscript.  

The structure of the paper is as follows.  In Section 2, we present the proof of the AES of \eqref{locgp} and \eqref{nlocgp}.  In Section 3, we find the linearization around \eqref{sol}, and we establish some basic results about the convolution kernel that are used later.  In Section 4, Theorems \ref{specsta} and \ref{orbsta} are proved, while in Section 5, Theorem \ref{unstab} is proved.  Finally, Section 6 presents the numerical results.  
%%%%%%%%%%%%%%%%%%%%%%%%%%%%%%%%%%%%%%%%%%%%%%%%%%%%%%%%%%%%%%%%%%%%%%%%%%%%%%%%%%%%%%%%%%%%%%%%%%%%%%%%
%%%%%%%%%%%%%%%%%%%%%%%%%%%%%%%%%%%%%%%%%%%%%%%%%%%%%%%%%%%%%%%%%%%%%%%%%%%%%%%%%%%%%%%%%%%%%%%%%%%%%%%%
\section{Asymptotic Equivalence of Stability}
We proceed in the following fashion.  In order to make the presentation self-contained, we first establish the local-in-time well-posedness of the nonlocal Gross-Pitaevskii equation from which we obtain a local-in-time form of AES.  We then establish the continuity in $\epsilon$ of solutions to \eqref{nlocgp}.  Finally, we establish the global-in-time well posedness of \eqref{nlocgp} which allows us to prove Theorem \ref{aes}.  Note, we use Hypotheses $H1-H5$ throughout the remainder of the section. 

We begin by establishing some basic lemmas concerning the convolution kernel $R$.  We show, using the assumptions stated for Theorem \ref{aes}, that the Fourier transform of the convolution kernel is Lipschitz continuous in $\epsilon$. 
\begin{lem}
One has  
\beq
|\hat{R}(\tilde{s};\epsilon) - \hat{R}(\tilde{s};\epsilon^{'})| \leq |\epsilon - \epsilon^{'}| |\tilde{s}|\gnorm{x \zeta(x)}_{L^{1}(\mathbb{R})}. \nonumber
\eeq
\label{convcont}
\end{lem}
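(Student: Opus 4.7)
The plan is to exploit the scaling structure of $R$ to reduce the claim to a statement about $\hat{\zeta}$. Since $R(x;\epsilon) = \tfrac{1}{\epsilon}\zeta(x/\epsilon)$, the change of variables $u = x/\epsilon$ in the defining integral for the Fourier transform yields
\[
\hat{R}(\tilde{s};\epsilon) = \int_{-\infty}^{\infty} e^{-i\tilde{s}x}\tfrac{1}{\epsilon}\zeta(x/\epsilon)\,dx = \int_{-\infty}^{\infty} e^{-i\tilde{s}\epsilon u}\zeta(u)\,du = \hat{\zeta}(\epsilon \tilde{s}),
\]
so the inequality reduces to showing $|\hat{\zeta}(\epsilon\tilde{s}) - \hat{\zeta}(\epsilon'\tilde{s})| \leq |\epsilon-\epsilon'||\tilde{s}|\,\gnorm{x\zeta(x)}_{L^{1}(\mathbb{R})}$.

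Next I would invoke hypothesis $H4$, which says $x\zeta(x) \in L^{1}(\mathbb{R})$. This is precisely the integrability needed to differentiate under the integral sign in the Fourier transform, so $\hat{\zeta}$ is continuously differentiable with
\[
\hat{\zeta}'(\tau) = -i\int_{-\infty}^{\infty} x\zeta(x) e^{-i\tau x}\,dx,
\]
and the triangle inequality gives the uniform bound $|\hat{\zeta}'(\tau)| \leq \gnorm{x\zeta(x)}_{L^{1}(\mathbb{R})}$ for every $\tau \in \mathbb{R}$.

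Finally, applying the fundamental theorem of calculus to the $C^{1}$ function $\tau \mapsto \hat{\zeta}(\tau\tilde{s})$ on the interval with endpoints $\epsilon$ and $\epsilon'$ gives
\[
\hat{\zeta}(\epsilon\tilde{s}) - \hat{\zeta}(\epsilon'\tilde{s}) = \tilde{s}\int_{\epsilon'}^{\epsilon} \hat{\zeta}'(\tau \tilde{s})\,d\tau,
\]
and taking absolute values together with the uniform derivative bound immediately yields the desired estimate. There is no real obstacle here; the argument is elementary, and the only substantive ingredient is that hypothesis $H4$ provides exactly the first moment of $\zeta$ needed to control $\hat{\zeta}'$. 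The same argument could alternatively be phrased by writing $\hat{R}(\tilde{s};\epsilon) - \hat{R}(\tilde{s};\epsilon')$ as a single integral against the difference $R(\cdot;\epsilon) - R(\cdot;\epsilon')$, but the rescaled form above makes the Lipschitz dependence on $\epsilon$ most transparent.
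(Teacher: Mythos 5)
Your proof is correct and follows essentially the same route as the paper: rescale to get $\hat{R}(\tilde{s};\epsilon)=\hat{\zeta}(\epsilon\tilde{s})$ and then bound the increment in $\epsilon$ by the uniform derivative bound $\gnorm{x\zeta(x)}_{L^{1}(\mathbb{R})}|\tilde{s}|$, which is exactly what the paper's appeal to the Mean-Value Theorem amounts to. If anything, your integral form of the fundamental theorem of calculus is the slightly more careful phrasing, since the pointwise Mean-Value Theorem does not hold verbatim for complex-valued functions.
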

%%%%%%%%%%%%%%%%%%%%%%%%%%%%%%%%%%%%%%%%%%%%%%%%%%%%%%%%%%%%%%%%%%%%%%%%%%%%%%%%%%%%%%%%%%%%%%%%%%%%%%%%
\begin{proof}
With
\beq
\hat{R}(\tilde{s};\epsilon) = \int_{-\infty}^{\infty}e^{-i \tilde{s} x}R(x;\epsilon) dx, \nonumber
\eeq
we have
\beq
\hat{R}(\tilde{s};\epsilon) = \int_{-\infty}^{\infty} e^{-i \epsilon \tilde{s} x}\zeta(x) dx. \nonumber
\eeq
Using the Mean-Value Theorem, one gets
\[
|\hat{R}(\tilde{s};\epsilon) - \hat{R}(\tilde{s};\epsilon^{'})| \leq |\epsilon - \epsilon^{'}| |\tilde{s}| \gnorm{
\cdot \zeta(\cdot)}_{L^{1}(\mathbb{R})}. 
\]
Thus the result is shown.  Using Hypothesis $H4$, which amounts to assuming $\gnorm{\cdot \zeta(\cdot)}_{L^{1}(\mathbb{R})}  < \infty$, we also have that the bound is meaningful.   
\end{proof}
%%%%%%%%%%%%%%%%%%%%%%%%%%%%%%%%%%%%%%%%%%%%%%%%%%%%%%%%%%%%%%%%%%%%%%%%%%%%%%%%%%%%%%%%%%%%%%%%%%%%%%%%
Let $R_{\epsilon}(\cdot) = R(\cdot,\epsilon)$.  We then show
%%%%%%%%%%%%%%%%%%%%%%%%%%%%%%%%%%%%%%%%%%%%%%%%%%%%%%%%%%%%%%%%%%%%%%%%%%%%%%%%%%%%%%%%%%%%%%%%%%%%%%%%
\begin{lem}  
Given Hypotheses $H3$ and $H5$, for $f \in L^{2}(S_{T})$,  
\beq
R_{\epsilon} \ast f = R_{\epsilon} \ast \left(\sum_{j=-\infty}^{\infty} \hat{f}_{j} e_{j}(x) \right) = \sum_{j=-\infty}^{\infty} \hat{f}_{j}\hat{R_{\epsilon}}\left(\frac{2\pi j}{T}\right) e_{j}(x). \nonumber
\eeq    
\label{convcompute}
\end{lem}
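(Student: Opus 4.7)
The plan is to verify the identity first on each individual Fourier basis element $e_{j}$, then extend by linearity and continuity to all of $L^{2}(S_{T})$.

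For fixed $j \neq 0$, I would compute
\[
(R_{\epsilon} \ast e_{j})(x) = \int_{-\infty}^{\infty} R_{\epsilon}(x-y)\, \frac{1}{\sqrt{T}}\, e^{-2\pi i j y/T}\, dy
\]
via the change of variables $z = x-y$, which factors out $e_{j}(x)$ and leaves $\int R_{\epsilon}(z) e^{2\pi i j z/T}\, dz = \hat{R}_{\epsilon}(-2\pi j/T)$. Because $\zeta$ is stipulated to be even, $R_{\epsilon}$ is even, so $\hat{R}_{\epsilon}$ is real and even, giving $\hat{R}_{\epsilon}(-2\pi j/T) = \hat{R}_{\epsilon}(2\pi j/T)$. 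The $j=0$ case reduces to $\gnorm{R_{\epsilon}}_{L^{1}(\mathbb{R})}=1$ via Hypothesis $H3$, since $e_{0}$ is constant. Linearity then handles arbitrary trigonometric polynomials $f_{N} = \sum_{|j|\leq N} \hat{f}_{j} e_{j}$.

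To pass to general $f \in L^{2}(S_{T})$, I would show that both sides of the claimed identity define bounded operators on $L^{2}(S_{T})$, so the identity can be extended from its dense agreement on trigonometric polynomials. For the right-hand side, Parseval together with the bound $|\hat{R}_{\epsilon}(\tilde{s})| \leq 1$ (immediate from $R_{\epsilon} \geq 0$ and $\gnorm{R_{\epsilon}}_{L^{1}(\mathbb{R})}=1$, and indeed sharpened by Hypothesis $H5$) shows that the map $f \mapsto \sum_{j} \hat{f}_{j}\hat{R}_{\epsilon}(2\pi j/T)e_{j}$ is bounded on $L^{2}(S_{T})$. For the left-hand side, I would reinterpret the continuous convolution as a torus convolution by periodizing the kernel: set $\tilde{R}_{\epsilon}(x) = \sum_{n\in\mathbb{Z}} R_{\epsilon}(x+nT)$, which lies in $L^{1}(S_{T})$ with $\gnorm{\tilde{R}_{\epsilon}}_{L^{1}(S_{T})}=\gnorm{R_{\epsilon}}_{L^{1}(\mathbb{R})}=1$ by Hypothesis $H3$ (monotone convergence makes the unfolding rigorous since $R_{\epsilon}\geq 0$). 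Then
\[
(R_{\epsilon} \ast f)(x) = \int_{-T/2}^{T/2} \tilde{R}_{\epsilon}(x-\tilde{y}) f(\tilde{y})\, d\tilde{y},
\]
and Young's inequality on the torus yields $\norm{R_{\epsilon}\ast f}\leq \norm{f}$, so $f \mapsto R_{\epsilon} \ast f$ is also continuous on $L^{2}(S_{T})$.

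The main technical point I would watch carefully is this periodization step, which is where one confirms that the convolution over $\mathbb{R}$ against a periodic $f$ actually produces a well-defined element of $L^{2}(S_{T})$ depending continuously on $f$; once this is in hand, the remainder is a routine density argument combined with the basis-element computation.
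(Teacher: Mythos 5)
Your proposal is correct, and the basis-element computation $R_{\epsilon}\ast e_{j} = \hat{R_{\epsilon}}(2\pi j/T)\,e_{j}$ is the same as the paper's, but the way you pass from trigonometric polynomials to general $f\in L^{2}(S_{T})$ is genuinely different. The paper justifies term-by-term convolution of the full Fourier series directly: it bounds $\sum_{j}|\hat{f}_{j}|\,|R_{\epsilon}\ast e_{j}|$ by Cauchy--Schwarz using the square-summability of $\hat{R_{\epsilon}}(2\pi j/T)$, which is where Hypothesis $H5$ enters, and then invokes a corollary of the Dominated Convergence Theorem to interchange $R_{\epsilon}\ast$ with the sum. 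You instead show both sides are bounded operators on $L^{2}(S_{T})$ --- periodizing the kernel and applying Young's inequality on the torus for the left side, Parseval plus $|\hat{R}_{\epsilon}|\leq 1$ for the right --- and conclude by density. Your route needs only $\gnorm{R_{\epsilon}}_{L^{1}(\mathbb{R})}=1$ and nonnegativity, so it dispenses with the decay in $H5$ entirely and is in that sense more economical; the paper's route buys absolute convergence of the termwise series, which it reuses in later lemmas. Two small points to watch: your change of variables produces $\hat{R}_{\epsilon}(-2\pi j/T)$, so you genuinely need the evenness of $\zeta$ (part of the paper's standing setup, though not among the cited hypotheses $H3$, $H5$) to match the stated formula, whereas the paper's convolution convention $\int R(y-x;\epsilon)f(y)\,dy$ yields $\hat{R}_{\epsilon}(2\pi j/T)$ directly; and the periodization step you flag as delicate is indeed the right thing to be careful about, but monotone convergence with $R_{\epsilon}\geq 0$ handles it exactly as you say.
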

%%%%%%%%%%%%%%%%%%%%%%%%%%%%%%%%%%%%%%%%%%%%%%%%%%%%%%%%%%%%%%%%%%%%%%%%%%%%%%%%%%%%%%%%%%%%%%%%%%%%%%%%
\begin{proof}
First we note that 
\[
\sum_{j=-\infty}^{\infty}|\hat{f}_{j}| |R_{\epsilon} \ast e_{j}| \leq \left|\left|f \right|\right|_{2}\left(\sum_{j=-\infty}^{\infty}\hat{R_{\epsilon}}^{2}\left(\frac{2\pi j}{T}\right)\right)^{1/2} < \infty,
\]
since for $j\neq 0$ 
\begin{eqnarray}
R_{\epsilon}\ast e_{j}(x) = & \ds{\int_{-\infty}^{\infty}}R(y-x;\epsilon)\frac{e^{-i2\pi jy/T}}{\sqrt{T}}dy \nonumber \\
= & e_{j}(x) \ds{\int_{-\infty}^{\infty}} R(\tilde{y};\epsilon)e^{-i(2\pi j\tilde{y})/T} d\tilde{y}, \nonumber 
\end{eqnarray}
$R_{\epsilon}\ast e_{0}(x)=\hat{R_{\epsilon}}(0)e_{0}(x)$, and $\hat{R_{\epsilon}}(\tilde{s}) \leq (1+\epsilon|\tilde{s}|)^{-1/2-\tilde{\epsilon}}$ by Hypothesis $H5$.  By Hypothesis $H3$, $\gnorm{ R_{\epsilon}(x)}_{L^{1}(\mathbb{R})} = 1$ , and by a corollary to the Dominated Convergence Theorem (\cite{foll}, Theorem 2.25) one has
\[
R_{\epsilon} \ast \left(\sum_{j=-\infty}^{\infty} \hat{f}_{j} e_{j}(x) \right) = \sum_{j=-\infty}^{\infty} \hat{f}_{j} R_{\epsilon}\ast e_{j}(x), 
\]
or
\[
R_{\epsilon} \ast f = \sum_{j=-\infty}^{\infty} \hat{f}_{j}\hat{R_{\epsilon}}\left(\frac{2\pi j}{T}\right) e_{j}(x), 
\]
and the result is shown.  
\end{proof}
%%%%%%%%%%%%%%%%%%%%%%%%%%%%%%%%%%%%%%%%%%%%%%%%%%%%%%%%%%%%%%%%%%%%%%%%%%%%%%%%%%%%%%%%%%%%%%%%%%%%%%%%
From the previous lemma, one gets
\begin{lem}
Let $s>\frac{1}{2}$.  One has that 
\beq
\opHnorm{R_{\epsilon}\ast|f|^2} \leq \opHnorm{f}^2. \nonumber
\eeq
\label{hsconv}
\end{lem}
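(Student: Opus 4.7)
The plan is to combine the Fourier multiplier representation from Lemma \ref{convcompute} with the Banach algebra property of $H_{s}(S_{T})$ for $s>\frac{1}{2}$, using Hypothesis $H5$ to ensure that the convolution factor does not inflate the norm.

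First I would invoke Lemma \ref{convcompute} to write
\[
R_{\epsilon}\ast |f|^{2} = \sum_{j=-\infty}^{\infty} \widehat{|f|^{2}}_{j}\,\hat{R}_{\epsilon}\!\left(\frac{2\pi j}{T}\right) e_{j}(x),
\]
so that by Parseval
\[
\opHnorm{R_{\epsilon}\ast |f|^{2}}^{2} = \sum_{j=-\infty}^{\infty}\left<j\right>^{s}\left|\widehat{|f|^{2}}_{j}\right|^{2}\left|\hat{R}_{\epsilon}\!\left(\tfrac{2\pi j}{T}\right)\right|^{2}.
\]
Since $\hat{R}_{\epsilon}(\tilde{s})=\hat{\zeta}(\epsilon\tilde{s})$, Hypothesis $H5$ immediately yields $|\hat{R}_{\epsilon}(\tilde{s})|\leq 1$ uniformly in $\tilde{s}$ and $\epsilon\geq 0$, so the multiplier can be dropped and we are reduced to showing $\opHnorm{|f|^{2}}\leq \opHnorm{f}^{2}$.

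Second, I would establish this reduced bound by the standard Banach algebra argument for $H_{s}(S_{T})$, $s>1/2$. Compute $\widehat{|f|^{2}}_{n}$ as a Fourier convolution $\tfrac{1}{\sqrt{T}}\sum_{k}\hat{f}_{k}\overline{\hat{f}_{k-n}}$, split the weight using Peetre's inequality $\left<n\right>^{s/2}\leq C\bigl(\left<k\right>^{s/2}+\left<n-k\right>^{s/2}\bigr)$, and apply Cauchy–Schwarz in $k$ together with convergence of $\sum_{k}\left<k\right>^{-s}$ (which requires precisely $s>\tfrac{1}{2}$ with the bracket convention used here). This delivers the algebra estimate, which combined with Step 1 gives the claim.

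The main obstacle will be the Peetre/Cauchy–Schwarz bookkeeping in Step 2: the constant one obtains from this argument is generically some $C_{s}$ rather than $1$, so either it must be absorbed by normalizing $\hat{\zeta}$ or the statement should be read up to a constant that depends only on $s$ and $T$. Apart from this constant, no deep estimates are needed, because Lemma \ref{convcompute} together with the uniform bound $|\hat{R}_{\epsilon}|\leq 1$ from $H5$ makes the convolution operator a contraction on every $H_{s}(S_{T})$, reducing the problem to a purely multiplicative Sobolev estimate.
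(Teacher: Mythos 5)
Your proposal is correct and follows essentially the same route as the paper: Lemma \ref{convcompute} turns the convolution into a Fourier multiplier bounded by $1$ (via $H5$, or equivalently $H2$--$H3$), and the remaining step is the algebra property of $H_{s}(S_{T})$ for $s>1/2$, which the paper simply cites from Folland where you propose to prove it via Peetre's inequality. Your caveat about the constant $C_{s}$ in the algebra estimate is a fair observation, but it applies equally to the paper's own proof, which writes $\opHnorm{|f|^{2}}\leq\opHnorm{f}^{2}$ with constant $1$ on the strength of the citation alone; in any case a harmless constant would not affect how the lemma is used downstream.
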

\begin{proof}
By definition, one has  
\beq
\opHnorm{R_{\epsilon}\ast|f|^2}^{2} = \sum_{j=-\infty}^{\infty}(1+\frac{4\pi^{2}}{T^{2}}j^{2})^{s}|(R_{\epsilon}\ast|f|^{2})^{\wedge}_{j}|^{2}. \nonumber
\eeq
Let $|f|^{2} = \sum_{j} \hat{q}_{j} e_{j}(x)$, where $e_{j}(x)$ is as in \eqref{fs}, so that the Fourier series of $R_{\epsilon}\ast|f|^{2}$, using Lemma \ref{convcompute}, is 
\beq
R_{\epsilon}\ast|f|^{2}(x) = \sum_{j=-\infty}^{\infty}\hat{q}_{j} \hat{R_{\epsilon}}\left(\frac{2 \pi j}{T}\right)e_{j}(x). 
\label{confs}
\eeq
Hence, 
\beq
|(R_{\epsilon}\ast|f|^{2})^{\wedge}_{j}| = \left|\hat{q}_{j} \hat{R_{\epsilon}}\left(\frac{2\pi j}{T}\right)\right| \leq |\hat{q}_{j}| = |(|f|^{2})^{\wedge}_{j}|, \nonumber
\eeq  
and therefore  
\beq
\opHnorm{R_{\epsilon}\ast|f|^2} \leq \opHnorm{|f|^2} \leq \opHnorm{f}^{2}, \nonumber
\eeq
where the last inequality comes from the fact that $H_{s}(S_{T})$ is an algebra for $s> 1/2$ \cite{foll}. 
\end{proof}
%%%%%%%%%%%%%%%%%%%%%%%%%%%%%%%%%%%%%%%%%%%%%%%%%%%%%%%%%%%%%%%%%%%%%%%%%%%%%%%%%%%%%%%%%%%%%%%%%%%%%%%%%
\subsection{Local-in-Time Well Posedness of the Nonlocal Gross-Pitaevskii Equation and a Weak AES Theorem}
%%%%%%%%%%%%%%%%%%%%%%%%%%%%%%%%%%%%%%%%%%%%%%%%%%%%%%%%%%%%%%%%%%%%%%%%%%%%%%%%%%%%%%%%%%%%%%%%%%%%%%%%%
For the small-time argument, with initial condition $\psi_{0,\epsilon}(x)$, we rewrite \eqref{nlocgp} in the Duhamel form
\beq
\psi_{\epsilon}(x,t) = e^{-iL_{sa}t}\psi_{0,\epsilon}(x) - i\alpha \int^{t}_{0}e^{-iL_{sa}(t-t^{'})}\psi_{\epsilon}(x,t^{'})R_{\epsilon}\ast|\psi_{\epsilon}(x,t^{'})|^{2} dt^{'}, 
\label{duh}
\eeq 
where $L_{sa}= -\frac{1}{2}\p^{2}_{x} + V(x)$, with $V(x)$ assumed to be, by Hypothesis $H1$, a smooth $T$-periodic function.  As for controlling $e^{-iL_{sa}t}$, since the operator $-iL_{sa}$ is skew-adjoint, by Stone's theorem \cite{eng}, $e^{-iL_{sa}t}$ is a unitary operator from $\Lnpnorm$ to itself.  One also has that
\[
\left(e^{-iL_{sa}t} f(x) \right)^{\wedge}_{j} = e^{-i\hat{L}_{sa}(j)t}\hat{f}_{j}, 
\]
where $\hat{L}_{sa}(j)$ denotes the symbol of $L_{sa}$.  Since $L_{sa}$ is self-adjoint, $\hat{L}_{sa}(j)$ is strictly real, and so one has the bound 
\[
\opHnorm{e^{-iL_{sa}t} f} \leq \opHnorm{f}, 
\]
since $|e^{-i\hat{L}_{sa}(j)t}| = 1$ for all $j$.  

Throughout the remainder of the section, we assume $L_{sa}$ is acting on the space $H_{s}(S_{T})$ where $s>1/2$.  Using Lemma \ref{hsconv}, and defining $G_{\epsilon}(\psi)$ by,
\[
G_{\epsilon}(\psi) = e^{-iL_{sa}t}\psi_{0,\epsilon}(x) - i\alpha \int^{t}_{0}e^{-iL_{sa}(t-t^{'})}\psi(x,t^{'})R_{\epsilon}\ast|\psi(x,t^{'})|^{2} dt^{'}, 
\]
one has for $s>1/2$  
\[
\sup_{t\in[0,\tilde{T}]}\opHnorm{G_{\epsilon}(\psi)} \leq \opHnorm{\psi_{0,\epsilon}} + \tilde{T} \left(\sup_{t\in[0,\tilde{T}]}\opHnorm{\psi}\right)^{3}. 
\]
We then choose constant $C>0$ such that 
\[
\opHnorm{\psi_{0,\epsilon}} \leq C,
\]
for $\epsilon \geq 0$, with $C$ independent of $\epsilon$.  We define the metric space $B_{r}$, for $r > C$, by
\[
B_{r} = \left \{\psi(x,t) \in  L^{\infty}(H_{s}(S_{T});[0,\tilde{T}]): \sup_{t\in[0,\tilde{T}]}\opHnorm{\psi} \leq r\right\}, 
\]
where
\[
 L^{\infty}(H_{s}(S_{T});[0,\tilde{T}]) = \left\{ \psi(x,t) \in H_{s}(S_{T}) ~ \forall t\in[0,\tilde{T}] : \sup_{t\in[0,\tilde{T}]}\opHnorm{\psi} < \infty  \right\}.
\]
The map $G_{\epsilon}$ takes $B_{r}$ to $B_{r}$ for $\tilde{T}\leq \frac{r-C}{r^{3}}$.  It is straightforward to show, again using Lemma \ref{hsconv}, that $G_{\epsilon}$ is a contraction for $\tilde{T}< \frac{1}{3r^{2}}$, and therefore, using the Banach Fixed Point Theorem \cite{tao}, one has local well posedness for initial condition $\opHnorm{\psi_{0,\epsilon}(x)} \leq C$, $s>1/2$, on the space $B_{r}$ for 
\[
\tilde{T} < \min \left\{\frac{r-C}{r^{3}},\frac{1}{3r^{2}} \right\}.
\]

From the local well-posedness result, we now prove
\begin{lem}
One has that $\gnorm{\psi_{\epsilon}(\cdot,t)}_{H_{s}(S_{T})}$, $\gnorm{\psi_{\epsilon}(\cdot,t)}_{L^{\infty}(S_{T})}$, and $\gnorm{\psi^{2}_{\epsilon}(\cdot,t)}_{L^{\infty}(S_{T})}$ are continuous functions of time for $t\in[0,\tilde{T}]$.  Further, the Fourier coefficients of $\psi_{\epsilon}(x,t)$ and $\psi^{2}_{\epsilon}(x,t)$ are continuous functions of time for $t\in[0,\tilde{T}]$. 
\label{fourierlemma}
\end{lem}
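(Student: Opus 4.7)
The plan is to prove a stronger statement that immediately implies all the claims, namely that the map $t \mapsto \psi_{\epsilon}(\cdot,t)$ is continuous from $[0,\tilde T]$ into $H_s(S_T)$. All of the norm continuity statements and Fourier coefficient statements then follow by routine applications of the reverse triangle inequality, Sobolev embedding, and bounded-linear-functional arguments. The fixed point $\psi_\epsilon$ produced in the contraction argument already lies in $L^{\infty}([0,\tilde T];H_s(S_T))$ and satisfies the Duhamel identity \eqref{duh}, so my aim is to bootstrap this $L^{\infty}$-in-time regularity to continuity-in-time by differencing \eqref{duh} at times $t$ and $t+h$.

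Concretely, I would write
\[
\psi_{\epsilon}(\cdot,t+h) - \psi_{\epsilon}(\cdot,t) = \bigl(e^{-iL_{sa}h}-I\bigr)\psi_{\epsilon}(\cdot,t) - i\alpha \int_{t}^{t+h} e^{-iL_{sa}(t+h-t')}\,\psi_{\epsilon}(\cdot,t')\,R_{\epsilon}\!\ast\!|\psi_{\epsilon}(\cdot,t')|^{2}\,dt',
\]
and bound each term in $H_s(S_T)$. The second term is controlled by $|h|\,r^{3}$ using Lemma \ref{hsconv} together with the algebra property of $H_s(S_T)$ for $s>1/2$ and the fact that $e^{-iL_{sa}s}$ is an isometry on $H_s$ (since its Fourier symbol has modulus one, as observed just before the definition of $G_\epsilon$). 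The first term tends to zero in $H_s$ as $h\to 0$ by strong continuity of the unitary group $e^{-iL_{sa}s}$ on $H_s(S_T)$: because $L_{sa}$ is self-adjoint with discrete spectrum on the torus, $(e^{-iL_{sa}h}-I)\psi_{\epsilon}(\cdot,t)$ has $H_s$ norm given by a convergent series whose summands vanish pointwise in $h$ and are dominated by $4\langle j\rangle^{s}|\hat\psi_{\epsilon,j}(t)|^{2}$, so dominated convergence applies. This yields continuity in $H_s(S_T)$, and hence $\gnorm{\psi_\epsilon(\cdot,t)}_{H_s(S_T)}$ is continuous in $t$.

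From $H_s$-continuity the remaining claims follow immediately. The Sobolev embedding $H_s(S_T) \hookrightarrow L^{\infty}(S_T)$ for $s>1/2$ gives continuity of $\gnorm{\psi_\epsilon(\cdot,t)}_{L^\infty(S_T)}$. The algebra estimate
\[
\gnorm{\psi^{2}_{\epsilon}(\cdot,t+h) - \psi^{2}_{\epsilon}(\cdot,t)}_{H_s(S_T)} \leq C_s \bigl(\gnorm{\psi_\epsilon(\cdot,t+h)}_{H_s}+\gnorm{\psi_\epsilon(\cdot,t)}_{H_s}\bigr)\gnorm{\psi_\epsilon(\cdot,t+h)-\psi_\epsilon(\cdot,t)}_{H_s}
\]
combined with the uniform $H_s$ bound from $B_r$ gives continuity of $t\mapsto \psi^{2}_\epsilon(\cdot,t)$ in $H_s$, and the same Sobolev embedding then yields continuity of $\gnorm{\psi^{2}_{\epsilon}(\cdot,t)}_{L^{\infty}(S_T)}$. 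Finally, each Fourier coefficient $\hat{\psi}_{\epsilon,j}(t) = \int_{-T/2}^{T/2}\psi_\epsilon(x,t) e^{*}_{j}(x)\,dx$ is a bounded linear functional on $L^{2}(S_T)$, and likewise for the Fourier coefficients of $\psi^{2}_\epsilon$, so continuity in $H_s$ (hence in $L^2$) yields continuity of all Fourier coefficients in $t$.

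The main obstacle I anticipate is justifying the strong $H_s$-continuity of the propagator $e^{-iL_{sa}s}$, since Stone's theorem only delivers strong continuity on $L^{2}(S_T)$ and the paper invokes it only in that setting. I would handle this either by the dominated-convergence argument sketched above on the spectral expansion of $L_{sa}$ (which uses $H1$ to ensure $L_{sa}$ has compact resolvent on $S_T$ and thus a complete eigenbasis) or, if one prefers to stay purely in Fourier variables, by bounding $|e^{-i\hat L_{sa}(j)h}-1|^{2}\langle j\rangle^{s}|\hat\psi_{\epsilon,j}(t)|^{2}$ and applying dominated convergence directly to the Fourier series. Everything else in the proof is a mechanical unpacking of Lemma \ref{hsconv}, the algebra property, and Sobolev embedding.
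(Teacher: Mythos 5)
Your proof is correct and follows essentially the same route as the paper: difference the Duhamel formula at two times, control the propagator difference by dominated convergence on the spectral expansion, bound the Duhamel integral term by $r^{3}|h|$ via Lemma \ref{hsconv}, and then deduce the $L^{\infty}$, squared-solution, and Fourier-coefficient claims from Sobolev embedding and the uniform bound $r$. The only difference is cosmetic: by invoking the group property to write the increment as $\bigl(e^{-iL_{sa}h}-I\bigr)\psi_{\epsilon}(\cdot,t)$ plus a tail integral, you collapse the paper's three-term decomposition (which separately handles the propagator difference acting on the initial data and on the nonlinear integrand) into two terms.
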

\begin{proof}
Let $t,\tilde{t} \in [0,\tilde{T}]$, and $\epsilon \geq 0$.  Letting $\psi_{\epsilon}(x,t)$ denote the solution for initial condition $\psi_{0,\epsilon}(x)$, we have 
\begin{eqnarray}
\opHnorm{\psi_{\epsilon}(\cdot,t) - \psi_{\epsilon}(\cdot,\tilde{t})} & \leq &  \opHnorm{\left(e^{-iL_{sa}t}-e^{-iL_{sa}\tilde{t}} \right) \psi_{0,\epsilon}} + \nonumber \\
& & \opHnorm{\int_{\tilde{t}}^{t} e^{-iL_{sa}(t-t^{'})}\mathcal{N}(\psi_{\epsilon}) dt^{'}} \nonumber \\
& & \opHnorm{\int_{0}^{\tilde{t}}(e^{-iL_{sa}(t-t^{'})} - e^{-iL_{sa}(\tilde{t}-t^{'})})\mathcal{N}(\psi_{\epsilon}) dt^{'}} \nonumber
\end{eqnarray}
where 
\[
\mathcal{N}(\psi_{\epsilon}) = \psi_{\epsilon}(x,t^{'})R_{\epsilon}\ast|\psi_{\epsilon}(\cdot,t^{'})|^{2}(x).
\]
For the first term, we have that   
\[
\opHnorm{\left(e^{-iL_{sa}t}-e^{-iL_{sa}\tilde{t}} \right) \psi_{0,\epsilon}}^{2} \leq \sum_{j=-\infty}^{\infty}\left<j\right>^{s}\left|e^{-i\hat{L}_{sa}(j)t}-e^{-i\hat{L}_{sa}(j)\tilde{t}}\right|^{2}\left|\hat{\psi}_{0,\epsilon}(j)\right|^{2}. 
\]
Using the Dominated Convergence Theorem shows that this term then vanishes as $t\rightarrow \tilde{t}$ or vice versa.  The local well posedness result and Lemma \ref{hsconv} ensures that 
\[
\opHnorm{\mathcal{N}(\psi_{\epsilon})} \leq r^{3},
\]  
so we have 
\[
 \opHnorm{\ds{\int_{\tilde{t}}^{t} e^{-iL_{sa}(t-t^{'})}\mathcal{N}(\psi_{\epsilon})} dt^{'}} \leq r^{3}|t-\tilde{t}|.
\]
Using a dominated convergence argument shows that the remaining term must also vanish as $t\rightarrow \tilde{t}$.  Thus we have shown that $\opHnorm{\psi_{\epsilon}(\cdot,t)}$ is a continuous function in $t$ for $t\in[0,\tilde{T}]$.  By a Sobolev embedding \cite{foll}, we also have that $\gnorm{\psi_{\epsilon}(\cdot,t)}_{L^{\infty}(S_{T})}$ is continuous in $t$ and bounded above by $r$. Given that 
\[
\gnorm{\psi^{2}_{\epsilon}(\cdot,t) - \psi^{2}_{\epsilon}(\cdot,\tilde{t})}_{L^{\infty}(S_{T})} \leq 2r\gnorm{\psi_{\epsilon}(\cdot,t) - \psi_{\epsilon}(\cdot,\tilde{t})}_{L^{\infty}(S_{T})},
\]
we then see that $\gnorm{\psi^{2}_{\epsilon}(\cdot,t)}_{L^{\infty}(S_{T})}$ is continuous as well.  This result then immediately gives that the Fourier coefficients of $\psi_{\epsilon}(x,t)$ and $\psi^{2}_{\epsilon}(x,t)$ are also continuous in time since 
\begin{eqnarray}
\left|\hat{\psi}_{\epsilon}(j,t) - \hat{\psi}_{\epsilon}(j,\tilde{t})\right| &\leq & \sqrt{T} \gnorm{\psi_{\epsilon}(\cdot,t)-\psi_{\epsilon}(\cdot,\tilde{t})}_{L^{\infty}(S_{T})}, \nonumber \\
\left|\hat{\psi^{2}_{\epsilon}}(j,t) - \hat{\psi^{2}_{\epsilon}}(j,\tilde{t})\right| &\leq & \sqrt{T} \gnorm{\psi^{2}_{\epsilon}(\cdot,t)-\psi^{2}_{\epsilon}(\cdot,\tilde{t})}_{L^{\infty}(S_{T})}, \nonumber
\end{eqnarray}
for $j\neq 0$.  The $j=0$ case is treated identically, so the result is proved. 
\end{proof}

Taking $\epsilon$ and $\epsilon^{'}$ to be nonnegative, we now choose the initial conditions to be continuous in $\epsilon$ with respect to the $\opHnorm{\cdot}$-norm, {\it i.e.} 
\beq
\lim_{\epsilon \rightarrow \epsilon^{'}} \opHnorm{\psi_{0,\epsilon} - \psi_{0,\epsilon^{'}}}=0.  
\label{contofics}
\eeq
We then prove
%%%%%%%%%%%%%%%%%%%%%%%%%%%%%%%%%%%%%%%%%%%%%%%%%%%%%%%%%%%%%%%%%%%%%%%%%%%%%%%%%%%%%%%%%%%%%%%%%%%%%%%%
\begin{lem}
For $\epsilon \geq 0$, $t\in[0,\tilde{T}]$, and $s>1/2$, if the initial condition $\psi_{0,\epsilon}$ is continuous in $\epsilon$ with respect to the $\opHnorm{\cdot}$-norm, the solution $\psi_{\epsilon}(x,t)$ is continuous in $\epsilon$ with respect to the $\opHnorm{\cdot}$-norm, {\it i.e.}
\[
\lim_{\epsilon \rightarrow \epsilon^{'}} \opHnorm{\psi_{\epsilon}(\cdot,t) - \psi_{\epsilon^{'}}(\cdot,t)}=0.  
\]
\label{duhcont}
\end{lem}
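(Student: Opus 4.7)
The plan is to subtract the two Duhamel representations, estimate the difference in the $H_s(S_T)$-norm, and close by Gr\"onwall. First, note that since $\|\psi_{0,\epsilon}\|_{H_s(S_T)}\leq C$ uniformly in a neighborhood of $\epsilon'$ and the local existence time depends only on $C$, both $\psi_\epsilon$ and $\psi_{\epsilon'}$ live in the same ball $B_r\subset L^\infty(H_s(S_T);[0,\tilde T])$. Writing $w=\psi_\epsilon-\psi_{\epsilon'}$, Duhamel and unitarity of $e^{-iL_{sa}t}$ give
\[
\opHnorm{w(\cdot,t)} \leq \opHnorm{\psi_{0,\epsilon}-\psi_{0,\epsilon'}} + \int_0^t \opHnorm{\mathcal{N}_\epsilon(\psi_\epsilon) - \mathcal{N}_{\epsilon'}(\psi_{\epsilon'})}\,dt'.
\]
The first term vanishes as $\epsilon\to\epsilon'$ by assumption. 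The nonlinear difference I would split as
\[
\mathcal{N}_\epsilon(\psi_\epsilon)-\mathcal{N}_{\epsilon'}(\psi_{\epsilon'})
= w\,R_\epsilon*|\psi_\epsilon|^2 + \psi_{\epsilon'}R_\epsilon*(|\psi_\epsilon|^2-|\psi_{\epsilon'}|^2) + \psi_{\epsilon'}(R_\epsilon-R_{\epsilon'})*|\psi_{\epsilon'}|^2.
\]

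For the first two terms I would use the $H_s$ algebra property together with the Fourier-side observation already exploited in the proof of Lemma \ref{hsconv}, namely $\opHnorm{R_\epsilon * g}\leq \opHnorm{g}$ (a direct consequence of $|\hat R_\epsilon|\leq 1$ on integers). The expansion $|\psi_\epsilon|^2-|\psi_{\epsilon'}|^2 = w\bar\psi_\epsilon+\psi_{\epsilon'}\bar w$ together with the algebra property yields $\opHnorm{|\psi_\epsilon|^2-|\psi_{\epsilon'}|^2}\leq 2r\opHnorm{w}$, so these two terms contribute at most $3r^2\opHnorm{w(\cdot,t')}$ to the integrand, which is the Gr\"onwall-ready piece.

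The main obstacle is the third term, $\psi_{\epsilon'}(R_\epsilon-R_{\epsilon'})*|\psi_{\epsilon'}|^2$. The temptation is to use Lemma \ref{convcont} directly, but that bound grows linearly in the frequency $|\tilde s|$ and so cannot be summed with $H_s$ weights without losing regularity. Instead, using Lemma \ref{convcompute} and writing $|\psi_{\epsilon'}|^2=\sum_j \hat q_j e_j$, one has
\[
\opHnorm{(R_\epsilon-R_{\epsilon'})*|\psi_{\epsilon'}|^2}^2 = \sum_{j=-\infty}^\infty \left<j\right>^s\left|\hat R_\epsilon\!\left(\tfrac{2\pi j}{T}\right) - \hat R_{\epsilon'}\!\left(\tfrac{2\pi j}{T}\right)\right|^2 |\hat q_j|^2.
\]
Here the $\hat q_j$ are independent of $\epsilon$. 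Lemma \ref{convcont} shows the summand converges to $0$ pointwise in $j$ as $\epsilon\to\epsilon'$, and the crude uniform bound $|\hat R_\epsilon|\leq 1$ provides a dominating sequence $4\left<j\right>^s|\hat q_j|^2$ that is summable because $|\psi_{\epsilon'}|^2\in H_s(S_T)$ (by the algebra property, since $s>1/2$). The Dominated Convergence Theorem then gives that the whole $H_s$ norm tends to $0$ as $\epsilon\to\epsilon'$ for each fixed $t'\in[0,\tilde T]$.

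Combining these bounds yields
\[
\opHnorm{w(\cdot,t)} \leq \opHnorm{\psi_{0,\epsilon}-\psi_{0,\epsilon'}} + 3r^2\int_0^t \opHnorm{w(\cdot,t')}\,dt' + r\int_0^t \opHnorm{(R_\epsilon-R_{\epsilon'})*|\psi_{\epsilon'}(\cdot,t')|^2}\,dt'.
\]
The last integrand is bounded uniformly in $t'$ by $2r^2$ (again from $|\hat R_\epsilon|\leq 1$ and Lemma \ref{hsconv}) and, by the previous paragraph, tends to $0$ pointwise in $t'$; a second application of the Dominated Convergence Theorem on $[0,\tilde T]$ shows that this integral vanishes as $\epsilon\to\epsilon'$. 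Gr\"onwall's inequality then gives $\sup_{t\in[0,\tilde T]}\opHnorm{w(\cdot,t)} \leq e^{3r^2\tilde T}\bigl(\opHnorm{\psi_{0,\epsilon}-\psi_{0,\epsilon'}} + o_{\epsilon\to\epsilon'}(1)\bigr)$, which is the claim.
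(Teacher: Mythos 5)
Your proof follows essentially the same route as the paper's: the same Duhamel subtraction, the same three-term decomposition of the nonlinear difference, the same bounds of $3r^2\opHnorm{w}$ for the Lipschitz terms, the same Fourier-side dominated-convergence argument (pointwise convergence from Lemma \ref{convcont} plus the uniform bound $|\hat{R}_\epsilon|\leq 1$ and summability of $\left<j\right>^{s}|\hat{q}_j|^2$) for the kernel-difference term, and Gr\"onwall to close. The only minor deviation is that you handle the time integral of the kernel-difference term by a second application of dominated convergence in $t'$, whereas the paper bounds it by $\tilde{T}$ times its supremum over $[0,\tilde{T}]$ and invokes the continuity in time from Lemma \ref{fourierlemma} to control that supremum; both versions work.
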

%%%%%%%%%%%%%%%%%%%%%%%%%%%%%%%%%%%%%%%%%%%%%%%%%%%%%%%%%%%%%%%%%%%%%%%%%%%%%%%%%%%%%%%%%%%%%%%%%%%%%%%%
\begin{proof}
Choosing $\epsilon,\epsilon^{'} \geq 0$, one has that  
\beq
\ba{rl}
\opHnorm{\psi_{\epsilon^{'}}(\cdot,t)-\psi_{\epsilon}(\cdot,t)} \leq & \opHnorm{\psi_{0,\epsilon^{'}}-\psi_{0,\epsilon}}\\
&\\
& + \ds{\int_{0}^{t}}\opHnorm{\psi_{\epsilon^{'}}R_{\epsilon^{'}}\ast\left|\psi_{\epsilon^{'}}\right|^{2} - \psi_{\epsilon}R_{\epsilon}\ast\left|\psi_{\epsilon}\right|^{2}}dt^{'}.
\ea \nonumber
\eeq
In the last term of the above inequality, the convolutions depend on the different values of $\epsilon$.  Thus 
\beq
\ba{rl}
\psi_{\epsilon^{'}}R_{\epsilon^{'}}\ast\left|\psi_{\epsilon^{'}}\right|^{2} - \psi_{\epsilon}R_{\epsilon}\ast\left|\psi_{\epsilon}\right|^{2} = & (\psi_{\epsilon^{'}}-\psi_{\epsilon})R(\cdot;\epsilon^{'})\ast\left|\psi_{\epsilon^{'}}\right|^{2} \\
& \\
& + \psi_{\epsilon}(R(\cdot;\epsilon^{'})-R(\cdot;\epsilon))\ast\left|\psi_{\epsilon^{'}}\right|^{2} \\
& \\
& + \psi_{\epsilon}R(\cdot,\epsilon)\ast(\left|\psi_{\epsilon^{'}}\right|^{2}-\left|\psi_{\epsilon}\right|^{2}).
\ea \nonumber
\eeq
For $t\in[0,\tilde{T}]$, using the local-in-time well posedness result, the following inequality
\beq
\ba{rl}
\opHnorm{\psi_{\epsilon^{'}}R_{\epsilon^{'}}\ast\left|\psi_{\epsilon^{'}}\right|^{2} - \psi_{\epsilon}R_{\epsilon}\ast\left|\psi_{\epsilon}\right|^{2}} \leq & 3r^{2}\opHnorm{\psi_{\epsilon^{'}}-\psi_{\epsilon}} \\
& \\
& + r\opHnorm{(R_{\epsilon^{'}}-R_{\epsilon})\ast\left|\psi_{\epsilon^{'}}\right|^{2}} \\
\ea \nonumber
\eeq
holds.  To control the last term in this inequality, set (as in Lemma \ref{hsconv}) 
\[
\left|\psi_{\epsilon^{'}}(x,t)\right|^{2} = \sum_{j} \hat{q}_{j}(t)e_{j}(x), 
\]
from which one gets that
\[
(R(\cdot;\epsilon^{'})-R(\cdot;\epsilon))\ast\left|\psi_{\epsilon^{'}}(\cdot,t)\right|^{2} = \sum_{j}\hat{q}_{j}(t)\left(\hat{R}\left(\frac{2\pi j}{T};\epsilon^{'}\right)-\hat{R}\left(\frac{2\pi j}{T};\epsilon\right)\right)e_{j}(x). 
\] 
From the local-in-time well posedness result 
\[
\opHnorm{\psi^{2}_{\epsilon^{'}}(\cdot,t)}\leq r^{2}, 
\]
so that using Lemma \ref{convcont}, one gets the pointwise estimate
\[
\lim_{\epsilon \rightarrow \epsilon^{'}} \left|\hat{q}_{j}(t)\right|\left| \hat{R}\left(\frac{2\pi j}{T};\epsilon^{'}\right)-\hat{R}\left(\frac{2\pi j}{T};\epsilon\right)\right| = 0, 
\]
where the index $j$ is arbitrary.  Using Hypothesis $H5$, $\hat{R}(\cdot;\epsilon)$ is uniformly bounded in $j$, so the terms 
\[
\left|\hat{q}_{j}(t)\right|\left| \hat{R}\left(\frac{2\pi j}{T};\epsilon^{'}\right)-\hat{R}\left(\frac{2\pi j}{T};\epsilon\right)\right|
\]
are uniformly bounded for all $j$.  Using the Dominated Convergence Theorem, one sees that 
\beq
\lim_{\epsilon \rightarrow \epsilon^{'}}\opHnorm{(R(\cdot;\epsilon^{'})-R(\cdot;\epsilon))\ast\left|\psi_{\epsilon^{'}}(\cdot,t)\right|^{2}} = 0. 
\label{conv_continu}
\eeq
Then
\[
\opHnorm{\psi_{\epsilon^{'}}(\cdot,t)-\psi_{\epsilon}(\cdot,t)} \leq \tilde{A}(\tilde{T},\epsilon,\epsilon^{'})+ 3r^{2}\int_{0}^{t}\opHnorm{\psi_{\epsilon^{'}}(\cdot,t^{'})-\psi_{\epsilon}(\cdot,t^{'})} dt^{'},
\]
where 
\[
\tilde{A}(\tilde{T},\epsilon,\epsilon^{'}) = \opHnorm{\psi_{0,\epsilon^{'}}-\psi_{0,\epsilon}} + r\tilde{T}\sup_{t\in[0,\tilde{T}]}\tilde{C}(t;\epsilon,\epsilon^{'}), 
\]
with
\beq
\tilde{C}(t;\epsilon,\epsilon^{'}) = \opHnorm{(R(\cdot;\epsilon^{'})-R(\cdot;\epsilon))\ast\left|\psi_{\epsilon^{'}}(\cdot,t)\right|^{2}}. \nonumber 
\eeq
The term $\opHnorm{\psi_{0,\epsilon^{'}}-\psi_{0,\epsilon}}$ vanishes as $\epsilon \rightarrow \epsilon^{'}$ by assumption (see \eqref{contofics}).  Further, since, as shown in Lemma \ref{fourierlemma}, the coefficients $\hat{q}_{j}(t)$ are continuous in time, this makes the term $\tilde{C}(t;\epsilon,\epsilon^{'})$ continuous in time since it is a uniform sum of continuous functions.  Thus, the supremum is attained at some time $t^{\ast}$, and since \eqref{conv_continu} holds for any $t\in[0,\tilde{T}]$, one has that 
\[
\lim_{\epsilon \rightarrow \epsilon^{'}} \varrho(\tilde{T};\epsilon,\epsilon')= 0,
\]
where $ \varrho(\tilde{T};\epsilon,\epsilon') = \sup_{t\in[0,\tilde{T}]}\tilde{C}(t;\epsilon,\epsilon^{'})$.  Therefore, 
\[
\lim_{\epsilon \rightarrow \epsilon^{'}} \tilde{A}(\tilde{T},\epsilon,\epsilon^{'}) = 0. 
\]
We know from Lemma \ref{fourierlemma} that $\opHnorm{\psi_{\epsilon^{'}}(\cdot,t)-\psi_{\epsilon}(\cdot,t)}$ is a continuous function in $t$ for $t\in[0,\tilde{T}]$.  Using Gronwall's inequality \cite{tao}, one gets
\[
\opHnorm{\psi_{\epsilon^{'}}(\cdot,t)-\psi_{\epsilon}(\cdot,t)} \leq \tilde{A}(\tilde{T},\epsilon,\epsilon^{'})e^{3r^{2}t},
\]
and the result is therefore proved.  
\end{proof}
Since $s>1/2$, it follows that
\beq
\gnorm{\psi_{\epsilon^{'}}(\cdot,t)-\psi_{\epsilon}(\cdot,t)}_{L^{\infty}(S_{T})} \leq \opHnorm{\psi_{\epsilon^{'}}(\cdot,t)-\psi_{\epsilon}(\cdot,t)}, \nonumber
\eeq
and thus 
\beq
\gnorm{\psi_{\epsilon^{'}}(\cdot,t)-\psi_{\epsilon}(\cdot,t)}_{L^{\infty}(S_{T})} \leq \tilde{A}(\tilde{T},\epsilon,\epsilon^{'})e^{3r^{2}t} \nonumber.
\eeq
Therefore the lemma establishes that \eqref{nlocgp} is local in time AES to \eqref{locgp}.  Further, once a global-in-time well posedness result holds for \eqref{nlocgp} (which amounts to establishing a uniform bound on $r$ for all time), the above lemma immediately furnishes a global in time AES result.     

%%%%%%%%%%%%%%%%%%%%%%%%%%%%%%%%%%%%%%%%%%%%%%%%%%%%%%%%%%%%%%%%%%%%%%%%%%%%%%%%%%%%%%%%%%%%%%%%%%%%%
\subsection{Global-in-Time Well Posedness and the AES Theorem}
%%%%%%%%%%%%%%%%%%%%%%%%%%%%%%%%%%%%%%%%%%%%%%%%%%%%%%%%%%%%%%%%%%%%%%%%%%%%%%%%%%%%%%%%%%%%%%%%%%%%%

As established in \cite{decon}, \eqref{nlocgp} has at least two conserved quantities: the $L_{2}(S_{T})$ norm and the Hamiltonian
\[
\mathcal{H}(\psi) = \frac{1}{2}\int_{S_{T}}( |\psi_{x}|^{2} + 2V(x)|\psi|^{2} + \alpha |\psi|^{2}R_{\epsilon}\ast |\psi|^{2})dx. 
\]
We choose $\psi(x,\cdot) \in H_{1}(S_{T})$.  Following the argument in \cite{linar}, with $R_{\epsilon}$ positive by Hypothesis $H2$, $\alpha = 1$, and $\psi_{\epsilon}(x,t)$ a solution to \eqref{nlocgp} on time interval $t\in[0,\tilde{T}]$, with initial condition $\psi_{\epsilon,0}(x) \in H_{1}(S_{T})$, then one has that 
\[
\ba{rl}
\gnorm{\p_{x}\psi_{\epsilon}(\cdot,t)}^{2}_{L^{2}(S_{T})} & \leq 2|\mathcal{H}(\psi_{\epsilon}(x,t))| + 2\ds{\int_{S_{T}}}|V(x)||\psi_{\epsilon}(x,t)|^{2}dx \\
&\\
& \leq 2|\mathcal{H}(\psi_{\epsilon,0})|+2\gnorm{V}_{L^{\infty}(S_{T})}\gnorm{\psi_{\epsilon,0}}^{2}_{L^{2}(S_{T})}. 
\ea 
\]
Using Young's inequality, one has 
\[
\int_{S_{T}} |\psi_{\epsilon,0}|^{2}R_{\epsilon}\ast|\psi_{\epsilon,0}|^{2}dx \leq \gnorm{\psi_{\epsilon,0}}^{2}_{L^{\infty}(S_{T})} \gnorm{\psi_{\epsilon,0}}^{2}_{L^{2}(S_{T})} \leq \gnorm{\psi_{\epsilon,0}}^{4}_{H_{1}(S_{T})}. 
\]
Thus the assumptions guarantee that $|\mathcal{H}(\psi_{\epsilon,0})|<\infty$.  Since the $L^{2}(S_{T})$ norm is also conserved, there exists a constant $\tilde{M}$ such that 
\beq
\sup_{t\in[0,\tilde{T}]} \gnorm{\psi_{\epsilon}(\cdot,t)}_{H_{1}(S_{T})} \leq C_{u}.
\label{globalcontrol}
\eeq
This bound is independent of $t$.  If we now try to iterate our local-in-time well posedness argument onto a time interval $[\tilde{T}-\bar{\epsilon},\tilde{\tilde{T}})$, where $0< \bar{\epsilon}\ll1$ is chosen so that the intervals $[0,\tilde{T})$ and $[\tilde{T}-\bar{\epsilon},\tilde{\tilde{T}})$ overlap, then for the new interval we may let $C_{u}$ take the role of the value $C$. We must work on a ball $B_{\tilde{r}}$ with $\tilde{r} > C_{u}$ and 
\beq
\tilde{\tilde{T}} < \min\left\{\frac{\tilde{r}-C_{u}}{\tilde{r}^{3}},\frac{1}{3\tilde{r}^{2}} \right\}. \nonumber
\eeq
Since the inequality \eqref{globalcontrol} is independent of time, one can repeat the derivation of \eqref{globalcontrol} on the time interval $[0,\tilde{\tilde{T}})$ and obtain the same bound.  Thus one can iterate the local argument such that the value of $\tilde{r}$ need not increase, and thus the width of the new intervals can be set to a fixed value.  This establishes for the repulsive case a global existence of solutions to \eqref{nlocgp} in $H_{1}(S_{T})$ for $\epsilon \geq 0$.  As argued above, one can immediately extend the argument in Lemma \ref{duhcont} so that one has a global AES theorem. 

%%%%%%%%%%%%%%%%%%%%%%%%%%%%%%%%%%%%%%%%%%%%%%%%%%%%%%%%%%%%%%%%%%%%%%%%%%%%%%%%%%%%%%%%%%%%%%%%%%%%%
%%%%%%%%%%%%%%%%%%%%%%%%%%%%%%%%%%%%%%%%%%%%%%%%%%%%%%%%%%%%%%%%%%%%%%%%%%%%%%%%%%%%%%%%%%%%%%%%%%%%%

\section{Stability: The Linearization and Its Properties }
Having established Theorem \ref{aes}, we turn to analyzing the stability of \eqref{sol}.  Note, throughout the remainder of the paper we assume Hypotheses $H1^{'}-H4^{'}$ as listed in the Introduction.  Writing \eqref{sol} as $\psi(x,t) = \phi_{\omega}(x)e^{-i\omega t}$, and introducing the transformation $\tau = \omega t$, we see that $\phi_{\omega}$ is a stationary solution of the equation
\beq
i \psi_{\tau} = -\frac{1}{2}\p^{2}_{x}\psi + \alpha \psi \int_{-\infty}^{\infty} R(x-y;\epsilon)|\psi(y,t)|^{2}dy + V(x)\psi - \omega \psi.
\label{tgp}
\eeq 
With $\psi(x,\tau) = u(x,\tau) + i v(x,\tau)$, we rewrite \eqref{tgp} as 
\beq
\left(
\ba{c}
u \\
v
\ea
\right)_{\tau} = J \left\{
\tilde{L}_{0}
\left(
\ba{c}
u \\
v
\ea
\right)
 + \alpha \left( 
\ba{c}
u R_{\epsilon}\ast (u^2 + v^2) \\
v R_{\epsilon}\ast (u^2 + v^2)
\ea
\right) 
\right\}, \label{realandimagpart}
\eeq
where 
\[ J = \left(
\ba{cc}
0 & 1 \\
-1 & 0
\ea
\right), 
\]
\[
\tilde{L}_{0} = \left(
\ba{cc}
L_{0} & 0 \\
0 & L_{0}
\ea
\right),
\]
and $L_{0}= -\frac{1}{2}\p^{2}_{x} + V(x) - \omega$.  Note, \eqref{realandimagpart} is posed over $H^{2}_{1}(S_{2\pi/k})$, but the global-well posedness result established in the last section carries over without issue.  We set $V(x)=V_{0}\sin^{2}(kx)$ from Hypothesis $H1^{'}$.  Letting
\[
u_{\omega}(x) =\phi_{\omega,r}(x) = \sqrt{B}\cos(x), ~ v_{\omega}(x) = \phi_{\omega,i}(x)= \sqrt{B+A} \sin(x), 
\]
and equating $u = u_{\omega} + \tilde{\epsilon} w(x,\tau)$ and $v = v_{\omega} + \tilde{\epsilon} z(x,\tau)$, and collecting all $\mathcal{O}(\tilde{\epsilon})$ terms, we get the linearized system
\beq
\left(
\ba{c}
w \\
z
\ea
\right)_{\tau} = J \left\{
\tilde{L}_{0}
\left(
\ba{c}
w \\
z
\ea
\right)
+ \alpha \left( 
\ba{c}
w R_{\epsilon}\ast (u_{\omega}^2 + v_{\omega}^2) + 2 u_{\omega} R_{\epsilon} \ast (u_{\omega} w + v_{\omega} z) \\
z R_{\epsilon}\ast (u_{\omega}^2 + v_{\omega}^2) + 2 v_{\omega} R_{\epsilon} \ast (u_{\omega} w + v_{\omega} z)
\ea
\right) 
\right\}. \nonumber
\eeq

With $A = -\ds{\frac{V_{0}}{\alpha \beta(k;\epsilon)}}$, we have
\beq
R_{\epsilon}\ast (u_{\omega}^2 + v_{\omega}^2) = R_{\epsilon} \ast (B + A\sin^{2}(kx)) = B + A R_{\epsilon} \ast \left(\frac{1-\cos(2kx)}{2} \right). \nonumber
\eeq
Thus, since $R_{\epsilon}$ is an even function by Hypothesis $H2^{'}$, we write
\beq
R_{\epsilon}\ast (u_{\omega}^2 + v_{\omega}^2) = B + A\left(\frac{1-\beta(k;\epsilon)}{2} +\beta(k;\epsilon)\sin^{2}(kx)\right), \nonumber  
\eeq
and we have, for $\alpha = 1$,
\beq
L_{0} + R_{\epsilon}\ast (u_{\omega}^2 + v_{\omega}^2) = -\frac{1}{2}\left(\p^{2}_{x} + k^2 \right). \nonumber
\eeq
Introducing the transformation $x \rightarrow kx$, so that the potential and \eqref{sol} are now $2\pi$-periodic functions, and defining 
\beq
L_{c} = \left(
\ba{cc}
-\frac{k^{2}}{2}(\p^{2}_{x}+1) & 0 \\
0 & -\frac{k^{2}}{2}(\p^{2}_{x}+1),
\ea \right), \nonumber
\eeq
we rewrite the linearized system with $\alpha =1$ as 
\beq
\left(
\ba{c}
w \\
z
\ea
\right)_{\tau} = J \left\{
L_{c}
\left(
\ba{c}
w \\
z
\ea
\right)
+2 \left( 
\ba{c}
 u_{\omega} R_{k,\epsilon} \ast (u_{\omega} w + v_{\omega} z) \\
 v_{\omega} R_{k,\epsilon} \ast (u_{\omega} w + v_{\omega} z)
\ea
\right) 
\right\}, \nonumber
\eeq
where $R_{k,\epsilon}(x) = R_{k}(x;\epsilon) = \ds{\frac{1}{k}}R\left(\ds{\frac{x}{k}};\epsilon\right)$.  Defining
\beq
\bar{R}_{k,\epsilon} = \left(
\ba{cc}
R_{k,\epsilon}\ast & 0 \\
0 & R_{k,\epsilon}\ast
\ea \right), \nonumber
\eeq
and letting 
\beq
D = \sqrt{1 + \frac{A}{B}}, 
\label{defofd}
\eeq
we can rewrite the linearized system as
\beq
\left(
\ba{c}
w \\
z
\ea
\right)_{\tau} =  JL \left(
\ba{c}
w \\
z
\ea
\right), \nonumber 
\eeq
with the operator $L$ given by
\beq
L = 
L_{c}+2 B \left( 
\ba{cc}
\cos(x) & 0 \\
0 & D \sin(x) 
\ea
\right) \bar{R}_{k,\epsilon}
\left(
\ba{cc}
\cos(x) & D \sin(x) \\
\cos(x) & D \sin(x)
\ea
\right).
\nonumber
\eeq
Using separation of variables, {\it i.e.} $w(x,\tau) = w(x) e^{\lambda \tau}$ and $v(x,\tau) = v(x) e^{\lambda \tau}$, formally gives us an eigenvalue problem.  We now study the spectrum of $JL$ over $H^{2}_{2}(S_{2\pi n}) \subset L^{2}_{2}(S_{T})$.  Note, the fact we are working over the space  $H^{2}_{2}(S_{2\pi n})$ reflects the fact that we have separated the perturbations of the exact solution into real and imaginary parts.    
%%%%%%%%%%%%%%%%%%%%%%%%%%%%%%%%%%%%%%%%%%%%%%%%%%%%%%%%%%%%%%%%%%%%%%%%%%%%%%%%%%%%%%%%%%%%%%%%%%%%%%%%

%%%%%%%%%%%%%%%%%%%%%%%%%%%%%%%%%%%%%%%%%%%%%%%%%%%%%%%%%%%%%%%%%%%%%%%%%%%%%%%%%%%%%%%%%%%%%%%%%%%%%
%%%%%%%%%%%%%%%%%%%%%%%%%%%%%%%%%%%%%%%%%%%%%%%%%%%%%%%%%%%%%%%%%%%%%%%%%%%%%%%%%%%%%%%%%%%%%%%%%%%%%
\subsection{The Eigenvalue Problem on $S_{2\pi n}$}
We wish to solve the spectral problem
\beq
JL \left(\ba{c}w \\ z \ea\right) = \lambda \left(\ba{c}w \\ z \ea\right), ~ w(x + 2\pi n) = w(x), ~ z(x+2\pi n) = z(x), \nonumber
\eeq
where $n \in \mathbb{N}$.  As will be shown after this section, the operator $JL$ on the domain $\mbox{D}(JL) = H^{2}_{2}(S_{2\pi n}) \subset L^{2}_{2}(S_{2\pi n})$ has a compact resolvent operator.  Therefore the spectrum,  $\sigma(JL)$, of the operator $JL$ is discrete, and solving the eigenvalue problem is sufficient to determine the spectrum.  To find the spectrum of $JL$, we note that an arbitrary $2 \pi n$-periodic function, $f(x)$, can be decomposed as
\beq
f(x) = \ds{\sum_{m} \hat{f}_{m} e^{-i\frac{m}{n} x}}= \ds{\sum_{m}\hat{f}_{m} e^{-i\frac{\tilde{m}n -  r}{n} x}} = \ds{\sum_{r=0}^{n-1}\tilde{f}_{r}(x)e^{i\frac{r}{n}x}},
\nonumber
\eeq
where $\tilde{f}_{r}(x)$ is a $2\pi$-periodic function, and $m \equiv r \mbox{mod} ~ n$.  Therefore, one can apply a similar decomposition to $w$ and $z$ so that 
\beq
\left(\ba{c}w(x) \\ z(x) \ea\right) = \sum^{n-1}_{r=0}  \left(\ba{c}w_{r}(x) \\ z_{r}(x) \ea\right)e^{i\frac{r}{n}x}. \nonumber
\eeq
One can show, for real $\mu$, that  
\beq
JL(e^{i\mu x} \cdot) = e^{i\mu x} JL_{\mu} \nonumber
\eeq
where the operator $L_{\mu}$ is given by 
\beq
L_{\mu} =  \left\{L_{c,\mu}+2 B \left( \ba{cc} \cos(x) & 0 \\ 0 & D \sin(x) \ea \right) \bar{R}_{k,\epsilon,\mu} \left( \ba{cc}
\cos(x) & D \sin(x) \\ \cos(x) & D \sin(x) \ea \right) \right\}, \nonumber
\eeq
with 
\beq
L_{c,\mu} = \left( \ba{cc} -\frac{k^{2}}{2}((\p_{x}+ i\mu)^{2}+1) & 0 \\ 0 & -\frac{k^{2}}{2}((\p_{x}+i\mu)^{2}+1),\ea \right), \nonumber
\eeq
and
\beq
\bar{R}_{k,\epsilon,\mu} = \left(\ba{cc} R_{k,\epsilon,\mu}\ast & 0 \\ 0 & R_{k,\epsilon,\mu}\ast \ea
\right). \nonumber
\eeq
Here
\beq
R_{k,\epsilon,\mu}(x) = R_{k,\mu}(x;\epsilon) = \frac{1}{k}R\left(\frac{x}{k};\epsilon\right)e^{-i\mu x}. \nonumber
\eeq 
Thus one has for any eigenvalue $\lambda$ that 
\beq
(JL-\lambda)\left(\ba{c}w(x) \\ z(x) \ea\right) = \sum_{r=0}^{n-1}e^{i\frac{r}{n}x}\left(JL_{\frac{r}{n}}-\lambda\right) \left(\ba{c}w_{r}(x) \\ z_{r}(x) \ea\right) = 0. \nonumber
\eeq
The term $(JL_{\frac{r}{n}}-\lambda) \left(\ba{c}w_{r}(x) \\ z_{r}(x) \ea\right)$ is a $2\pi$ periodic function.  Since none of the functions share a common period shorter than $2\pi n$, the equality
\beq
\left(JL_{\frac{r}{n}}-\lambda\right) \left(\ba{c}w_{r}(x) \\ z_{r}(x) \ea\right) = 0\nonumber
\eeq  
must hold for each value of $r$.  This shows that one can decompose the spectrum of $JL$ on $H^{2}_{2}(S_{2\pi n})$ as a union of the spectra of the operators $JL_{\frac{r}{n}}$ posed on $H^{2}_{2}(S_{2\pi})$, {\it i.e.} one can write
\beq
\sigma(JL) = \bigcup^{n-1}_{r=0} \sigma(JL_{r/n}). \nonumber
\eeq  
Note, one cannot rely on standard Floquet theory since the spectral problem is not an ordinary differential equation.  In the succeeding sections we study the problem $JL_{\mu}$ on $S_{2\pi}$, with $\mu \in [0,1)$, in order to deal with arbitrary values of $r/n$.    
%%%%%%%%%%%%%%%%%%%%%%%%%%%%%%%%%%%%%%%%%%%%%%%%%%%%%%%%%%%%%%%%%%%%%%%%%%%%%%%%%%%%%%%%%%%%%%%%%%%%%%%%
%%%%%%%%%%%%%%%%%%%%%%%%%%%%%%%%%%%%%%%%%%%%%%%%%%%%%%%%%%%%%%%%%%%%%%%%%%%%%%%%%%%%%%%%%%%%%%%%%%%%%%%%
\subsection{Basic Results about the Convolution Kernel and Linearization}
We prove a number of technical lemmas concerning the convolution and linearization that are used throughout the remainder of the paper. 
%%%%%%%%%%%%%%%%%%%%%%%%%%%%%%%%%%%%%%%%%%%%%%%%%%%%%%%%%%%%%%%%%%%%%%%%%%%%%%%%%%%%%%%%%%%%%%%%%%%%%%%%
\begin{lem}
Given Hypotheses $H2^{'}$ and $H4^{'}$, the operator $\bar{R}_{k,\epsilon,\mu}:L^{2}_{2}(S_{2\pi})\rightarrow L^{2}_{2}(S_{2\pi})$ is compact.  Further,  $\bar{R}_{k,\epsilon,\mu}$ is continuous in $\mu$, $\mu\in[0,1]$, with respect to the $\gnorm{\cdot}_{2,v}$-norm.
\label{propsaboutr}
\end{lem}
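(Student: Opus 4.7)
The plan is to realize $\bar R_{k,\epsilon,\mu}$ as a (diagonal) Fourier multiplier on $L^{2}(S_{2\pi})$ and read off both compactness and $\mu$--continuity from decay and continuity properties of the symbol. First I would mimic the computation in Lemma \ref{convcompute}: for $f\in L^{2}(S_{2\pi})$ with Fourier series $f=\sum_{j}\hat f_{j}e_{j}(x)$, the bound $\gnorm{R_{k,\epsilon,\mu}}_{L^{1}(\mathbb{R})}=1$ (from $H2'$) plus the same dominated--convergence argument lets me interchange convolution and summation, producing
\[
R_{k,\epsilon,\mu}\ast f(x)=\sum_{j}\hat f_{j}\,\hat R_{k,\epsilon,\mu}(j)\,e_{j}(x).
\]
A change of variables $u=y/k$ in the definition of $\hat R_{k,\epsilon,\mu}(j)$ yields the explicit symbol $\hat R_{k,\epsilon,\mu}(j)=\hat\zeta\!\bigl(\epsilon k(j+\mu)\bigr)$, where I have used $R(x;\epsilon)=\epsilon^{-1}\zeta(x/\epsilon)$ to rewrite $\hat R$ in terms of $\hat\zeta$.

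For compactness, I would approximate $\bar R_{k,\epsilon,\mu}$ in operator norm by finite--rank truncations. Define $P_{N}$ as the multiplier with symbol $\hat\zeta(\epsilon k(j+\mu))\mathbf{1}_{|j|\le N}$ (extended diagonally to $L^{2}_{2}(S_{2\pi})$); each $P_{N}$ is finite rank. Since a diagonal multiplier on $L^{2}(S_{2\pi})$ has operator norm equal to the supremum of the symbol, Hypothesis $H4'$ gives
\[
\gnorm{\bar R_{k,\epsilon,\mu}-P_{N}}_{2,v}\le \sup_{|j|>N}\bigl(1+\epsilon k|j+\mu|\bigr)^{-1/2-\tilde\epsilon}\longrightarrow 0\quad(N\to\infty),
\]
so $\bar R_{k,\epsilon,\mu}$ is a norm limit of finite--rank operators, hence compact.

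For continuity in $\mu$, the difference $\bar R_{k,\epsilon,\mu}-\bar R_{k,\epsilon,\mu_{0}}$ is again a diagonal Fourier multiplier, so
\[
\gnorm{\bar R_{k,\epsilon,\mu}-\bar R_{k,\epsilon,\mu_{0}}}_{2,v}=\sup_{j\in\mathbb{Z}}\bigl|\hat\zeta(\epsilon k(j+\mu))-\hat\zeta(\epsilon k(j+\mu_{0}))\bigr|.
\]
The function $\hat\zeta$ is continuous on $\mathbb{R}$ (since $\zeta\in L^{1}(\mathbb{R})$ by $H2'$, via Riemann--Lebesgue) and vanishes at infinity by $H4'$; a continuous function with this property is uniformly continuous on $\mathbb{R}$. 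Given $\delta>0$, choose $\eta>0$ with $|\hat\zeta(s)-\hat\zeta(s')|<\delta$ whenever $|s-s'|<\eta$; then $|\mu-\mu_{0}|<\eta/(\epsilon k)$ forces the supremum above to be smaller than $\delta$, uniformly in $j$. This gives norm continuity on $[0,1]$.

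The main obstacle is really bookkeeping rather than a conceptual difficulty: one must justify that the convolution of an $L^{1}(\mathbb{R})$ kernel against a $2\pi$--periodic $L^{2}$ function is well defined and diagonalized by the Fourier basis, and that the sup over $j$ genuinely equals the operator norm of a diagonal multiplier in this periodic setting. Both points are handled exactly as in Lemma \ref{convcompute}, with the twist that the modulating factor $e^{-i\mu x}$ simply shifts the integer lattice of Fourier frequencies by $\mu$. Once these identifications are in place, the decay from $H4'$ drives compactness and the uniform continuity of $\hat\zeta$ drives $\mu$--continuity.
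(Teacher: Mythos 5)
Your proof is correct, and the compactness half follows the paper's route essentially verbatim: both arguments realize $\bar{R}_{k,\epsilon,\mu}$ as a diagonal Fourier multiplier with symbol $\hat{\zeta}(k\epsilon(j\mp\mu))$ and exhibit it as a norm limit of finite-rank truncations, the only cosmetic difference being that you bound the truncation error by the supremum of the symbol tail (the exact operator norm of a diagonal multiplier), while the paper uses the cruder $\ell^{2}$ tail $\bigl(\sum_{|j|>N}|\hat{R}_{k,\epsilon}(j-\mu)|^{2}\bigr)^{1/2}$; both vanish by $H4^{'}$. The continuity-in-$\mu$ step is where you genuinely diverge. The paper first proves pointwise continuity of each symbol entry by dominated convergence in the defining integral, then upgrades to norm continuity through a second dominated-convergence argument on the square-summable family $\tilde{S}(\mu)$, i.e.\ it again controls the operator norm by a Hilbert--Schmidt quantity and so invokes the decay hypothesis $H4^{'}$ a second time. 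You instead observe that $\hat{\zeta}$ is continuous (from $\zeta\in L^{1}(\mathbb{R})$) and vanishes at infinity (from $H4^{'}$), hence is uniformly continuous on $\mathbb{R}$, and that the arguments $k\epsilon(j+\mu)$ shift by the $j$-independent amount $k\epsilon|\mu-\mu_{0}|$, so the supremum over $j$ of the symbol difference --- which \emph{is} the operator norm --- is controlled directly. This is shorter and gives a modulus of continuity that is manifestly uniform in $j$; the paper's version has the minor advantage of reusing the dominated-convergence machinery already set up for Lemma \ref{convcompute}. One caveat shared by both arguments (and implicit in the statement of the lemma): for $\epsilon=0$ the symbol is identically $\hat{\zeta}(0)=1$, the operator is the identity, and compactness fails, so $\epsilon>0$ must be understood throughout; your continuity step also divides by $\epsilon k$, which is harmless only because the $\epsilon=0$ case is trivial there.
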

%%%%%%%%%%%%%%%%%%%%%%%%%%%%%%%%%%%%%%%%%%%%%%%%%%%%%%%%%%%%%%%%%%%%%%%%%%%%%%%%%%%%%%%%%%%%%%%%%%%%%%%%
\begin{proof}
Using the same arguments as in Lemma \ref{convcompute}, one finds the Fourier series representation of $\bar{R}_{k,\epsilon,\mu}$, which we denote as $\hat{\bar{R}}_{k,\epsilon,\mu}$, as 
\beq
\hat{\bar{R}}_{k,\epsilon,\mu} = \left(
\ba{cc}
\Lambda_{k,\epsilon,\mu} & 0 \\
0 & \Lambda_{k,\epsilon,\mu}
\ea
\right), \nonumber
\eeq    
where $\Lambda_{k,\epsilon,\mu}$ is diagonal and $\left(\Lambda_{k,\epsilon,\mu} \right)_{jj} = \hat{R}_{k,\epsilon}(j-\mu)$.  Since in Hypothesis $H2^{'}$ we assume $R_{k,\epsilon} \in L^{1}\left(\mathbb{R}\right)$, by the Riemann-Lebesgue lemma \cite{foll}, we have 
\beq
\lim_{|j| \rightarrow \infty}\left(\Lambda_{k,\epsilon,\mu} \right)_{jj} = 0. \nonumber
\eeq 
Defining
\beq
R^{N}_{k,\epsilon,\mu} f = \sum_{j=-N}^{N} \hat{f}_{j}\hat{R}_{k,\epsilon}(j-\mu) e_{j}(x), \nonumber
\eeq 
we see that for $\gnorm{f}_{L^{2}(S_{2\pi})}=1$
\beq
\norm{R_{k,\epsilon,\mu} \ast f - R^{N}_{k,\epsilon,\mu}f} \leq \left(\sum_{|j|>N} |\hat{R}_{k,\epsilon}(j-\mu)|^{2}\right)^{\frac{1}{2}}, \nonumber
\eeq
Since we assume in Hypothesis $H4^{'}$ that $|\hat{\zeta}|\leq (1+|\tilde{s}|)^{-1/2-\tilde{\epsilon}}$, the above sum decays to zero as $N \rightarrow \infty$, and the operator $R_{k,\epsilon,\mu}\ast$ is a uniform limit of finite rank operators.  Therefore, so is $\bar{R}_{k,\epsilon,\mu}$, and $\bar{R}_{k,\epsilon,\mu}$ must then be compact. 

To prove the last part of the lemma, we note that for $\mu,\mu^{'} \in [0,1]$
\[
\hat{R}_{k,\epsilon}(j-\mu) - \hat{R}_{k,\epsilon}(j-\mu^{'}) = \int_{-\infty}^{\infty} R_{k}(x;\epsilon)e^{-ijx}\left(e^{i\mu x} -e^{i\mu^{'}x}\right)dx,
\]
so using Hypothesis $H2^{'}$ and the Dominated Convergence Theorem shows that $\hat{R}_{k,\epsilon}(j-\mu) \rightarrow \hat{R}_{k,\epsilon}(j-\mu^{'})$ as $\mu \rightarrow \mu^{'}$, or $\hat{R}_{k,\epsilon}(j-\mu)$ is continuous in $\mu$.   Likewise, we have, using Hypothesis $H4^{'}$,    
\[
\norm{R_{k,\epsilon,\mu}\ast - R_{k,\epsilon,\mu^{'}}\ast}^{2} \leq 2\tilde{S}(\mu) + 2\tilde{S}(\mu^{'})
\]
where 
\[
\tilde{S}(\mu) = \sum_{j=-\infty}^{\infty} \frac{1}{\left(1+|k\epsilon(j-\mu)| \right)^{1+2\tilde{\epsilon}}}.
\]
For $\mu\in[0,1]$, one has 
\[
\tilde{S}(\mu) \leq \sum_{j=-\infty}^{0} \frac{1}{\left(1+|k\epsilon j| \right)^{1+2\tilde{\epsilon}}} + \sum_{j=1}^{\infty} \frac{1}{\left(1+|k\epsilon (j-1)| \right)^{1+2\tilde{\epsilon}}},
\]
so using a dominated convergence argument, one gets $\norm{R_{k,\epsilon,\mu}\ast - R_{k,\epsilon,\mu^{'}}\ast}\rightarrow 0$ as $\mu\rightarrow \mu^{'}$. Thus $\gnorm{\bar{R}_{k,\epsilon,\mu}-\bar{R}_{k,\epsilon,\mu^{'}}}_{2,v}\rightarrow 0$ as $\mu\rightarrow \mu^{'}$, so $\bar{R}_{k,\epsilon,\mu}$  is continuous in $\mu$.     
\end{proof}
From the previous lemma one gets
%%%%%%%%%%%%%%%%%%%%%%%%%%%%%%%%%%%%%%%%%%%%%%%%%%%%%%%%%%%%%%%%%%%%%%%%%%%%%%%%%%%%%%%%%%%%%%%%%%%%%%%%
\begin{lem}
The operator $L_{\mu}$ has a compact resolvent on $L^{2}_{2}(S_{2\pi})$.  
\label{compresolv}
\end{lem}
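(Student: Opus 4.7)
The plan is to decompose $L_\mu$ as a compact perturbation of the self-adjoint diagonal differential operator $L_{c,\mu}$ and then combine the compactness of $(L_{c,\mu}-\lambda)^{-1}$ with boundedness of the perturbation via a Neumann-series/resolvent identity argument.

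First I would observe that $L_{c,\mu}$ is diagonal with each entry $-\tfrac{k^2}{2}((\partial_x + i\mu)^2+1)$ acting on $L^2(S_{2\pi})$. Using the Fourier basis $\{e^{-ijx}\}_{j\in\mathbb{Z}}$, this operator is diagonalized with eigenvalues $\tfrac{k^2}{2}((j-\mu)^2-1)$, so it is self-adjoint with domain $H^2_2(S_{2\pi})$. These eigenvalues tend to $+\infty$ as $|j|\to\infty$, so $L_{c,\mu}$ has compact resolvent; equivalently, the embedding $H^2_2(S_{2\pi})\hookrightarrow L^2_2(S_{2\pi})$ is compact (this is just Rellich on the circle).

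Next I would write $L_\mu = L_{c,\mu} + K_\mu$, where
\[
K_\mu = 2B\begin{pmatrix}\cos(x) & 0 \\ 0 & D\sin(x)\end{pmatrix}\bar{R}_{k,\epsilon,\mu}\begin{pmatrix}\cos(x) & D\sin(x) \\ \cos(x) & D\sin(x)\end{pmatrix}.
\]
Since multiplication by $\cos(x)$ and $\sin(x)$ is a bounded operator on $L^2(S_{2\pi})$ with norm at most one, and $\bar{R}_{k,\epsilon,\mu}$ is bounded (indeed compact) on $L^2_2(S_{2\pi})$ by Lemma~\ref{propsaboutr}, the operator $K_\mu$ is a bounded operator on $L^2_2(S_{2\pi})$; in fact, as a product with a compact factor, it is compact. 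Because $K_\mu$ is $L_{c,\mu}$-bounded with relative bound zero, $L_\mu$ is closed on the same domain $H^2_2(S_{2\pi})$.

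To conclude, I would choose $\lambda\in\mathbb{C}$ with $|\mathrm{Im}\,\lambda|$ large enough that $\lambda\notin\sigma(L_{c,\mu})$ and $\gnorm{K_\mu}_{2,v}\,\gnorm{(L_{c,\mu}-\lambda)^{-1}}_{2,v}<1$; this is possible since $L_{c,\mu}$ is self-adjoint, hence $\gnorm{(L_{c,\mu}-\lambda)^{-1}}_{2,v}\le 1/|\mathrm{Im}\,\lambda|$. Then $I+K_\mu(L_{c,\mu}-\lambda)^{-1}$ is invertible by a Neumann series, and
\[
(L_\mu-\lambda)^{-1} = (L_{c,\mu}-\lambda)^{-1}\bigl(I+K_\mu(L_{c,\mu}-\lambda)^{-1}\bigr)^{-1}.
\]
This is a product of a compact operator with a bounded one, hence compact, showing $L_\mu$ has compact resolvent. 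The main (minor) obstacle is simply ensuring that the chosen $\lambda$ lies in the resolvent set of $L_\mu$; the self-adjointness of $L_{c,\mu}$ makes this a routine estimate rather than a real difficulty.
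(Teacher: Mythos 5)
Your proof is correct, and it uses the same decomposition as the paper: $L_{\mu}=L_{c,\mu}+K$ with $L_{c,\mu}$ a self-adjoint constant-coefficient operator of compact resolvent and $K$ compact, followed by the factorization $(L_{\mu}-\lambda)^{-1}=(L_{c,\mu}-\lambda)^{-1}\bigl(I+K(L_{c,\mu}-\lambda)^{-1}\bigr)^{-1}$. The one genuine difference is how the middle factor is inverted. You take $|\mathrm{Im}\,\lambda|$ large, use the self-adjoint resolvent bound $\gnorm{(L_{c,\mu}-\lambda)^{-1}}_{2,v}\leq 1/|\mathrm{Im}\,\lambda|$, and sum a Neumann series; the paper instead fixes any $\lambda$ with nonzero imaginary part, notes that $I+(L_{c,\mu}-\lambda)^{-1}K$ is Fredholm of index zero because the perturbation is compact, and deduces invertibility from triviality of the kernel, which follows from the self-adjointness of the full operator $L_{\mu}$. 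Your route is more elementary (no Fredholm alternative) and, notably, does not require $L_{\mu}$ itself to be self-adjoint, only that $K$ be bounded; the paper's route establishes invertibility at every non-real $\lambda$ directly rather than only for $|\mathrm{Im}\,\lambda|$ large. Since compactness of the resolvent at a single point of the resolvent set propagates to all of it via the first resolvent identity, either choice suffices for the lemma.
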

%%%%%%%%%%%%%%%%%%%%%%%%%%%%%%%%%%%%%%%%%%%%%%%%%%%%%%%%%%%%%%%%%%%%%%%%%%%%%%%%%%%%%%%%%%%%%%%%%%%%%%%%
\begin{proof}
Given that $L_{\mu} = L_{c,\mu} +2 K(\epsilon;\mu;D)$, where 
\beq
K(\epsilon;\mu;D) = B  \left( \ba{cc} \cos(x) & 0 \\ 0 & D \sin(x) \ea \right) \bar{R}_{k,\epsilon,\mu} \left( \ba{cc}
\cos(x) & D \sin(x) \\ \cos(x) & D \sin(x) \ea \right), \nonumber
\eeq
one has that $K$, where we have suppressed the dependence on $\epsilon$ and $D$, is compact since it is the product of bounded and compact operators.  Further, a straightforward application of Fourier series shows that $L_{c,\mu}$ has compact resolvent on $L^{2}(S_{2\pi})$.  Let $\lambda$ be a complex number with nonzero imaginary part.  Then  
\beq
I+\left(L_{c,\mu}-\lambda\right)^{-1}K = \left(L_{c,\mu}-\lambda \right)^{-1}\left(L_{\mu}-\lambda\right). 
\label{identrelat}
\eeq        
The operator $I+\left(L_{c,\mu}-\lambda\right)^{-1}K$ is Fredholm since $\left(L_{c,\mu}-\lambda\right)^{-1}K$ is compact.  The right-hand side of \eqref{identrelat} has a trivial kernel since $L_{\mu}$ is self-adjoint.  Thus the left-hand side of \eqref{identrelat} also has a trivial kernel, which implies that $I+\left(L_{c,\mu}-\lambda\right)^{-1}K$ has a bounded inverse \cite{lax}.  Therefore, from 
\beq
\left(L_{\mu}-\lambda\right)^{-1} =\left( I+\left(L_{c,\mu}-\lambda\right)^{-1}K\right)^{-1} \left(L_{c,\mu}-\lambda \right)^{-1}, \nonumber
\eeq  
one sees that $\left(L_{\mu}-\lambda\right)^{-1}$ is the product of a bounded and a compact operator, and is therefore itself compact.  
\end{proof}
\noindent Assuming that $\lambda$ is in the resolvent of $JL_{\mu}$, and using that 
\beq
\left(JL_{\mu} - \lambda \right)^{-1} = -\left(L_{\mu}-\gamma \right)^{-1}J\left(I-(\gamma J-\lambda)\left(L_{\mu}-\gamma \right)^{-1}J\right)^{-1}, \nonumber
\eeq
where $\gamma$ is in the resolvent of $L_{\mu}$, one sees that $JL_{\mu}$ has a compact resolvent on $L^{2}_{2}(S_{2\pi})$ since $\left(JL_{\mu} - \lambda \right)^{-1}$ is the product of compact and bounded operators.  

%%%%%%%%%%%%%%%%%%%%%%%%%%%%%%%%%%%%%%%%%%%%%%%%%%%%%%%%%%%%%%%%%%%%%%%%%%%%%%%%%%%%%%%%%%%%%%%%%%%%%%%%
We now need to establish some limiting behavior of the operator $\bar{R}_{k,\epsilon,\mu}$ as the nonlocality parameter $\epsilon$ becomes large.  We prove:
%%%%%%%%%%%%%%%%%%%%%%%%%%%%%%%%%%%%%%%%%%%%%%%%%%%%%%%%%%%%%%%%%%%%%%%%%%%%%%%%%%%%%%%%%%%%%%%%%%%%%%%%
\begin{lem}  
Given Hypothesis $H4^{'}$, for $\mu \neq 0$, $\lim_{\epsilon \rightarrow \infty}  \norm{R_{k,\epsilon,\mu}\ast} = 0.$
\label{decayofrs}
\end{lem}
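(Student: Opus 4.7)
The plan is to recognize that $R_{k,\epsilon,\mu}\ast$ is a Fourier multiplier on $L^2(S_{2\pi})$, reduce the operator norm to a supremum of Fourier coefficients, and then exploit the decay built into Hypothesis $H4'$ together with the fact that $\mu \in (0,1)$ keeps $j-\mu$ uniformly bounded away from zero.

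First, I would repeat the calculation used in Lemma \ref{convcompute} (with the extra phase $e^{-i\mu x}$ absorbed into the kernel) to show that, for $f \in L^2(S_{2\pi})$ with Fourier series $f = \sum_j \hat f_j e_j(x)$,
\[
(R_{k,\epsilon,\mu}\ast f)(x) = \sum_{j=-\infty}^{\infty} \hat f_j \, \hat R_{k,\epsilon}(j-\mu)\, e_j(x),
\]
so that by Parseval's identity
\[
\norm{R_{k,\epsilon,\mu}\ast} = \sup_{j \in \mathbb{Z}} \left| \hat R_{k,\epsilon}(j-\mu) \right|.
\]
A direct change of variables in the definition of $R_{k,\epsilon}$ gives $\hat R_{k,\epsilon}(\tilde{s}) = \hat \zeta(k\epsilon \tilde{s})$, and Hypothesis $H4'$ then yields
\[
\left| \hat R_{k,\epsilon}(j-\mu) \right| \leq \bigl(1+k\epsilon |j-\mu|\bigr)^{-1/2-\tilde{\epsilon}}.
\]

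Next I would use the standing assumption $\mu \in [0,1)$ from the preceding subsection together with the hypothesis $\mu \neq 0$ to obtain $\mu \in (0,1)$. Setting $\delta = \min(\mu, 1-\mu) > 0$, we have $|j-\mu| \geq \delta$ for every $j \in \mathbb{Z}$. Plugging this into the symbol bound and passing the supremum through gives
\[
\norm{R_{k,\epsilon,\mu}\ast} \leq \bigl(1+k\epsilon \delta \bigr)^{-1/2-\tilde{\epsilon}},
\]
which tends to $0$ as $\epsilon \to \infty$, proving the lemma.

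There is no genuine obstacle here: the argument is purely symbolic once the multiplier representation is in hand. The only subtle point worth flagging is the necessity of the hypothesis $\mu \neq 0$ — if $\mu = 0$ were allowed, then the $j = 0$ term would give $\hat R_{k,\epsilon}(0) = \int_{\mathbb{R}} \zeta(x)\,dx = 1$ by Hypothesis $H2'$, and the supremum could never decay. This is the reason the lemma must exclude $\mu = 0$, and it should be emphasized in the proof.
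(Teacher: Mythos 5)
Your proof is correct, but it takes a cleaner route than the paper's. The paper bounds the operator norm of the multiplier by the full $\ell^{2}$ sum of its symbol, $\norm{R_{k,\epsilon,\mu}\ast}^{2} \leq \sum_{j}|\hat{\zeta}(k\epsilon(j-\mu))|^{2}$, and then runs a two-step splitting argument: first choose $\tilde{N}$ so the tail of the sum is small (using the summability granted by $H4^{'}$), then send $\epsilon\rightarrow\infty$ to kill the finitely many remaining terms. You instead use the sharp fact that a diagonal Fourier multiplier on $L^{2}(S_{2\pi})$ has operator norm exactly $\sup_{j}|\hat{R}_{k,\epsilon}(j-\mu)|$, and then observe that $|j-\mu|\geq\delta=\min(\mu,1-\mu)>0$ uniformly in $j\in\mathbb{Z}$, which yields the explicit quantitative bound
\[
\norm{R_{k,\epsilon,\mu}\ast} \leq \left(1+k\epsilon\delta\right)^{-1/2-\tilde{\epsilon}} \longrightarrow 0.
\]
Your approach buys an explicit decay rate in $\epsilon$ and avoids the slightly delicate bookkeeping in the paper's argument about whether enlarging $\epsilon$ disturbs the choice of $\tilde{N}$; it also only needs the pointwise decay of $\hat{\zeta}$ rather than square-summability of the symbol over the lattice. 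The paper's sum-based bound, on the other hand, is the same estimate it reuses elsewhere (e.g.\ in the compactness argument of Lemma \ref{propsaboutr}), which is presumably why it is phrased that way. Your closing remark correctly identifies why $\mu\neq 0$ is essential: at $\mu=0$ the $j=0$ term equals $\hat{\zeta}(0)=\gnorm{\zeta}_{L^{1}(\mathbb{R})}=1$ for every $\epsilon$, so no decay is possible. Both arguments rely on the standing normalization $\mu\in[0,1)$ from the Floquet decomposition; it is worth stating that explicitly, as you do, since for general real $\mu$ the correct hypothesis would be $\mu\notin\mathbb{Z}$.
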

%%%%%%%%%%%%%%%%%%%%%%%%%%%%%%%%%%%%%%%%%%%%%%%%%%%%%%%%%%%%%%%%%%%%%%%%%%%%%%%%%%%%%%%%%%%%%%%%%%%%%%%%
\begin{proof}
One has
\beq
\hat{R}_{k,\epsilon}(j-\mu) = \ds{\int_{-\infty}^{\infty}} R_{k}(x;\epsilon)e^{-i(j-\mu)x} dx = \ds{\int_{-\infty}^{\infty}} \zeta(x) e^{-ik\epsilon(j-\mu)x} dx = \hat{\zeta}(k\epsilon(j-\mu)). \nonumber
\eeq  
Examining the $L^{2}(S_{2\pi})$ norm of the operator $R_{k,\epsilon,\mu}\ast$, one gets
\beq
\norm{R_{k,\epsilon,\mu}\ast}^{2} \leq \sum_{j=-\infty}^{\infty} \left|\hat{\zeta}(k\epsilon(j-\mu)) \right|^{2}. \nonumber
\eeq
Note, the sum is convergent by Hypothesis $H4^{'}$.  For a given value of the nonlocality parameter $\epsilon$ and an arbitrarily chosen value of $\delta$, choose $\tilde{N}$ such that 
\beq
\sum_{|j|>\tilde{N}} \left|\hat{\zeta}(k\epsilon(j-\mu)) \right|^{2} <  \frac{\delta}{2}. \nonumber
\eeq
Next, choose $\epsilon$ large enough such that 
\beq
\sum_{j=-\tilde{N}}^{\tilde{N}} \left|\hat{\zeta}(k\epsilon(j-\mu)) \right|^{2} < \frac{\delta}{2}. \nonumber
\eeq
The second assumption does not alter the first since choosing a large $\epsilon$ value corresponds to choosing a larger value of $\tilde{N}$.  Thus, for $\mu \neq 0$,
\beq
\lim_{\epsilon \rightarrow \infty}  \norm{R_{k,\epsilon,\mu}\ast} = 0, \nonumber
\eeq
and $\bar{R}_{k,\epsilon,\mu} \rightarrow 0$ uniformly in norm as $\epsilon \rightarrow \infty$. 
\end{proof}
%%%%%%%%%%%%%%%%%%%%%%%%%%%%%%%%%%%%%%%%%%%%%%%%%%%%%%%%%%%%%%%%%%%%%%%%%%%%%%%%%%%%%%%%%%%%%%%%%%%%%%%%

We finally prove that the resolvents of $JL_{\mu}$ and $JL_{0}$ converge in the $L^{2}_{2}(S_{2\pi})$-norm.  This is used to show, in effect, that the spectra of one operator is a perturbation in $\mu$ of the other. 
%%%%%%%%%%%%%%%%%%%%%%%%%%%%%%%%%%%%%%%%%%%%%%%%%%%%%%%%%%%%%%%%%%%%%%%%%%%%%%%%%%%%%%%%%%%%%%%%%%%%%%%%
\begin{lem}
Suppose there exists $\mu^{\ast}\in (0,1)$ such that $\lambda$ is in the resolvent of $JL_{\mu}$ for $0\leq\mu<\mu^{\ast}$.  Further suppose that $(JL_{c,0}-\lambda)^{-1}$ exists.  Then $(JL_{\mu}-\lambda)^{-1}$ converges to $(JL_{0}-\lambda)^{-1}$ in the $L^{2}_{2}(S_{2\pi})$-norm as $\mu\rightarrow 0^{+}$. 
\label{convresolv}
\end{lem}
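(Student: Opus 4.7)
The plan is a second-resolvent-identity argument. Taking $\mu=0$ in the hypothesis, $(JL_{0}-\lambda)^{-1}$ exists; as the resolvent of a closed operator whose domain is $H^{2}_{2}(S_{2\pi})$, it maps $L^{2}_{2}(S_{2\pi})$ into $H^{2}_{2}(S_{2\pi})$ boundedly (by the closed graph theorem applied to $JL_{0}$). I would then write
\[
JL_{\mu}-\lambda = (JL_{0}-\lambda) + J(L_{\mu}-L_{0}) = \bigl[I + J(L_{\mu}-L_{0})(JL_{0}-\lambda)^{-1}\bigr](JL_{0}-\lambda),
\]
and, setting $T_{\mu}:=J(L_{\mu}-L_{0})(JL_{0}-\lambda)^{-1}$, reduce the claim to showing $\gnorm{T_{\mu}}_{2,v}\to 0$ as $\mu\to 0^{+}$. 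Once that is in hand, a Neumann series gives $(I+T_{\mu})^{-1}\to I$ in operator norm, and then
\[
(JL_{\mu}-\lambda)^{-1} = (JL_{0}-\lambda)^{-1}(I+T_{\mu})^{-1} \longrightarrow (JL_{0}-\lambda)^{-1}
\]
in the $L^{2}_{2}(S_{2\pi})$ operator norm. The Neumann-series inverse agrees with the true resolvent because $\lambda$ is assumed to lie in the resolvent set of $JL_{\mu}$ for $0\le \mu<\mu^{\ast}$, so the inverse is unique.

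To bound $T_{\mu}$, I would split $L_{\mu}-L_{0} = (L_{c,\mu}-L_{c,0}) + 2\bigl(K(\epsilon;\mu;D)-K(\epsilon;0;D)\bigr)$. A direct expansion shows each diagonal entry of $L_{c,\mu}-L_{c,0}$ equals $-ik^{2}\mu\p_{x}+\tfrac{k^{2}}{2}\mu^{2}$, so this difference is bounded from $H^{2}_{2}(S_{2\pi})$ into $L^{2}_{2}(S_{2\pi})$ with norm $O(\mu)$. For the second piece, Lemma \ref{propsaboutr} yields $\gnorm{\bar{R}_{k,\epsilon,\mu}-\bar{R}_{k,\epsilon,0}}_{2,v}\to 0$, and since $K(\epsilon;\mu;D)-K(\epsilon;0;D)$ merely sandwiches this difference between bounded multiplication operators by $\cos(x)$ and $D\sin(x)$, it also tends to zero in $L^{2}_{2}(S_{2\pi})$ operator norm. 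Composing each piece with $J$ (an isometry) and with the bounded $(JL_{0}-\lambda)^{-1}\colon L^{2}_{2}(S_{2\pi})\to H^{2}_{2}(S_{2\pi})$ then gives $\gnorm{T_{\mu}}_{2,v}\to 0$ as desired.

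The main obstacle, and the reason for this particular factorization rather than a naive perturbation on $L^{2}_{2}(S_{2\pi})$, is that $L_{c,\mu}-L_{c,0}$ is not a bounded operator on $L^{2}_{2}(S_{2\pi})$ at all: the derivative term $\mu\p_{x}$ is unbounded there. The regularizing effect of $(JL_{0}-\lambda)^{-1}$, whose range lies in the domain $H^{2}_{2}(S_{2\pi})$, is exactly what converts the differential piece of $L_{\mu}-L_{0}$ into a small operator on $L^{2}_{2}(S_{2\pi})$. The auxiliary hypothesis that $(JL_{c,0}-\lambda)^{-1}$ exists is available should one prefer to pass through the free operator $JL_{c,\mu}$ first (treating the compact $2JK(\epsilon;\mu;D)$ separately via Lemma \ref{propsaboutr}), but the route above does not require it.
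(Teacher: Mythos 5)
Your proof is correct and follows the same skeleton as the paper's: the second resolvent identity $JL_{\mu}-\lambda=\bigl(I+J(L_{\mu}-L_{0})(JL_{0}-\lambda)^{-1}\bigr)(JL_{0}-\lambda)$, a Neumann series once the perturbation term is small, and the splitting of $L_{\mu}-L_{0}$ into the constant-coefficient piece $L_{c,\mu}-L_{c,0}$ plus the sandwiched convolution difference handled by Lemma \ref{propsaboutr}. The one genuine divergence is how the unbounded differential piece is tamed. The paper factors $\tilde{R}_{0}(\lambda)=(JL_{c,0}-\lambda)^{-1}\bigl(I+2JK(\epsilon;0;D)(JL_{c,0}-\lambda)^{-1}\bigr)^{-1}$ and then observes that $(L_{c,\mu}-L_{c,0})(JL_{c,0}-\lambda)^{-1}$ is a constant-coefficient operator whose symbol is $O(\mu)$; this is precisely where the hypothesis that $(JL_{c,0}-\lambda)^{-1}$ exists enters. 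You instead invoke the closed graph theorem (equivalently, the equivalence of the graph norm of $JL_{0}$ with the $H^{2}_{2}(S_{2\pi})$ norm, which holds because $2JK$ is bounded and $L_{c,0}$ is elliptic of order two) to get that $(JL_{0}-\lambda)^{-1}$ is bounded from $L^{2}_{2}(S_{2\pi})$ into $H^{2}_{2}(S_{2\pi})$, and then compose with $L_{c,\mu}-L_{c,0}=-ik^{2}\mu\p_{x}+\tfrac{k^{2}}{2}\mu^{2}$, which is $O(\mu)$ from $H^{2}_{2}$ to $L^{2}_{2}$. What your route buys is that the auxiliary hypothesis on $(JL_{c,0}-\lambda)^{-1}$ becomes superfluous, so your version of the lemma is slightly stronger; what the paper's route buys is an entirely explicit Fourier-symbol computation with no appeal to abstract regularity, plus the same factorization is reused elsewhere in the paper. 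Both arguments are sound.
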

%%%%%%%%%%%%%%%%%%%%%%%%%%%%%%%%%%%%%%%%%%%%%%%%%%%%%%%%%%%%%%%%%%%%%%%%%%%%%%%%%%%%%%%%%%%%%%%%%%%%%%%%
\begin{proof}
Define the operator $\tilde{R}_{\mu}(\lambda) = (JL_{\mu}-\lambda)^{-1}$.  Then we have that 
\[
\gnorm{\tilde{R}_{\mu}(\lambda) - \tilde{R}_{0}(\lambda)}_{2,v} \leq \gnorm{\tilde{R}_{0}(\lambda)}_{2,v}\gnorm{I - \left(J(L_{\mu}-L_{0})\tilde{R}_{0}(\lambda)+I \right)^{-1}}_{2,v}
\]
We have that 
\[
L_{\mu} - L_{0} = \tilde{L}_{c}(\mu) + \tilde{V}_{1} \left(\bar{R}_{k,\epsilon,\mu}-\bar{R}_{k,\epsilon,0} \right)\tilde{V}_{2},
\]
where $\tilde{L}_{c}(\mu) = L_{c,\mu}-L_{c,0}$, and $\tilde{V}_{1}$ and $\tilde{V}_{2}$ are such that $2K(\epsilon;\mu;D) = \tilde{V}_{1} \bar{R}_{k,\epsilon,\mu}\tilde{V}_{2}$.  Using the fact that $\tilde{V}_{1}$ and $\tilde{V}_{2}$ are bounded in $L^{2}_{2}(S_{2\pi})$ and Lemma \ref{propsaboutr},  
\[
\lim_{\mu \rightarrow 0^{+}} \gnorm{ \tilde{V}_{1} \left(\bar{R}_{k,\epsilon,\mu}-\bar{R}_{k,\epsilon,0} \right)\tilde{V}_{2}}_{2,v} = 0.
\]
Defining $\tilde{R}_{0,c}(\lambda)=(JL_{c,0}-\lambda)^{-1}$, we rewrite $\tilde{R}_{0}(\lambda)$ so that 
\[
\tilde{R}_{0}(\lambda) = \tilde{R}_{0,c}(\lambda)\left(I + 2JK(\epsilon;0;D)\tilde{R}_{0,c}(\lambda)\right)^{-1}.
\]
We then get that 
\[
\gnorm{J\tilde{L}_{c}(\mu)\tilde{R}_{0}(\lambda)}_{2,v}\leq \gnorm{\tilde{L}_{c}(\mu) \tilde{R}_{0,c}(\lambda)}_{2,v}\gnorm{\left(I + 2JK(0)\tilde{R}_{0,c}(\lambda)\right)^{-1}}_{2,v},
\]  
where $K(0)=K(\epsilon;0;D)$.  The operator $\tilde{L}_{c}(\mu)\tilde{R}_{0,c}(\lambda) = \tilde{L}_{c}(\mu)  \left(JL_{c,0} - \lambda \right)^{-1}$ is a constant coefficient operator.  Thus, using the Fourier transform, it is straightforward to show it is bounded and must vanish in the $\gnorm{\cdot}_{2,v}$-norm as $\mu \rightarrow 0^{+}$.  Thus we have that 
\[
\lim_{\mu\rightarrow 0^{+}} \gnorm{J(L_{\mu}-L_{0})\tilde{R}_{0}(\lambda)}_{2,v}=0.
\]
Taking $\mu$ sufficiently small so that $\gnorm{J(L_{\mu}-L_{0})\tilde{R}_{0}(\lambda)}_{2,v} < 1$, we have that 
\[
\gnorm{I - \left(J(L_{\mu}-L_{0})\tilde{R}_{0}(\lambda)+I \right)^{-1}}_{2,v} \leq \frac{\gnorm{J(L_{\mu}-L_{0})\tilde{R}_{0}(\lambda) }_{2,v}}{1-\gnorm{J(L_{\mu}-L_{0})\tilde{R}_{0}(\lambda) }_{2,v}},
\]
which shows that 
\[
\lim_{\mu \rightarrow 0^{+}}\gnorm{\tilde{R}_{\mu}(\lambda) - \tilde{R}_{0}(\lambda)}_{2,v} = 0.
\]
\end{proof}

\section{Stability for Small Potential and Large Offset Size}

%%%%%%%%%%%%%%%%%%%%%%%%%%%%%%%%%%%%%%%%%%%%%%%%%%%%%%%%%%%%%%%%%%%%%%%%%%%%%%%%%%%%%%%%%%%%%%%%%%%%%
\subsection{Computation of the Spectrum with $V_{0}=0$}  
%%%%%%%%%%%%%%%%%%%%%%%%%%%%%%%%%%%%%%%%%%%%%%%%%%%%%%%%%%%%%%%%%%%%%%%%%%%%%%%%%%%%%%%%%%%%%%%%%%%%%

In this section, we compute the spectrum of $JL$ over $H^{2}_{2}(S_{2\pi n})$, with $V_{0}=0$ or $D=1$ (see \eqref{defofd}).  As explained earlier, this is done by computing the spectrum of the operators $JL_{r/n}$ over $H^{2}_{2}(S_{2\pi})$, $r\in\left\{0,\cdots,n-1 \right\}$.  To do this, we notice that we can treat $JL_{\mu}$ as a constant coefficient operator with a compact perturbation.  For the remainder of the section, we assume $\mu \neq 0$ so that the compact perturbation decays uniformly to zero as $\epsilon \rightarrow \infty$.  The $\mu=0$ case is covered by noting that $J(L_{\mu}-L_{0})$ is a relatively compact perturbation of $JL_{0}$, which we note was used to prove Lemma \ref{convresolv}.  Therefore one can find the eigenvalues of $JL_{0}$ by taking limits of the eigenvalues of $JL_{\mu}$.  

Using the Fourier transform, we compute the spectrum and eigenfunctions of $JL_{c,\mu}$ explicitly.  One has
\beq
\sigma(JL_{c,\mu}) = \left\{\pm  \frac{i}{2}k^2 \left|(n-\mu)^2-1\right|: n \in \mathbb{Z} \right\}, \nonumber
\eeq  
and for $n \neq 0, 1$, the corresponding eigenfunctions for the eigenvalues on the positive imaginary axis are 
\beq 
\left(
\ba{c}
1 \\
i
\ea
\right) e^{-inx},
\label{pkr}
\eeq
while for $n = 0~ \mbox{or} ~1$,
\beq
\left(
\ba{c}
1 \\
-i
\ea
\right) e^{-inx}.
\label{nkr}
\eeq
Taking conjugates and letting $x \rightarrow -x$ gives the corresponding eigenfunctions for the eigenvalues on the negative imaginary axis.  

The eigenvalue problem for the operator $JL_{\mu}$ and eigenvalue $\lambda_{n}$ is of course to find nontrivial $\varphi_{n}\in H^{2}_{2}(S_{2\pi})$ such that
\beq
(JL_{\mu} - \lambda_{n})\varphi_{n} = 0. \nonumber
\eeq  
We write, as in Lemma \ref{compresolv}, 
\beq
(L_{c,\mu} + 2K(\epsilon;1) + \lambda_{n} J)\varphi_{n} = 0, \nonumber
\eeq
and let $\lambda_{n} = \lambda_{\infty}(n) + \lambda_{p}(n)$, where $\lambda_{\infty}(n)$ is an eigenvalue of $\sigma(JL_{c,\mu})$, and $\lambda_{p}(n)$ is a perturbation of $\lambda_{\infty}(n)$ that will be determined exactly.  We have
\beq
(L_{c,\mu}+\lambda_{\infty}(n)J + 2K(\epsilon;1) + \lambda_{p}(n) J)\varphi_{n} = 0.
\label{eval}
\eeq 
Let 
\beq
L^{\infty}_{c,\mu,n} = L_{c,\mu}+\lambda_{\infty}(n)J, \nonumber
\eeq
and
\beq
T(\epsilon;n) = 2K(\epsilon;1) + \lambda_{p}(n) J. \nonumber
\eeq
Define $P_{n}$ to be the projection onto the null space of $L^{\infty}_{c,\mu,n}$.  Since $L^{\infty}_{c,\mu,n}$ is self adjoint, we use a Lyupanov-Schmidt reduction \cite{hale} to rewrite \eqref{eval} as 
\begin{eqnarray}
\xi_{n} + M(n) (\phi_{n} + \xi_{n}) & = 0 \label{lsr1}\\
P_{n} T(\epsilon;n) (\phi_{n} + \xi_{n}) & = 0 \label{lsr2}
\end{eqnarray}
where $\varphi_{n} = \phi_{n} + \xi_{n}$, $\phi_{n}$ is in the null space of $L^{\infty}_{c,\mu,n}$, and 
\beq
M(n) = (L^{\infty}_{c,\mu,n})^{-1}(I-P_{n})T(\epsilon;n). \nonumber
\eeq
At this point, the equations \eqref{lsr1} and \eqref{lsr2} are the same as the original eigenvalue problem.  No added assumptions or constraints have been made.  Therefore solving \eqref{lsr1} and \eqref{lsr2} is equivalent to solving the original eigenvalue problem.  

Rewriting \eqref{lsr1} as 
\[
(I+M(n))\xi_{n} = -M(n)\phi_{n}, 
\]
we may formally write
\[
\xi_{n} = -(I+M(n))^{-1}M(n) \phi_{n} = (-M(n) + M^{2}(n) - \cdots)\phi_{n}. 
\]
Though this expansion is valid for sufficiently large $\epsilon$ (see Lemma \ref{decayofrs}), it is more important as a motivation to look at the terms $M^{k}(n)\phi_{n}$.  For example, let $n=0$ or $1$, with $\lambda_{\infty}(n)$ on the positive imaginary axis, so that $\phi_{n}$ is given by \eqref{nkr}.  For $D=1$, 
\[
T(\epsilon;n) = 
2 B \left( 
\ba{cc}
\cos(x) & 0 \\
0 & \sin(x) 
\ea
\right) \bar{R}_{k,\epsilon,\mu} \left(
\ba{cc}
\cos(x) & \sin(x) \\
\cos(x) & \sin(x)
\ea
\right) + \lambda_{p}(n)J, 
\]  
so that
\[
T(\epsilon;n) \phi_{n} = (B\hat{r}_{n+1}-i\lambda_{p}(n))\phi_{n} + B\hat{r}_{n+1}\tilde{\phi}_{n}, 
\]
where
\[
\tilde{\phi}_{n} =
\left( 
\ba{c}
1 \\
i
\ea
\right)e^{-i(n+2)x}, 
\]
and 
\[
\hat{r}_{n} = \hat{R}_{k,\epsilon}(n-\mu).  
\]
Note, we suppress the parameters $\epsilon$ and $\mu$ in $\hat{r}_{n}$ for the sake of clarity in the presentation.  Thus $(I-P_{n})T(\epsilon;n)\phi_{n}=B\hat{r}_{n+1}\tilde{\phi}_{n}$, and 
\[
L^{\infty}_{c,\mu,n} \tilde{\phi}_{n} = \left(\frac{k^{2}}{2}((n+2-\mu)^2-1) +i\lambda_{\infty}(n) \right)\tilde{\phi}_{n}. 
\]
Hence, for $n=0$,
\[
M(0)\phi_{0} = \frac{B\hat{r}_{1}}{k^{2}(\mu-1)^{2}}\tilde{\phi}_{0}, 
\]
and for $n=1$,
\[
M(1)\phi_{1} = \frac{B\hat{r}_{2}}{k^{2}(\mu-2)^{2}}\tilde{\phi}_{1}. 
\]

We consider $M(n)\tilde{\phi}_{n}$ for $n = 0$ or $1$.  We see that
\[
T(\epsilon;n)\tilde{\phi}_{n} = \hat{r}_{n+1}B\phi_{n} + (\hat{r}_{n+1}B+i\lambda_{p}(n))\tilde{\phi}_{n}, 
\]
and hence for $n=0$, we obtain
\[
M(0)\tilde{\phi}_{0} = \frac{\hat{r}_{1}B+i\lambda_{p}(0)}{k^{2}(\mu-1)^{2}}\tilde{\phi}_{0}. 
\]
For $n=1$, we have
\[
M(1)\tilde{\phi}_{1} = \frac{\hat{r}_{2}B+i\lambda_{p}(1)}{k^{2}(\mu-2)^{2}}\tilde{\phi}_{1}. 
\]
Define the constants $\gamma_{n}$ and $\delta_{n}$ such that $M(n)\phi_{n}=\gamma_{n}\tilde{\phi}_{n}$ and $M(n)\tilde{\phi}_{n}=\delta_{n}\tilde{\phi}_{n}$.  Therefore, equating 
\[
\xi_{n} = -\frac{\gamma_{n}}{1+\delta_{n}} \tilde{\phi}_{n} 
\]
gives a solution of \eqref{lsr1}.    

Using \eqref{lsr2}, one obtains
\[
\left<T(\epsilon;n)\phi_{n},\phi_{n}\right> -\frac{\gamma_{n}}{1+\delta_{n}} \left<T(\epsilon;n)\tilde{\phi}_{n},\phi_{n} \right>= 0. 
\]
From the work above, since $\norm{\phi_{n}}>0$, we see this reduces to
\[
(B\hat{r}_{n+1}-i\lambda_{p}(n)) - \frac{B\hat{r}_{n+1}\gamma_{n}}{1+\delta_{n}} = 0.
\]
Writing 
\[
\delta_{n} = \ds{\frac{B\hat{r}_{n+1}+i\lambda_{p}(n)}{c_{n}(\mu)}}, 
\]
where $c_{0}=k^{2}(\mu-1)^{2}$ and $c_{1}=k^{2}(\mu-2)^{2}$, we see that we want to solve the quadratic equation 
\beq
\lambda^{2}_{p}(n) -ic_{n}(\mu)\lambda_{p}(n) +c_{n}(\mu)B\hat{r}_{n+1}= 0.
\label{quad}
\eeq 
Let $\lambda_{p}(n)=\lambda^{(r)}_{p}(n)+i\lambda^{(i)}_{p}(n)$, where $\lambda^{(r)}_{p}(n)$ and $\lambda^{(i)}_{p}(n)$ are real values.  Therefore, by separating into real and imaginary parts, \eqref{quad} becomes
\begin{eqnarray}
\lambda^{(r)}_{p}(2\lambda^{(i)}_{p}-c_{n}(\mu)) & = 0 \nonumber \\
(\lambda^{(r)}_{p})^2 - (\lambda^{(i)}_{p})^2 +c_{n}(\mu)\lambda^{(i)}_{p} + c_{n}(\mu)B\hat{r}_{n+1} & = 0. \nonumber
\end{eqnarray}
If we assume $\lambda^{(r)}_{p}(n)\neq 0$, we have $\lambda^{(i)}_{p}(n) = \ds{\frac{c_{n}}{2}}$, which implies
\[
(\lambda^{(r)}_{p})^2 = -\frac{c^{2}_{n}}{4} - c_{n}(\mu)B\hat{r}_{n+1}.
\]
The right-hand side of the above expression is always negative by construction, since $\hat{r}_{n+1}>0$.  Thus $\lambda^{(r)}_{p}(n)=0$, so that $\lambda_{p}(n)=i\lambda^{(i)}_{p}(n)$.  Therefore we have
\[
\lambda_{p}(n) = \frac{i}{2}\left(c_{n}(\mu) - \left(c^{2}_{n}(\mu) + 4c_{n}(\mu)B\hat{r}_{n+1} \right)^{1/2}\right). 
\]
Since we know, as shown in Lemma \ref{decayofrs}, that $\hat{r}_{n} \rightarrow 0$ as $\epsilon \rightarrow \infty$, and we need $\lambda_{p}(n)\rightarrow 0$ as $\epsilon \rightarrow \infty$, this determines the correct sign when solving the quadratic equation in $\lambda^{(i)}_{p}(n)$.  With this choice of $\lambda_{p}(n)$, we see $\delta_{n} > 0$, so $1+\delta_{n} > 0$, and the choice of $\xi_{n}$ is well defined.  

For the case that $n \neq 0$ or $1$, proceeding in a fashion identical to that above, one shows that
\[
M(n) \phi_{n} = \gamma_{n} \tilde{\phi}_{n}, 
\]
and
\[
M(n) \tilde{\phi}_{n} = \delta_{n} \tilde{\phi}_{n}. 
\]
Here 
\beq
\tilde{\phi}_{n} = 
\left( 
\ba{c}
1 \\
-i
\ea
\right)e^{-i(n-2)x}, \nonumber
\eeq 

\beq
\gamma_{n} = \frac{B\hat{r}_{n-1}}{c_{n}(\mu)}, \nonumber
\eeq
and
\beq
\delta_{n} = \frac{B\hat{r}_{n-1}-i\lambda_{p}(n)}{c_{n}(\mu)}, \nonumber
\eeq
where $c_{n}(\mu)=k^2(n-\mu-1)^2$.

We again equate $\xi_{n} = -\ds{\frac{\gamma_{n}}{1+\delta_{n}}}\tilde{\phi}_{n}$, which solves \eqref{lsr1}, and from \eqref{lsr2} one gets a characteristic equation for $\lambda_{p}(n)$ which is
\[
\lambda_{p}^{2} + ic_{n}(\mu)\lambda_{p} + c_{n}(\mu)B\hat{r}_{n-1} = 0. 
\]  
Finally, 
\[
\lambda_{p}(n) = \frac{i}{2}\left(-c_{n}(\mu) + \left(c^{2}_{n}(\mu) + 4c_{n}(\mu)B\hat{r}_{n-1} \right)^{1/2} \right).
\]

Given that the operator $2K(\epsilon;\mu;1)$ is not symmetric with respect to conjugation followed by equating $x$ to $-x$, we must repeat the above computations except now with the expansions around the eigenvalues along the negative imaginary axis.  The process is identical to that above, and we only list the results.  For $n = 0$ or $1$, we have 
\[
\lambda_{p}(n) = \frac{i}{2} \left(-c_{n}(\mu) + \left(c^{2}_{n}(\mu) + 4c_{n}(\mu)B\hat{r}_{n-1} \right)^{1/2} \right), 
\] 
where $c_{n}(\mu)$ is given by
\beq
c_{n}(\mu) = \left\{
\ba{cc}
k^2(\mu+1)^2, & n=0, \\
\\
k^2\mu^2, & n=1.
\ea
\right. \nonumber
\eeq
The corresponding eigenfunctions are
\beq
\varphi_{n} =
\left( 
\ba{c}
1 \\
i
\ea
\right)e^{-inx} - \frac{\gamma_{n}}{1+\delta_{n}} 
\left(
\ba{c}
1 \\
-i
\ea
\right)e^{-i(n-2)x}, \nonumber
\eeq
where $\gamma_{n} = \ds{\frac{B\hat{r}_{n-1}}{c_{n}(\mu)}}$ and $\delta_{n}=\ds{\frac{B\hat{r}_{n-1}-i\lambda_{p}}{c_{n}(\mu)}}$.

Likewise, for $n\neq 0, 1$, we have 
\beq
\lambda_{p}(n) = \frac{i}{2} \left(c_{n}(\mu) - \left(c^{2}_{n}(\mu) + 4c_{n}(\mu)B\hat{r}_{n+1} \right)^{1/2} \right), \nonumber
\eeq
with $c_{n}(\mu) = k^2(n-\mu+1)^2$.  The corresponding eigenfunctions are
\beq
\varphi_{n} =
\left( 
\ba{c}
1 \\
-i
\ea
\right)e^{-inx} - \frac{\gamma_{n}}{1+\delta_{n}} 
\left(
\ba{c}
1 \\
i
\ea
\right)e^{-i(n+2)x}, \nonumber
\eeq  
with $\gamma_{n} = \ds{\frac{B\hat{r}_{n+1}}{c_{n}(\mu)}}$ and $\delta_{n}=\ds{\frac{B\hat{r}_{n+1}+i\lambda_{p}}{c_{n}(\mu)}}$.

The only issue remaining is whether we have captured the entire spectrum of $JL_{\mu}$ for each value of $\epsilon \in [0,\infty)$.  However, every eigenvalue is a perturbation of an eigenvalue in the constant coefficient case, which has only simple eigenvalues in its spectrum since $JL_{c,\mu}$ is a skew-adjoint operator with compact resolvent.  Hence we have not missed any eigenvalues due to multiplicity.  Thus we have computed $\sigma(JL_{\mu})$ for $V_{0}=0$.  
%%%%%%%%%%%%%%%%%%%%%%%%%%%%%%%%%%%%%%%%%%%%%%%%%%%%%%%%%%%%%%%%%%%%%%%%%%%%%%%%%%%%%%%%%%%%%%%%%%%%%
\subsection{Krein Signature}
For a purely imaginary semisimple eigenvalue $\lambda \in \sigma(JL_{\mu})$ with eigenvector $\varphi$, the Krein signature of $\lambda$ is defined as $\mbox{sgn}(\left<L_{\mu}\varphi,\varphi\right>)$ \cite{bjorn}.  Let
\beq
\alpha_{n}(B,\mu,\epsilon,k) = \frac{\gamma_{n}}{1+\delta_{n}}, \nonumber
\eeq  
which, for the eigenvalues that represent perturbations of eigenvalues on the positive imaginary axis, is given by
\beq
\alpha_{n} = \frac{B\hat{r}_{n\pm 1}}{\frac{c_{n}}{2}+\frac{\sqrt{c^{2}_{n} + 4c_{n}B\hat{r}_{n\pm1}}}{2}+B\hat{r}_{n\pm1}}, \nonumber
\eeq
where the $+$ in $\pm$ corresponds to choosing $n=0,1$ and the $-$ corresponds to $n \neq 0,1$.  A similar expression can be derived for the eigenvalues starting on the negative imaginary axis.  

Given the definition for $\alpha_{n}$, it is straightforward to show for $D=1$ and $\varphi_{n} = \phi_{n}+\xi_{n}$, that 
\begin{eqnarray}
\left<L_{\mu}\varphi_{n},\varphi_{n}\right> & = & 2\pi k^2\left(\left(n-\mu \right)^2-1 + \alpha_{n}^2\left(\left(n\pm2-\mu\right)^2-1\right) \right) \nonumber \\
& &+ 4\pi B\hat{r}_{n\pm1}(1-\alpha_{n})^2.\label{ks}
\end{eqnarray}

Along the positive imaginary axis, one again lets the $-$ of $\pm$ correspond to the case $n \neq 0, 1$, while we take $+$ for $n=0, 1$.  This relationship is reversed on the negative imaginary axis.  Thus we see, starting on the positive imaginary axis, for $n\neq 0$, $1$, $2$, or $3$, all the terms in \eqref{ks} are positive.  For $n=2$ or $3$, we note that $0\leq \alpha_{n} < 1$, thus
\beq
\left(n-\mu \right)^2-1 + \alpha_{n}^2\left(\left(n-2-\mu\right)^2-1\right) > 4\left(n-\mu -1\right), \nonumber
\eeq   
which is positive for $n=2$ or $3$, $\mu \in [0,1)$.  

Likewise, along the negative imaginary axis, for $n\neq 0$, $1$, $-2$, or $-1$, all terms are positive.  For $n=-1$ or $-2$, we have
\beq
\left(n-\mu \right)^2-1 + \alpha_{n}^2\left(\left(n+2-\mu\right)^2-1\right) > -4\left(n-\mu +1\right), \nonumber
\eeq 
so that the eigenvalues corresponding to $n=-1$ and $-2$ on the negative imaginary axis have positive Krein signature.  

However, for $n=0$ or $1$ on either part of the imaginary axis, if we let $B \rightarrow 0^{+}$, $\alpha_{n} \rightarrow 0$.  Therefore we see that for sufficiently small $B$, with all other parameters fixed, the eigenvalues $\lambda_{\infty}(n) + \lambda_{p}(n)$ for $n=0$ or $1$ have negative Krein signature.  On the other hand, fixing all other parameters except $B$, if we allow the offset size $B$ to become arbitrarily large, then $\alpha_{n} \rightarrow 1$ and 
\beq
\lim_{B \rightarrow \infty }\left(n-\mu \right)^2-1 + \alpha_{n}^2\left(\left(n\pm2-\mu\right)^2-1\right) = 2(n-\mu \pm 1)^2. \label{lblim}
\eeq
Hence it is possible for eigenvalues to pass through the origin or switch Krein signature.  

Being more careful, we focus on the eigenvalues with potentially negative Krein signature, which are 
\beq
\lambda^{+}(1) = i\frac{k}{2}(2-\mu)\left(2k-\left(k^{2}(2-\mu)^{2}+4B\hat{r}_{2}\right)^{1/2} \right), \nonumber
\eeq 

\beq
\lambda^{-}(0) = i\frac{k}{2}(1+\mu)\left(-2k+\left(k^{2}(1+\mu)^{2}+4B\hat{r}_{-1}\right)^{1/2} \right), \nonumber
\eeq

\beq
\lambda^{-}(1) = i\frac{k\mu}{2}\left(-2k+\left(k^{2}\mu^{2}+4B\hat{r}_{0}\right)^{1/2} \right), \nonumber
\eeq
and
\beq
\lambda^{+}(0) = i\frac{k(1-\mu)}{2}\left(2k-\left(k^{2}(1-\mu)^{2}+4B\hat{r}_{1}\right)^{1/2} \right). \nonumber
\eeq

One sees that for given $k$ and $\epsilon$, one can find a sufficiently large value of $B$ such that none of these four eigenvalues pass through the origin for $\mu \in (0,1)$.  Noting that $\hat{r}_{n}=\hat{R}_{k,\epsilon}(n-\mu)>0$ by Hypothesis $H3^{'}$,  and since $\hat{R}_{k,\epsilon}(n-\mu)$ is continuous in $\mu$ (see Lemma \ref{propsaboutr}), we define $\tilde{r}_{n}=\min_{\mu\in[0,1]}\hat{R}_{k,\epsilon}(n-\mu)$.  We define the parameter $B^{\ast}$ to be 
\[
B^{\ast} = \max\left\{\frac{3k^{2}}{4\tilde{r}_{2}},\frac{3k^{2}}{4\tilde{r}_{-1}} ,\frac{k^{2}}{\tilde{r}_{0}},\frac{k^{2}}{\tilde{r}_{1}} \right\}.
\]
If $B > B^{\ast}$, then all four eigenvalues cannot pass through the origin for $\mu \in (0,1)$.  Setting $\mu = \frac{1}{2}$, one has from \eqref{lblim} that each of the four eigenvalues must have positive Krein signature for $B > B^{*}$.    
    
We now apply a theorem of \cite{hara} which states that  
\beq
k_{r} + k_{c} + k^{-}_{i} = n(L_{\mu}), 
\label{kreincondition}
\eeq
where $k_{r}$ is the number of eigenvalues of $JL_{\mu}$ on the positive real axis, $k_{c}$ is the number of eigenvalues with real part, $k^{-}_{i}$ is the number of imaginary eigenvalues with negative Krein signature, and $n(L_{\mu})$ is the number of negative eigenvalues of $L_{\mu}$.  In order to apply this theorem, one needs to show that the operator $JL_{\mu}$ satisfies Assumptions $2.1a-d$ in \cite{hara}.  Given that $JL_{\mu}=JL_{\mu,c} + 2JK$, where $JK$ is compact, and that the reciprocals of the eigenvalues of $JL_{\mu,c}$ are square summable, then showing all four assumptions hold for $JL_{\mu}$ is straightforward.  For $\mu =1/2$, $k_{r}=k_{c}=k^{-}_{i}=0$, and thus $n(L_{1/2})=0$.  Since the operator $L_{\mu}$ remains invertible for $\mu \in (0,1)$, which means no eigenvalue passes through the origin, then $n(L_{\mu})=0$ for $\mu \in (0,1)$.  This establishes that every eigenvalue of $JL_{\mu}$ has positive Krein signature.  
    
%%%%%%%%%%%%%%%%%%%%%%%%%%%%%%%%%%%%%%%%%%%%%%%%%%%%%%%%%%%%%%%%%%%%%%%%%%%%%%%%%%%%%%%%%%%%%%%%%%%%%
\subsection{Spectral and Orbital Stability for Small Potential Height}

As shown above, for $B$ sufficiently large, $V_{0}=0$, and $\mu \in (0,1)$, there are no eigenvalues of negative Krein signature, which by \eqref{kreincondition} implies that the operator $L_{\mu}$ is positive definite. Thus a standard perturbation argument guarantees that for small enough $V_{0}$, no eigenvalue of $L_{\mu}$ crosses through the origin, and thus we must have $n(L_{\mu}) = 0$.  Using \eqref{kreincondition} again shows that \eqref{sol} is spectrally stable for a given $\mu$ with sufficiently small potential height.  

In the case that $\mu = 0$, $V_{0} = 0$, one has by continuity of the spectrum with respect to the parameter $\mu$ that every eigenvalue of $JL_{0}$ must be on the imaginary axis.  However, for any value of $V_{0}$, there is an eigenvalue at the origin, with eigenvector 
\beq
\varphi_{nu} = \left(\ba{r}D\sin(x) \\ -\cos(x) \ea \right), \nonumber
\eeq
due to the phase symmetry which generates \eqref{sol}.  Thus \eqref{kreincondition} cannot be applied.  Likewise, there is a generalized eigenvector of $JL_{0}$ at the origin,
\beq 
\varphi_{gn} = \left(\ba{r} D\cos(x) \\ \sin(x) \ea \right). \nonumber
\eeq
Using the work above, one formally sees that the eigenvalues at the origin correspond to the eigenvalues $\lambda^{-}_{1}$ and $\lambda^{-}_{-1}$ colliding at the origin for $\mu=0$.  We now prove that the generalized kernel of $JL_{0}$ consists only of $\varphi_{mu}$ and $\varphi_{gn}$.  First define the projection operator (\cite{kato},Theorem 6.17) 
\[
P_{\mu} = \frac{1}{2\pi i}\oint_{\Gamma} \left(JL_{\mu} - \lambda \right)^{-1}d\lambda,
\]
where $\Gamma$ is a closed, bounded contour in the complex plane such that $\Gamma \cap \sigma(JL_{\mu})=\emptyset$ and the origin is inside $\Gamma$.  We further suppose that $\lambda^{-}_{1}$ and $\lambda^{-}_{-1}$ are the only eigenvalues of $JL_{\mu}$ inside $\Gamma$ for $\mu$ sufficiently small.  Since $JL_{0}$ has a compact resolvent, it has discrete eigenvalues that accumulate only at infinity.  Therefore, we can also choose $\Gamma$ such that $\Gamma \cap \sigma(JL_{0})=\emptyset$ and so that $\Gamma$ contains only a finite, counting multiplicity, number of the eigenvalues of $JL_{0}$.  Thus the projection $P_{0}$ is well-defined and finite-dimensional.  We then have 
\[
\gnorm{P_{\mu}-P_{0}}_{2,v} \leq \sup_{\lambda \in \Gamma} \gnorm{\left(JL_{\mu} - \lambda \right)^{-1} - \left(JL_{0} - \lambda \right)^{-1}}_{2,v}.
\]
Since $\gnorm{\left(JL_{\mu} - \lambda \right)^{-1} - \left(JL_{0} - \lambda \right)^{-1}}_{2,v}$ is continuous in $\lambda$, on $\Gamma$, which is compact, the supremum is attained.  We further restrict $\Gamma$ such that $\Gamma\cap \sigma(JL_{c,0})=\emptyset$.  Using Lemma \ref{convresolv} then gives
\[
\lim_{\mu \rightarrow 0^{+}} \gnorm{P_{\mu}-P_{0}}_{2,v} = 0.
\]  
Since $P_{\mu}$ and $P_{0}$ are projection operators, we then have (\cite{kato}, pg. 156) that 
\[
\mbox{dim}(\mbox{Ran}(P_{\mu}))=\mbox{dim}(\mbox{Ran}(P_{0})), 
\]
where $\mbox{Ran}(P_{\mu})$ denotes the range of $P_{\mu}$.  By construction $\mbox{dim}(\mbox{Ran}(P_{\mu}))=2$, and thus $\mbox{dim}(\mbox{Ran}(P_{0}))=2$.  The dimension of $\mbox{Ran}(P_{0})$ counts the algebraic multiplicity of an eigenvalue (see \cite{kato}, pg. 181), and so we see that the generalized kernel of $JL_{0}$ can only consist of $\varphi_{nu}$ and $\varphi_{gn}$.

Since $L_{0}$ is self adjoint and has a compact resolvent (see Lemma \ref{compresolv}), it cannot have a generalized eigenvalue at the origin for any $V_{0}$.  At $D=1$ ({\it i.e.} $V_{0}=0$), it is straightforward to show that 
\beq
L_{0}\varphi_{gn} = 2B \varphi_{gn}. \nonumber
\eeq
Thus, for $V_{0}=0$ and $B>0$, the operator $L_{0}$ has a simple eigenvalue at the origin and otherwise has only positive eigenvalues.  Since the eigenvector at the origin persists for any $V_{0}<0$, every nonzero eigenvalue of $L_{0}$ remains positive for small values of $V_{0}$.  This implies that  $n(L_{0})=0$ for $V_{0}<0$ and $|V_{0}|$ sufficiently small.  If $n(L_{0})=0$, one concludes spectral stability for small potential height by way of the following argument.  If $JL_{0}\varphi = \lambda \varphi$, $\lambda \neq 0$, then 
\beq
\left<L_{0}\varphi,\varphi \right> = -\lambda\left<J\varphi,\varphi \right>, \nonumber
\eeq
and $\left<L_{0}\varphi,\varphi \right> > 0$ since $\varphi$ is not in the kernel of $L_{0}$ by assumption.  $\left<J\varphi,\varphi \right>$ is strictly imaginary and nonzero.   Thus $\lambda$ is strictly imaginary.  We have now shown that the spectrum of $JL$ on $\mbox{D}(JL) = H^{2}_{2}(S_{2\pi n}) \subset L^{2}_{2}(S_{2\pi n})$, which is decomposed as
\beq
\sigma(JL) = \bigcup^{n-1}_{r=0} \sigma(JL_{r/n}), \nonumber
\eeq  
is strictly imaginary for small potential height $V_{0}$ since $\sigma(JL_{r/n})$ is strictly imaginary and there are a finite number of values $r$. 

As for orbital stability, again consider \eqref{sol} in the form $\psi(x,t) = \phi_{\omega}(x)e^{-i\omega t}$.  In other words, the solution is generated by the phase symmetry of the Hamiltonian problem \eqref{nlocgp} (see Section 2 for the explicit form of the Hamiltonian).  Returning to the original scaling whereby $V(x)$ is a $\frac{2\pi}{k}$ periodic function, with $\phi_{\omega}(x)$ a $\frac{2\pi}{k}$-periodic function, we pose the stability problem on $H^{2}_{1}(S_{T})$, where $T = \frac{2\pi n}{k}$, with $n\in \mathbb{N}$.  Thus we are working with \eqref{realandimagpart}. 

Then, again, we note that one has the conservation of the quantity 
\[
I(f,g) = \frac{1}{2}\int_{-\pi n/k}^{\pi n/k} \left(f^{2}(x) + g^{2}(x)\right)dx.
\]
If we denote the Hamiltonian as $\mathcal{H}(\psi_{r},\psi_{i})$, one has that $\phi_{\omega}=\phi_{\omega,r}+i\phi_{\omega,i}$ is a critical point of $E(f,g)$, where  
\beq
E(f,g) = \mathcal{H}(f,g) - \omega I(f,g). \nonumber
\eeq
One has $L=\mathcal{H}''(\phi_{\omega,r},\phi_{\omega,i})-\omega$, where the primes denote variational derivatives.  In the given scaling, $\mu \in [0,k)$, so that a perturbation of period $\frac{2 \pi n}{k}$ corresponds to $\mu = \frac{k}{n}$.  In this case, we showed that $L$ is positive semi-definite on $H^{2}_{2}(S_{T})\subset L^{2}_{2}(S_{T})$.  One can forgo the requirement that the function $d(\omega)=E(\phi_{\omega,r},\phi_{\omega,i})$ be convex (see \cite{gss}) and conclude orbital stability in the space $H_{1}(S_{T})$ by combining the stable two dimensional real solution into a complex function.

%%%%%%%%%%%%%%%%%%%%%%%%%%%%%%%%%%%%%%%%%%%%%%%%%%%%%%%%%%%%%%%%%%%%%%%%%%%%%%%%%%%%%%%%%%%%%%%%%%%%%%
\section{Spectral Instability for Small Offset Size}
In contrast to the approach above, we equate the offset size $B$ to zero, for given $\mu$ and $\epsilon$.  We obtain the linearization 
\beq
J\left( 
\ba{cc}
L^{+}_{\mu} & 0  \\
0 & L^{-}_{\mu}
\ea
\right) ,
\nonumber
\eeq
with
\beq
L^{+}_{\mu} = -\frac{k^{2}}{2}\left(\left(\p_{x}+i\mu\right)^{2}+1\right), \nonumber
\eeq 
and
\beq
L^{-}_{\mu} = -\frac{k^{2}}{2}\left(\left(\p_{x}+i\mu\right)^{2}+1\right) + 2A\sin(x)R_{k,\epsilon,\mu}\ast(\sin(x) \cdot). \nonumber
\eeq
We introduced the scaling $x\rightarrow kx$, so that $\mu\in[0,1)$.  The linearized problem is in ``canonical form" (see \cite{hara}), and one has the following theorem from \cite{hara}.  
\begin{thm}\cite{hara}
 Let $n(L^{+}_{\mu})$ and $n(L^{-}_{\mu})$ denote the number of negative eigenvalues of the given operators.  With $k_{r}$ the number of real eigenvalues of the canonical system, one has
\[
k_{r} \geq \left|n(L^{+}_{\mu}) - n(L^{-}_{\mu})  \right|.
\]
\end{thm}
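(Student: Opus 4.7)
The plan is to reduce the spectral problem for $J\mathcal{L}_{\mu}$, where $\mathcal{L}_{\mu}=\mathrm{diag}(L^{+}_{\mu},L^{-}_{\mu})$, to one for the product operator $L^{+}_{\mu}L^{-}_{\mu}$, and then to lower-bound the number of its negative eigenvalues by $|n(L^{+}_{\mu})-n(L^{-}_{\mu})|$ through an index / continuation argument. Writing $\Phi=(u,v)^{T}$, the eigenvalue equation $J\mathcal{L}_{\mu}\Phi=\lambda\Phi$ for $\lambda\neq 0$ decouples into the equivalent scalar problems $L^{+}_{\mu}L^{-}_{\mu} v = -\lambda^{2} v$ and $L^{-}_{\mu}L^{+}_{\mu} u = -\lambda^{2} u$, so that real non-zero eigenvalues of $J\mathcal{L}_{\mu}$ correspond exactly, counted with algebraic multiplicity, to strictly negative eigenvalues of the product. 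By the Hamiltonian $\pm$-symmetry of the spectrum, $k_{r}$ is then determined by this count. The compact resolvent argument used in Lemma \ref{compresolv} carries over to the product operator, so only discrete spectrum on the real line is in play.

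To establish the lower bound I would run a continuation argument. Introduce a homotopy $(L^{+}_{t},L^{-}_{t})$, $t\in[0,1]$, that interpolates between $(L^{+}_{\mu},L^{-}_{\mu})$ at $t=1$ and a simple reference pair at $t=0$; one natural choice is a pair of constant-coefficient Fourier multipliers diagonal in a common basis whose Morse indices match those of $(L^{+}_{\mu},L^{-}_{\mu})$. In the reference case the negative eigenvalues of $L^{+}_{0}L^{-}_{0}$ are visible by inspection, and their number equals $|n(L^{+}_{0})-n(L^{-}_{0})|$. Along the homotopy I would track the real eigenvalues of $J\mathcal{L}_{t}$ using the Krein-signature / Pontryagin-space structure: $J\mathcal{L}_{t}$ is self-adjoint with respect to the indefinite form $\langle\mathcal{L}_{t}\cdot,\cdot\rangle$, and in such a Pontryagin space of index $\kappa(t)=n(L^{+}_{t})+n(L^{-}_{t})$ real non-zero eigenvalues can only leave the real axis by colliding in $\pm$ pairs at the origin. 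Such a collision requires an eigenvalue of $\mathcal{L}_{t}$ itself to cross zero, which is exactly a change in $n(L^{+}_{t})$ or $n(L^{-}_{t})$. Bookkeeping the parity and multiplicity of these crossings, together with the Hamiltonian symmetry, forces the quantity $k_{r}(t)-|n(L^{+}_{t})-n(L^{-}_{t})|$ to remain non-negative along the homotopy, yielding the inequality at $t=1$.

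The main obstacle will be making the index bookkeeping rigorous in the infinite-dimensional periodic setting, specifically controlling (i) the possible formation of Jordan blocks at the origin where a real eigenvalue is about to bifurcate, and (ii) the possibility that a complex quadruplet of eigenvalues merges with the real axis and splits off real eigenvalues without any corresponding crossing in $\mathcal{L}_{t}$. Both can be ruled out by a careful use of the Krein-signature convention: real eigenvalues born off the imaginary axis always come with matched Krein signs, and the homotopy invariant that bounds them from below is precisely $|n(L^{+}_{t})-n(L^{-}_{t})|$. A cleaner but heavier alternative to the explicit homotopy would be to invoke the Krein–Langer spectral theorem for self-adjoint operators in $\Pi_{\kappa}$ directly; it packages the Kapitula–Kevrekidis–Sandstede equality $k_{r}+2k_{c}+2k_{i}^{-}=n(\mathcal{L}_{\mu})$ used earlier in the paper together with the complementary inequality $k_{r}\geq|n(L^{+}_{\mu})-n(L^{-}_{\mu})|$, once one verifies that $J\mathcal{L}_{\mu}$ fits the assumptions of that theorem — an essentially routine check given Lemmas \ref{propsaboutr} and \ref{compresolv}.
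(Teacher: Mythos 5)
You should first note that the paper contains no proof of this statement: the theorem is imported verbatim from \cite{hara} and used as a black box, so there is no internal argument to compare yours against. On its own terms, your opening reduction is correct --- for the canonical system $J\,\mathrm{diag}(L^{+}_{\mu},L^{-}_{\mu})$ one indeed gets $L^{+}_{\mu}L^{-}_{\mu}v=-\lambda^{2}v$, so nonzero real eigenvalues correspond to negative eigenvalues of the product --- and you are pointing at the right circle of ideas (Pontryagin/Krein index theory). But the sketch does not close.

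The genuine gap is in the bookkeeping that is supposed to make $k_{r}(t)-|n(L^{+}_{t})-n(L^{-}_{t})|$ nonnegative along the homotopy. Your claim that real nonzero eigenvalues ``can only leave the real axis by colliding in $\pm$ pairs at the origin'' is false: in a Hamiltonian system two positive real eigenvalues can coalesce at a \emph{nonzero} real point and depart as a complex quadruplet, with no eigenvalue of $\mathcal{L}_{t}$ crossing zero and hence no change in $n(L^{+}_{t})$ or $n(L^{-}_{t})$. Such an event lowers $k_{r}$ by $2$ while fixing $|n(L^{+}_{t})-n(L^{-}_{t})|$, so it destroys your proposed invariant unless you can show it occurs only when there is slack in the inequality --- and proving \emph{that} is precisely the substance of the theorem. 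The standard route is the complementary estimate $k_{c}+k_{i}^{-}\leq\min\{n(L^{+}_{\mu}),n(L^{-}_{\mu})\}$, which is special to canonical (block-diagonal) systems because the two components of the eigenfunctions attached to nonreal eigenvalues and to imaginary eigenvalues of negative signature supply matched negative directions for the quadratic forms of $L^{+}_{\mu}$ and of $L^{-}_{\mu}$ separately; combined with the counting identity $k_{r}+2k_{c}+2k_{i}^{-}=n(L^{+}_{\mu})+n(L^{-}_{\mu})$ (the canonical-system analogue of \eqref{kreincondition}) it gives $k_{r}\geq n(L^{+}_{\mu})+n(L^{-}_{\mu})-2\min\{n(L^{+}_{\mu}),n(L^{-}_{\mu})\}=|n(L^{+}_{\mu})-n(L^{-}_{\mu})|$ in two lines. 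In your write-up this complementary estimate is asserted (``real eigenvalues born off the imaginary axis always come with matched Krein signs'') rather than derived, and the Krein--Langer theorem you invoke as an alternative only delivers the counting identity, i.e.\ the upper-bound side, not the canonical-form lower bound. Until that inequality is actually proved, neither the homotopy nor the Pontryagin-space packaging yields the theorem.
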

For $A$ small or $\epsilon$ large, the problem is spectrally stable since the problem is a small perturbation of the constant coefficient case.  Further, one has $n(L^{+}_{\mu})=2$.  However, with $\epsilon = 0$, $L^{-}_{0}$ is a special case of Hill's equation \cite{mag}.  We can use the standard spectral theory for Hill's equation, from which the spectrum of $L^{-}$ ({\it i.e.} the $\mu$ independent operator) is in the bands $[\gamma_{0},\gamma'_{1}] \cup [\gamma'_{2},\gamma_{1}]\cup \cdots$, where $\gamma_{j}$ is an eigenvalue of $L^{-}_{0}$ and $\gamma'_{j}$ is an eigenvalue for $\mu=\frac{1}{2}$, with the eigenvalues for all other values of $\mu$ filling in the bands continuously.  If we can establish that $L^{-}_{0}$ is positive definite, the same must hold for $L^{-}_{\mu}$, and we will have then shown instability for all values of $\mu$.  

Therefore, setting $\mu, ~ \epsilon=0$, we examine the quadratic form $\left<L^{-}_{0} f,f \right>$.  Let 
\[
f(x) = \sum_{j=-\infty}^{\infty}\hat{f}_{j}e_{j}(x), 
\]
with $e_{j}(x)$ from \eqref{fs} where $T=2\pi$.  One has
\beq
\left<L^{-}_{0} f,f \right> = \sum_{j=-\infty}^{\infty}\frac{k^{2}(j^{2}-1)}{2}\left|\hat{f}_{j} \right|^{2} + \frac{A}{2}\left|\hat{f}_{j}-\hat{f}_{j+2}\right|^{2}.
\label{qfsum}
\eeq
There is one negative direction corresponding to the $j=0$ mode.  Thus, we let $f = a e_{0} + b g(x)$, where $|a|^{2}+|b|^{2}=1$, and $g(x)$ is orthogonal to $e_{0}(x)=\frac{1}{2\pi}$.  It is straightforward to show that
\[
L^{-}_{0}e_{0} = \frac{1}{2\pi}\left(\frac{-k^2}{2}+2A\sin^{2}(x) \right) = (-\frac{k^2}{2}+A)e_{0} - \frac{A}{\sqrt{2\pi}}(e_{2}+e_{-2}).
\]
Therefore,
\[
\left<L^{-}_{0} f, f \right> = |a|^{2}(-\frac{k^2}{2}+A) + |b|^{2}\left<L^{-}_{0}g,g \right> - \frac{2A}{\sqrt{2\pi}}\mbox{Re}\left(a^{\ast}b(\hat{g}_{2}+\hat{g}_{-2}) \right), 
\]
and 
\beq
\left<L^{-}_{0}f,f \right> \geq |a|^{2}(-\frac{k^2}{2}+A) + |b|^{2}\left<L^{-}_{0}g,g \right> - \frac{2A}{\sqrt{2\pi}}|a||b||\hat{g}_{2}+\hat{g}_{-2}|. \nonumber
\eeq
Assume $A\geq\frac{k^2}{2}$, and define $c = -\frac{k^2}{2}+A$.  We rewrite the above inequality as
\beq
\left<L^{-}_{0}f,f \right> \geq \left(|a|\sqrt{c} - \frac{A}{\sqrt{2\pi c}}|b||\hat{g}_{2}+\hat{g}_{-2}| \right)^{2} + |b|^{2}\left<L^{-}_{0}g,g \right> - \frac{A^{2}}{2\pi c}|b|^{2}|\hat{g}_{2}+\hat{g}_{-2}|^{2}, \nonumber
\eeq
which leads us to examine
\beq
\left<L^{-}_{0}g,g \right> - \frac{A^{2}}{2\pi c} |\hat{g}_{2}+\hat{g}_{-2}|^{2}.
\nonumber
\eeq
With $\hat{g}_{0}=0$, we take those terms in $\eqref{qfsum}$ that involve only terms in $\hat{g}_{2}$ and $\hat{g}_{-2}$, which reduces our efforts to analyzing 
\beq
\frac{3k^{2}+A}{2}\left(|\hat{g}_{-2}|^{2} + |\hat{g}_{2}|^{2}\right) - \frac{A^{2}}{2\pi c}|\hat{g}_{2}+\hat{g}_{-2}|^{2}.  \nonumber
\eeq  
Using Young's inequality, $|\hat{g}_{2}+\hat{g}_{-2}|^{2}\leq 2\left(|\hat{g}_{-2}|^{2} + |\hat{g}_{2}|^{2}\right)$.  It follows that if we can satisfy the inequality
\beq
\frac{3k^{2}+A}{2} - \frac{A^{2}}{\pi c} > 0,
\label{Acond}
\eeq
we prove that $\left<L^{-}_{0}g,g \right> > 0$, and the problem is unstable.  A straightforward computation shows that \eqref{Acond} is satisfied if
\beq
A \geq \frac{2\left(-1+\sqrt{4-\frac{6}{\pi}}\right)}{1-\frac{2}{\pi}}k^2  \approx 2.4533k^2 . \nonumber
\eeq
For the sake of presentation we write the instability condition as $A \geq 2.46 k^2$.  
%%%%%%%%%%%%%%%%%%%%%%%%%%%%%%%%%%%%%%%%%%%%%%%%%%%%%%%%%%%%%%%%%%%%%%%%%%%%%%%%%%%%%%%%%%%%%%%%%%%%%%
\section{Numerics}
In this section, we present numerical results applied to situations for which we expect our theoretical results to apply.  Then, after calibrating our numerics in this sense, we present numerical experiments that correspond to the work in \cite{decon}.  

For all the results shown, a filtered pseudo-spectral method \cite{guo} is used for the spatial variable, while MATLAB's ODE45 function was used for the integration in time.  The specific filtering function used is $\sigma(x)=e^{\alpha x^{2\gamma}}$, where $\alpha = \log(\mbox{eps})$, with eps denoting machine precision, and $\gamma = 4$.  Again see \cite{guo} for more details and analysis.  

For the figures in this paper, 128 modes on the domain $[0,8\pi]$ were used in the pseudo-spectral approximation; higher mode runs were tested and gave identical results to those using 128 modes.  In each figure, a perturbation of the form
\beq
\nu m(x)e^{i\theta(x)} \nonumber
\eeq   
was added to the initial condition $\phi_{\omega}(x)$.  The function $m(x)$ is a randomly generated, $8 \pi$ periodic function, normalized so that $\left|\left|m(x)\right|\right|_{2}=1$, while $\nu$ is typically $.01$.  However, in certain cases consider $\nu=.1$, and where this is the case, it is noted.  Finally, given the identities derived for the convolution in this paper, the convolution integral turns into a simple term by term multiplication of two vectors in the pseudo-spectral method.  Thus no approximations to the integral or kernel are made.  As in \cite{decon}, we convolve against $\phi(x) = e^{-x^{2}}$.  In every figure, $\alpha$ and $k$ are one.    
  
Figure \ref{fig:B_small} shows the results for $B=.01$, $V_{0}=-2.46$, with nonlocality parameter $\epsilon=0$.  As expected from Theorem 4, we see an instability emerge with these parameter values with the random perturbation to the initial condition as explained previously.  In contrast, Figure \ref{fig:B_big} shows the case $B=1$, $V_{0}=-.01$, with nonlocality parameter $\epsilon = .01$, $\nu = .01$.  The numerics behave as the Theorems 2 and 3 predict.  We have confidence that the numerical results are accurate and correspond to the existing theory.

Figures \ref{fig:B_big_nu_small} and \ref{fig:B_big_nu_big} show the results for the case $B=1$, $V_{0}=-1$, $\epsilon=.01$, $\nu = .01$ and $.1$.  This is a direct comparison to the work in \cite{decon}.  As can be seen from the figures, the underlying solution appears to be robust to perturbations, even with a nonzero nonlocality parameter.  This contradicts the results of \cite{decon}, and seems to imply that \eqref{sol} is stable in this parameter regime.    

\section{Conclusion}
We have shown that for a large class of kernels, $R(x,\epsilon)$, used to represent long range nonlocal interactions in a Gross-Pitaevskii equation, if one lets the range of nonlocality go to zero, {\it i.e.} $\epsilon \rightarrow 0$, then in a rigorous sense, the wavefunction for the nonlocal problem approaches the wavefunction for the local problem.  This result holds for any smooth periodic trapping potential $V(x)$.  Thus we have demonstrated that generalizing a local model to a nonlocal one can be done in a straightforward way, thus expanding the modeling potential  of Gross-Pitaevskii equations.  Likewise, we have established the stability properties of a particular class of solutions to a nonlocal Gross-Pitaevskii equation.  The theory and numerical experiments predict that when the offset size $B$ is large these solutions are stable.  It is therefore possible that under the right conditions these solutions could be observed as wavefunctions describing a Bose-Einstein condensate.  

\section*{Acknowledgments}
The author would like to thank M. Ablowitz, B. Deconinck, A. Rey, and H. Segur for reading through earlier versions of this manuscript and for their insightful advice and comments.  The author would especially like to thank the authors of \cite{decon}, B. Deconinck and J.N. Kutz, for their encouragement, interest, and endorsement of the results in this work which was expressed through private communications.  The  author finally would like to thank the reviewer for helping to make this a significantly better paper.   
%\pagebreak   
\begin{center}
\begin{figure}[!hp]
   \subfloat[$B=.01$, $V_{0}=-2.46$, $\epsilon=0$, $\nu=.01$]{\label{fig:B_small}\includegraphics[scale=.4]{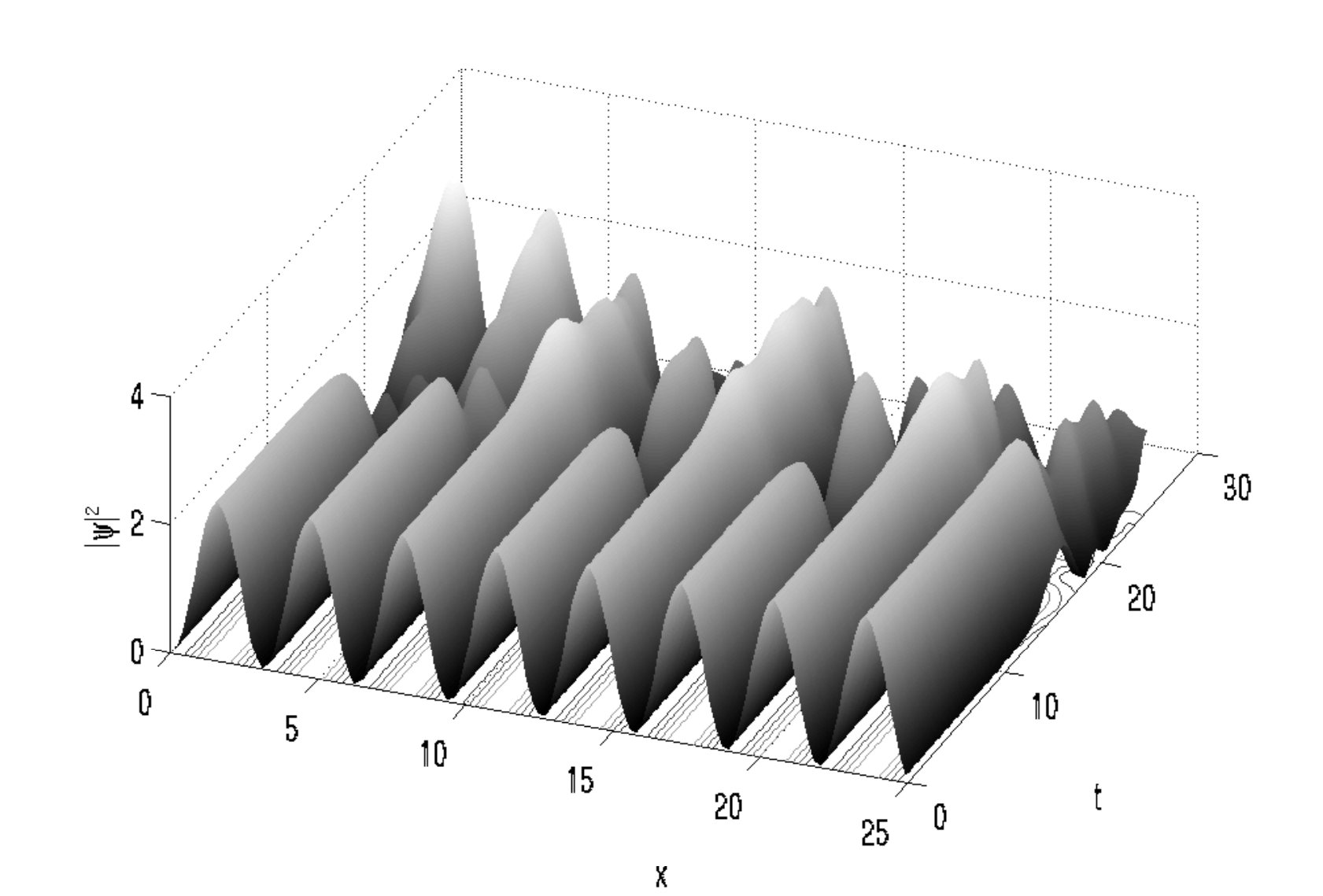}}
   \subfloat[$B=1$, $V_{0}=-.01$, $\epsilon=.01$, $\nu=.01$]{\label{fig:B_big}\includegraphics[scale=.4]{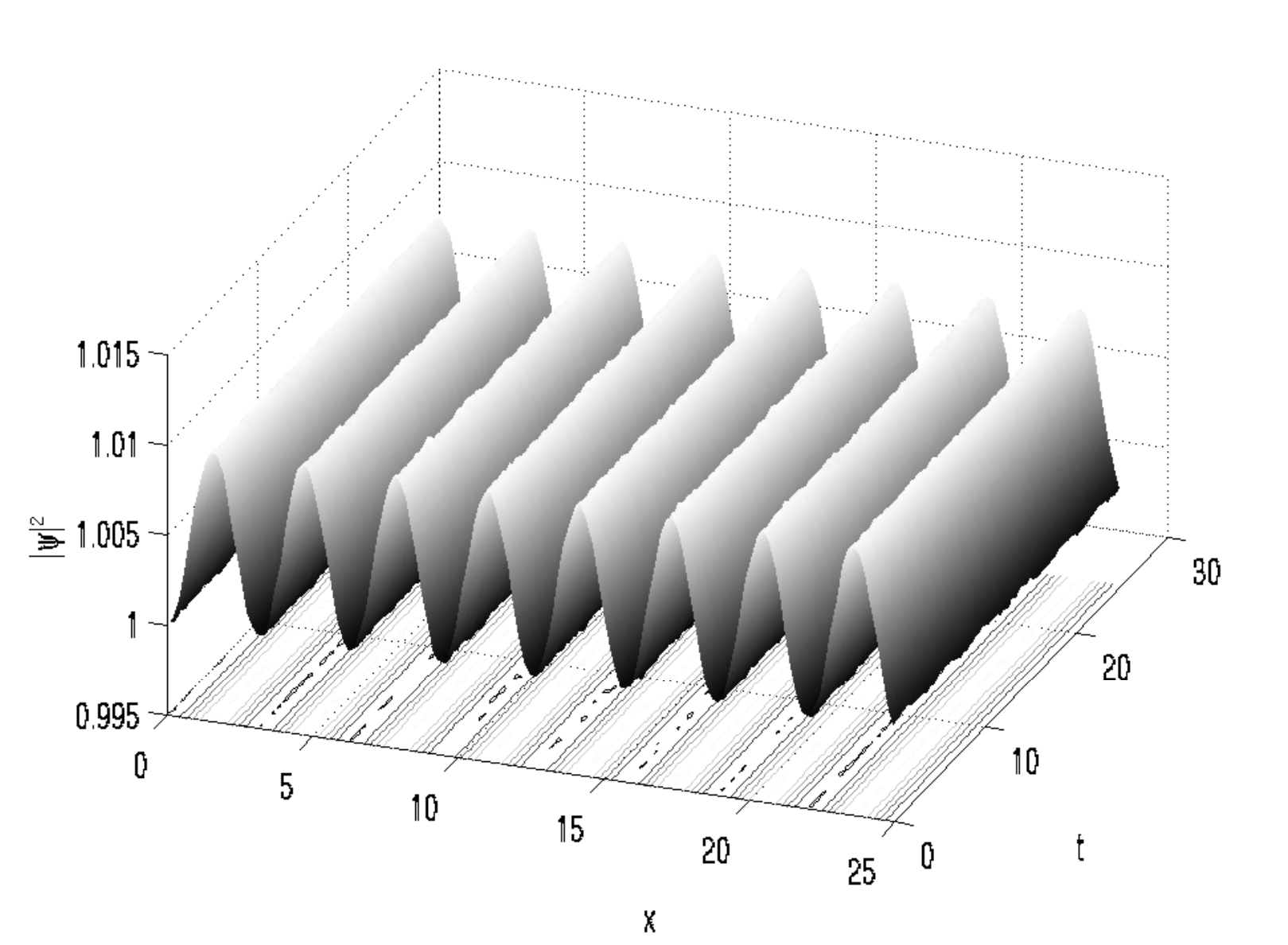}}
\caption{Confirmation of Theorems 2 and 3}
\end{figure}

\begin{figure}[!hp]
   \subfloat[$B=1$, $V_{0}=-1$, $\epsilon=.01$, $\nu=.01$]{\label{fig:B_big_nu_small}\includegraphics[scale=.4]{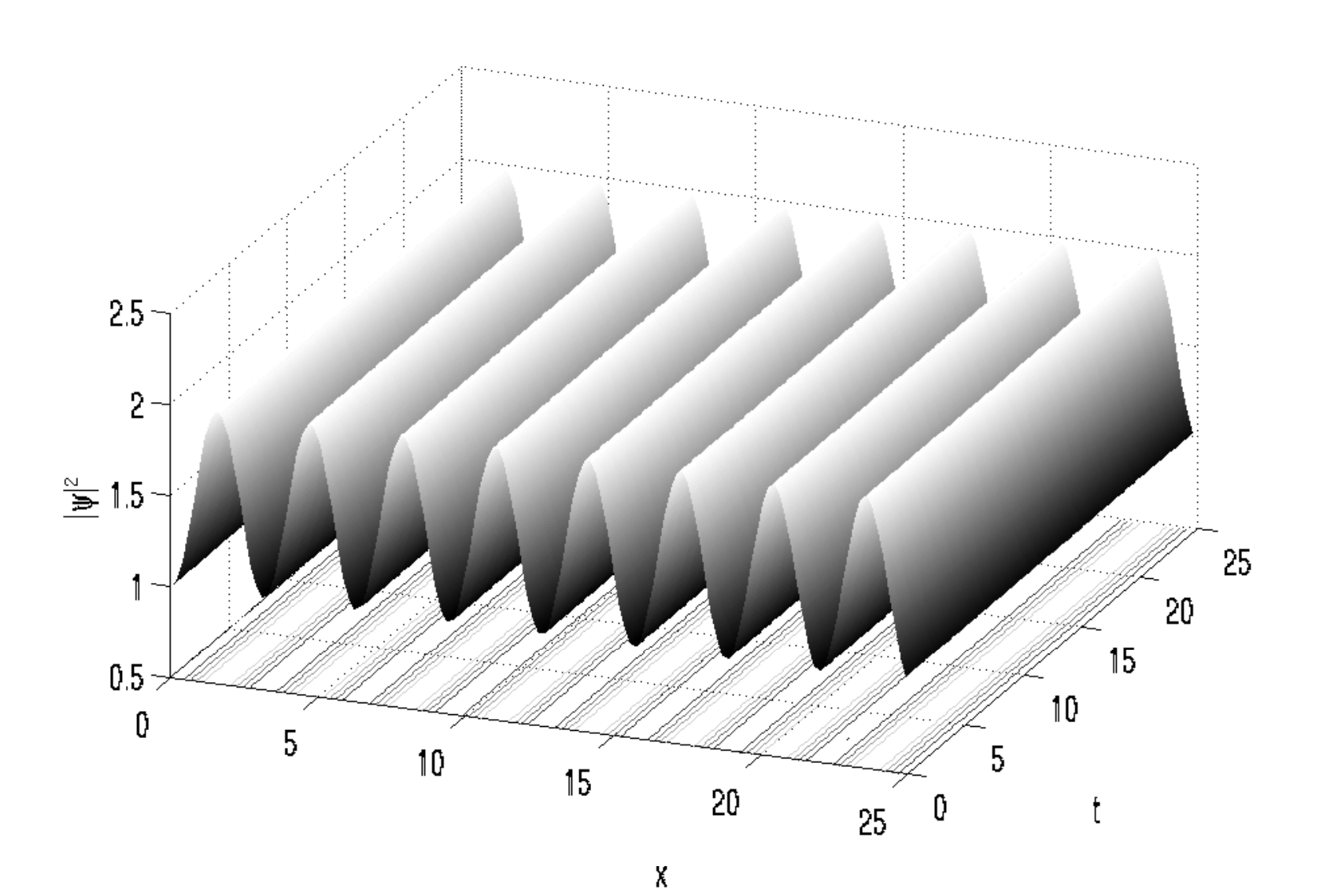}}
    \subfloat[$B=1$, $V_{0}=-1$, $\epsilon=.01$, $\nu=.1$]{\label{fig:B_big_nu_big}\includegraphics[scale=.4]{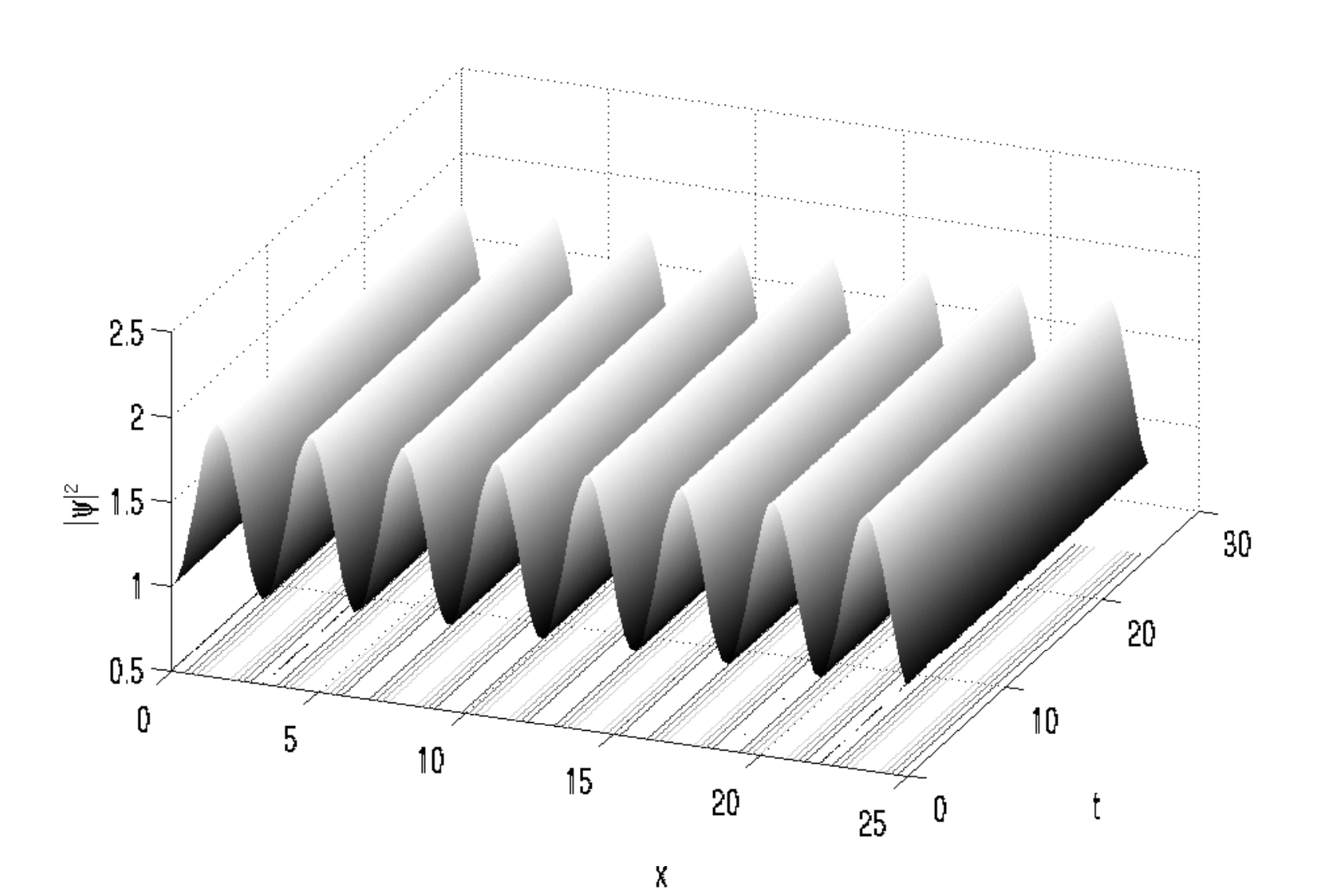}}
\caption{Numerical Predictions for Large Offset Size and Potential Height}
\end{figure}
\end{center}
\pagebreak

\end{document}